\tikzset{myboxgroup/.style={draw, densely dotted}} 
\newtheorem{lemma}{Lemma}[section]
\newtheorem{proposition}[lemma]{Proposition}
\newtheorem{theorem}[lemma]{Theorem}
\newtheorem{corollary}[lemma]{Corollary}
\theoremstyle{definition}
\newtheorem{definition}[lemma]{Definition}
\theoremstyle{remark}
\let\xx@thm\@thm
\crefname{section}{Section}{Sections}
\crefname{definition}{Definition}{Definitions}
\crefname{definitionnodiamond}{Definition}{Definitions}
\crefname{example}{Example}{Examples}
\crefname{examplenodiamond}{Example}{Examples}
\crefname{remark}{Remark}{Remarks}
\crefname{remarknodiamond}{Remark}{Remarks}
\crefname{convention}{Convention}{Conventions}
\crefname{notation}{Notation}{Notations}
\crefname{notationnodiamond}{Notation}{Notations}
\crefname{lemma}{Lemma}{Lemmas}
\crefname{proposition}{Proposition}{Propositions}
\crefname{corollary}{Corollary}{Corollaries}
\crefname{theorem}{Theorem}{Theorems}
\crefname{assumption}{Assumption}{Assumptions}
\crefname{enumi}{}{}
\crefname{equation}{}{}
\crefname{align}{}{}
\crefname{proofstep}{Step}{Steps}
\crefname{table}{Table}{Tables}
\renewcommand{\p@enumii}{}
\renewcommand{\p@enumiii}{}
\numberwithin{equation}{section}
\newenvironment{pf}{\proof}{\endproof}
\def\CC{{\mathbb C}}
\def\PP{{\mathbb P}}
\def\ZZ{{\mathbb Z}}
\newcommand{\bbZ}{\mathbb{Z}}
\newcommand{\bbC}{\mathbb{C}}
\newcommand{\bbP}{\mathbb{P}}
\def\0ol{{\bar 0}}
\def\1ol{{\bar 1}}
\def\2ol{{\bar 2}}
\def\ol2{{\bar 2}}
\def\3ol{{\bar 3}}
\def\4ol{{\bar 4}}
\def\5ol{{\bar 5}}
\def\6ol{{\bar 6}}
\def\7ol{{\bar 7}}
\def\8ol{{\bar 8}}
\def\9ol{{\bar 9}}
\def\bold0{{\bf 0}}
\def\bold1{{\bf 1}}
\def\bold2{{\bf 2}} 
\def\bold3{{\bf  3}}
\def\bold4{{\bf 4}}
\def\bold5{{\bf 5}}
\def\bold6{{\bf 6}}
\def\bold7{{\bf 7}}
\def\bold8{{\bf 8}}
\def\bold9{{\bf 9}}
\def\P2Skly{\PP^2_{Skly}}
\def\a{\alpha}
\def\b{\beta}
\def\d{\delta}
\def\l{\lambda}
\def\s{\sigma}
\def\D{\Delta}
\def\L{\Lambda}
\def\fa{{\mathfrak a}}
\def\sfi{{\sf i}}
\def\sfz{{\sf z}}
\def\cal{\mathcal}
\def\cL{{\cal L}}
\def\cO{{\cal O}}
\def\cS{{\cal S}}
\def\End{\operatorname {End}}
\def\Hom{\operatorname {Hom}}
\def\Aut{\operatorname{Aut}}
\def\id{\operatorname{id}}
\def\rank{\operatorname{rank}}
\def\Grass{\operatorname{Grass}}
\def\rel{\operatorname{rel}}
\def\Sym{\operatorname{Sym}}
\def\Alt{\operatorname{Alt}}
\def\im{\operatorname{im}}
\renewcommand{\Im}{\operatorname{Im}}
\renewcommand{\colon}{:}
\def\Gr{{\sf Gr}}
\def\Qcoh{{\sf Qcoh}}
\def\dirlim{\mathop{\vtop{\baselineskip -100pt\lineskip -1pt\lineskiplimit 0pt
\setbox0\hbox{lim}\copy0\hbox to \wd0{\rightarrowfill}}}\limits}
\def\invlim{\mathop{\vtop{\baselineskip -100pt\lineskip -1pt\lineskiplimit 0pt
\setbox0\hbox{lim}\copy0\hbox to \wd0{\leftarrowfill}}}\limits}
\def\I11{{1 \kern -0.8pt \! \mbox{l}}}
\def\mumu{{\mu\kern-4.2pt\mu}}
\def\bfmu{{\mu\kern-4.2pt\mu}}
\def\2slash{\backslash \! \backslash}
\def\l@subsection{\@tocline{2}{0pt}{2.75pc}{5pc}{}}
\begin{document}

\title[Elliptic algebras]{Feigin and Odesskii's elliptic algebras}

\author[Alex Chirvasitu]{Alex Chirvasitu}
\author[Ryo Kanda]{Ryo Kanda}
\author[S. Paul Smith]{S. Paul Smith}

\address[Alex Chirvasitu]{Department of Mathematics, University at
  Buffalo, Buffalo, NY 14260-2900, USA.}
\email{achirvas@buffalo.edu}

\address[Ryo Kanda]{Department of Mathematics, Graduate School of Science, Osaka City University, 3-3-138, Sugimoto, Sumiyoshi, Osaka, 558-8585, Japan.}
\email{ryo.kanda.math@gmail.com}

\address[S. Paul Smith]{Department of Mathematics, Box 354350,
  University of Washington, Seattle, WA 98195, USA.}
\email{smith@math.washington.edu}

\subjclass[2010]{14A22 (Primary), 16S38, 16W50, 17B37, 14H52 (Secondary)}

\keywords{Elliptic algebra; Sklyanin algebra; twist; theta functions}

\begin{abstract}
We study the elliptic algebras $Q_{n,k}(E,\tau)$ introduced by Feigin and Odesskii as a generalization of Sklyanin algebras. They form a family of quadratic algebras parametrized by coprime integers $n>k\geq 1$, an elliptic curve $E$, and a point $\tau\in E$. We consider and compare several different definitions of the algebras and provide proofs of various statements about them made by Feigin and Odesskii. For example, we show that $Q_{n,k}(E,0)$, and $Q_{n,n-1}(E,\tau)$ are polynomial rings on $n$ variables. We also show that $Q_{n,k}(E,\tau+\zeta)$ is a twist of $Q_{n,k}(E,\tau)$ when $\zeta$ is an $n$-torsion point. This paper is the first of several we are writing about the algebras $Q_{n,k}(E,\tau)$.
\end{abstract}

\maketitle

\tableofcontents

\section{Introduction}
\label{sec.Intro}

\subsection{Notation and conventions}
\label{sect.notn.and.conventions}

Throughout this paper we use the notation $e(z)=e^{2\pi i z}$ for $z\in\bbC$.

We fix relatively prime integers, $n>k\ge 1$, and write $k'$ for the unique integer such that $n >k' \ge 1$ and $kk'=1$ in $\ZZ_n=\ZZ/n\ZZ$.

We fix a point $\eta \in \CC$ lying in the upper half-plane, 
the lattice $\L=\ZZ+\ZZ\eta$, and the elliptic curve $E=\CC/\L$. We write $E[n]$ for the
$n$-torsion subgroup, $\frac{1}{n}\Lambda/\Lambda$, of $E$.

We always work over the field $\CC$ of complex numbers unless otherwise specified. For a complex algebraic variety $X$, $x\in X$ means $x$ is a \emph{closed} point of $X$.

\subsection{The algebras $Q_{n,k}(E,\tau)$}

In 1989, Feigin and Odesskii defined a family of graded $\CC$-algebras $Q_{n,k}(E,\tau)$ depending on the data $(n,k,E)$ and 
a point $\tau \in \CC-\frac{1}{n}\L$. The algebras appear first in their manuscript  \cite{FO-Kiev} 
archived with the Academy of Science of the Ukrainian SSR (which we refer to as ``the Kiev preprint'') and, almost simultaneously, in  their published paper \cite{FO89}. 
They defined $Q_{n,k}(E,\tau)$ to be the free algebra $\CC\langle x_0,\ldots,x_{n-1}\rangle$ 
modulo the $n^2$ homogeneous quadratic relations\footnote{The original definition uses $x_{k(j-r)}x_{k(i+r)}$ instead of $x_{j-r}x_{i+r}$; see \cref{sssec.Rij.def.k}.}
\begin{equation}
\label{the-relns-1}
	r_{ij}\;=\; r_{ij}(\tau) \; =\; \sum_{r \in \ZZ_n} \frac{\theta_{j-i+(k-1)r}(0)}{\theta_{j-i-r}(-\tau)\theta_{kr}(\tau)} \,\, x_{j-r}x_{i+r}	
\end{equation}
where the indices $i$ and $j$ belong to $\ZZ_n$ and
$\theta_0,\ldots,\theta_{n-1}$ are certain theta functions of order $n$, also indexed by $\ZZ_n$, that are quasi-periodic with respect to the lattice $\L$.  
The quasi-periodicity properties imply that if $\l \in \Lambda$, then $r_{ij}(\tau+\l)$ is a non-zero scalar multiple of $r_{ij}(\tau)$ 
whence $Q_{n,k}(E,\tau)$ depends only on the image of $\tau$ in $E$;  thus, for fixed $(n,k,E)$ the algebras provide a family parametrized by $E-E[n]$. 

When $\tau \in \frac{1}{n}\L$, $\theta_{kr}(\tau)=0$ for some $r$ so the relations no longer make sense. In 
\cref{subsec.rel.tor.pts} we will show how to define  $Q_{n,k}(E,\tau)$ for all $\tau \in \CC$ (\cref{def.Qnk}). 
Using that definition, \cref{prop.qnk.poly} shows that  $Q_{n,k}(E,0)$ is a polynomial ring on $n$ variables for all $n$ and $k$.

A lot is known about the algebras $Q_{n,1}(E,\tau)$. In \cite{TvdB96}, Tate and Van den Bergh showed that $Q_{n,1}(E,\tau)$
is a noetherian domain having the same Hilbert series and the same homological properties as the polynomial ring 
on $n$ variables. The algebras $Q_{3,1}(E,\tau)$ and $Q_{4,1}(E,\tau)$  are well understood due to the work of Artin-Tate-Van den Bergh (\cite{ATV1,ATV2}), Smith-Stafford \cite{SS92}, Levasseur-Smith \cite{LS93}, and Smith-Tate \cite{ST94}. 
For the most part though, the representation theory of $Q_{n,1}(E,\tau)$ remains a mystery when $n \ge 5$. 

Although the algebras $Q_{n,k}(E,\tau)$ were defined thirty years ago they have not been studied much since then 
(with the exception of the case $k=1$). The algebras $Q_{4,1}(E,\tau)$ were discovered by Sklyanin \cite{Skl82}  
almost 40 years ago when he was studying questions arising from quantum physics. We endorse
 a sentiment he expressed in that paper:
\begin{quote}
During our investigation it turned out that it is necessary to bring into the picture new algebraic structures, namely, the quadratic algebras of Poisson brackets and the quadratic generalization of the universal enveloping algebra of a Lie algebra. The theory of these mathematical objects is surprisingly reminiscent of the theory of Lie algebras, the difference being that it is more complicated. 
In our opinion, it deserves the greatest attention of mathematicians.
\end{quote}

In investigating the algebras $Q_{n,k}(E,\tau)$ one encounters an interesting mix of topics. A few examples:
\begin{itemize}
\item The origin of these algebras in the study of elliptic solutions of the quantum Yang-Baxter equation is evident in the appearance and prevalence of $R$-matrices with spectral parameter defining the relations of $Q_{n,k}(E,\tau)$.
\item Theta functions and the sometimes mysterious identities they satisfy pervade the subject.
\item When regarded as parametrized by $\tau$, the family $Q_{n,k}(E,\tau)$ ``integrates'' a natural Poisson structure on a moduli space of bundles on $E$ of rank $k$ and degree $n$ \cite{FO98,pl98}.
\item Understanding the point scheme for $Q_{n,k}(E,\tau)$ is heavily reliant on the intricacies of the theory of holomorphic bundles on abelian varieties.
\end{itemize}
We believe that this wide array of topics speaks to the depth of the subject and its richness as a source of problems, questions and perhaps answers. For that reason, we echo Sklyanin's opinion that the algebras $Q_{n,k}(E,\tau)$ deserve considerable attention.

\subsection{The contents of subsequent papers}
\label{sect.contents.other.papers}

This is the first of several papers in which we examine the algebras $Q_{n,k}(E,\tau)$. 
For the most part they can be read independently of one another.
One of them examines the characteristic variety $X_{n/k}$ for $Q_{n,k}(E,\tau)$, which is a subvariety of $\PP^{n-1}$.
Another will show that a certain quotient category of  graded $Q_{n,k}(E,\tau)$-modules contains 
a ``closed subcategory'' that is equivalent to 
$\Qcoh(X_{n/k})$, the category of quasi-coherent sheaves on $X_{n/k}$. This is proved  by exhibiting a homomorphism from
$Q_{n,k}(E,\tau)$ to a ``twisted homogeneous coordinate ring'' of $X_{n/k}$ (defined in \cite{AV90}).  
In many cases, $X_{n/k}$ is the $g$-fold product, $E^g$, of copies of $E$ where $g$ is the length of a certain continued fraction expression for the rational number $n/k$. For example, if $f_0=f_1=1$ and $f_{i+1}=f_i+f_{i-1}$ and $(n,k)=(f_{2g+1},f_{2g-1})$, then $X_{n/k} \cong E^g$. If $k=1$, then $g=1$ and $X_{n/1}\cong E$. 
If $n\geq 5$ and $k=2$, then $g=2$ and $X_{n/k}\cong S^2E$ the $2^{\rm nd}$ symmetric power of $E$. If $(n,k)=(n,n-1)$, then $g=n-1$ and $X_{n/k}\cong \PP^{n-1}$. 

It is stated in \cite[\S 3]{Od-survey} that, for generic $\tau\in E$, the  dimensions of the homogeneous components of 
$Q_{n,k}(E,\tau)$ are the same as those of the polynomial ring on $n$ variables, and it is conjectured that this is true for
 \emph{all} $\tau$.  When $k=1$, this was proved by Tate and Van den Bergh \cite{TvdB96}. In \cite{CKS4}, 
 we will show this is true for all $Q_{n,k}(E,\tau)$ when $\tau+\Lambda$ is not a torsion point in $E$.

\subsection{The contents of this paper}

The present paper is a prerequisite for our later papers.

In \cref{sec.ThetaFuncs}  (see \cref{official.theta_alpha}) 
 we specify a particular basis $\theta_0,\ldots,\theta_{n-1}$ for a space $\Theta_n(\L)$ of order-$n$ theta functions 
that are quasi-periodic with respect to $\L$. 
We use this basis in the rest of this paper and in our subsequent papers. 
Theta functions are notorious for the fact that notation for them varies considerably from one source to another.\footnote{Regarding the various notations for theta functions, the final paragraph of \cite[\S16.27]{handbook} provides this warning: 
``There is a bewildering variety of notations $\ldots$ so that in consulting books caution should be used''.}
Even when the same symbol appears in two different sources 
the reader must be alert to the possibility that the functions they denote are not the same. That is the case
in Feigin and Odesskii's various papers. For that reason, \cref{subsec.std.basis} makes a careful comparison 
of their various definitions and describes exactly how our $\theta_0,\ldots,\theta_{n-1}$  relate to their functions labeled by the same symbols. 
We then discuss the action of the Heisenberg group $H_n$ of order $n^3$ on $\Theta_n(\L)$ and the canonical morphism $E=\CC/\L \to \PP(\Theta_n(\L)^*)$ to the projective space of 1-dimensional subspaces of the dual space $\Theta_n(\L)^*$. 

In \cref{sect.Skl.defn.alg} we examine various definitions of $Q_{n,k}(E,\tau)$ and explain why they produce the same algebra.  
In \Cref{subsec.def.FO.alg} we compare different definitions given in terms of generators and relations.

In \cref{subsec.k=1.defns}, we focus on the case $k=1$. We use results of Feigin and Odesskii to give three alternative definitions of  
$Q_{n,1}(E,\tau)$ for {\it all}  $\tau \in \CC$. The first is based on their 
elliptic analogue of the usual shuffle product  for the symmetric algebra. The second, based on the theta function identity
$\text{\cref{42905872}}=\text{\cref{42905873}}$ in the proof of
 \Cref{prop.FO}, declares that $Q_{n,1}(E,\tau)$ is the algebra whose defining (quadratic) relations are the image 
an  explicit injective linear map $\Alt^2\Theta_{n}(\Lambda) \to \Theta_{n}(\Lambda)^{\otimes 2}$ where  $\Alt^2\Theta_{n}(\Lambda)$
denotes the space of anti-symmetric functions in $\Theta_{n}(\Lambda)^{\otimes 2}$. 
This is essentially the way Tate and Van den Bergh defined $Q_{n,1}(E,\tau)$ in \cite[(4.1)]{TvdB96} (see \cref{ssect.cf.TVdB}). The third, in \cref{ssect.geom.defn}, is of a geometric nature: the relations are defined as the subspace of 
$H^0(E \times E,\cL \boxtimes \cL)$, where $\cL$ is a certain invertible $\cO_E$-module of degree $n$, 
consisting of those sections $g$ such that 
$(g)_0$, its divisor of zeros, has certain symmetry properties. This definition allows one to define $Q_{n,1}(E,\tau)$ for arbitrary base fields
(see \cite{TvdB96}).

In \cref{subsec.rel.tor.pts}, we  define $Q_{n,k}(E,\tau)$ for all $\tau \in \CC$ and show that different definitions 
produce the same algebra under reasonable hypotheses.  To discuss this we define, for $\tau \in \CC- \frac{1}{n}\Lambda$, 
$$
\rel_{n,k}(E,\tau) \; := \; \operatorname{span}\{r_{ij}(\tau) \; | \; i,j \in \ZZ_n\},
$$
We examine three ways of defining $Q_{n,k}(E,\tau)$ for all $\tau \in \CC$.
\begin{enumerate}
\item 
(\cref{ssect.Lij})
If $r_{ij}(\tau) \ne 0$, let $L_{ij}(\tau)$ denote the point $\CC. r_{ij}(\tau)$ in $\PP\big( V^{\otimes 2} \big)$ and extend the holomorphic map 
$\CC-\frac{1}{n}\Lambda \to \PP\big( V^{\otimes 2} \big)$, $\tau \mapsto L_{ij}(\tau)$, 
 to $\CC \to \PP\big( V^{\otimes 2} \big)$  and define $L_{ij}(\tau)$ for all $\tau$ to be the image of $\tau$ under the extension; then define 
$$
\rel_{n,k}(E,\tau) \; :=\; \text{the linear span of all the $L_{ij}(\tau)$'s}.
$$
\item 
(\cref{ssect.R.tau.tau})
In \cref{eq:odr} we introduce, for $\tau \in \CC- \frac{1}{n}\Lambda$, a linear operator $R_\tau(\tau): V^{\otimes 2}  \to V^{\otimes 2}$ whose
image is $\operatorname{span}\{r_{ij}(\tau) \; | \; i,j \in \ZZ_n\}$; we then show that the holomorphic map 
$\tau \mapsto R_\tau(\tau)$ extends in a unique way to a holomorphic map $\CC \to \End_\CC (V^{\otimes 2})$;
 \cref{prop.relns.im.R.tau.tau} shows for all $\tau \in \CC$ that 
 $$
\text{the image of $R_\tau(\tau)$} \;=\;  \rel_{n,k}(E,\tau).
$$
\item 
(\cref{sect.subspaces})
In \cite{CKS4}, we will show  that $\dim\rel_{n,k}(E,\tau)=\binom{n}{2}$ for all $\tau \in \CC-\frac{1}{2n}\Lambda$;
the morphism $E-E[2n] \to \Grass  \big( \binom{n}{2},V^{\otimes 2} \big)$, $\tau \mapsto \rel_{n,k}(E,\tau)$, extends uniquely to a 
morphism $E\to \Grass\big( \binom{n}{2},V^{\otimes 2} \big)$;
one might then define $\rel_{n,k}(E,\tau)$ to be the image of $\tau+\Lambda$ under this extension.  
\end{enumerate}

In \cref{subsec.first.props}, we show that $Q_{n,k}(E,\tau) \cong Q_{n,k'}(E,\tau)$ where $k'$ is the unique integer such that $n >k' \ge 1$ and $kk'=1$ in $\bbZ_n$. Feigin and Odesskii state this but leave its proof to the reader. 
Feigin and Odesskii state several results without indicating how they might be proved. Some, like this isomorphism, 
are straightforward but we have had difficulty proving others. For that reason, and because the definition of the 
$\theta_\a$'s in one of their papers is not always the same as in others, we often provide more detail than strictly necessary.
The extra detail will provide a solid foundation for the future study of $Q_{n,k}(E,\tau)$.

For example, the statement that the only isomorphisms among the $Q_{n,k}(E,\tau)$'s
are those in the first sentence of the previous paragraph, \cite[\S 1, Rmk.~3]{FO89}, 
requires more precision because, for example, \cref{prop.n.minus.one} shows that 
 $Q_{n,n-1}(E,\tau)$ is a polynomial ring for all $\tau$. Furthermore,  
\cref{prop.anti.isom}  provides another isomorphism when $\tau$ is replaced by $-\tau$; indeed,
$Q_{n,k}(E,\tau) \cong Q_{n,k}(E,-\tau)=Q_{n,k}(E,\tau)^{\rm op}$. More isomorphisms appear in \cref{ssect.TVdB,ssect.more.isoms}.
We do not have a complete understanding
of all isomorphisms among the $Q_{n,k}(E,\tau)$'s.

In \cref{sec.twist.FO.alg} we show that $Q_{n,k}(E,\tau+\zeta)$  is isomorphic to a ``twist'' of $Q_{n,k}(E,\tau)$ for all $\zeta \in E[n]$.\footnote{The ``twist'' construction is quite general. Given any $\ZZ$-graded ring $A$ and a degree-preserving automorphism 
$\phi\colon A\to A$ the \textsf{twist} $A^{\phi}$ is the  graded vector space $A$ endowed with multiplication $a*b:= \phi^m(a)b$ when $b\in A_m$.
There is an equivalence $\Gr(A)\equiv \Gr(A^\phi)$ between their categories of graded left modules.}
The Heisenberg group $H_n$ acts as degree-preserving algebra automorphisms of $Q_{n,k}(E,\tau)$. There is a 
surjective homomorphism $H_n \to E[n]\cong\ZZ_n \times \ZZ_n$ and the twist just referred to is induced by any one of the 
automorphisms in $H_n$ that is a preimage of $\zeta$. Since $Q_{n,k}(E,0)$ is a polynomial ring on $n$ variables (\cref{prop.qnk.poly})
this confirms Feigin and Odesskii's statement \cite[\S1.2, Rmk.~1]{FO89}
that $Q_{n,k}(E,\zeta)$ is isomorphic to an algebra of ``skew polynomials'' though they don't define that term.

In \cref{sec.FO.alg.tors.pts}, we provide a proof of the assertion in \cite[\S1.2, Rmk.~1]{FO89} and \cite[\S3]{Od-survey} that 
$Q_{n,k}(E,0)$ is a polynomial ring on $n$ variables. 

In \Cref{sect.appx} we state and prove a lemma (a``standard'' result in complex analysis) that 
allows us to define what we mean by a theta function (in one variable) and establishes two fundamental results about such a function,
the number of its zeros in a fundamental parallelogram and the sum of those zeros. 
This lemma will also be used in our subsequent papers.

\subsection{Acknowledgements}

The authors are particularly grateful to Kevin De Laet for several useful conversations and for allowing us to include his result in
\cref{prop.new.rel}. \cref{prop.n.minus.one} and the observation in \cref{subsec.alt.basis} are also based on his work.

A.C. was partially supported through NSF grant DMS-1801011. 

R.K. was a JSPS Overseas Research Fellow, and supported by JSPS KAKENHI Grant Numbers JP16H06337, JP17K14164, and JP20K14288, Leading Initiative for Excellent Young Researchers, MEXT, Japan, and Osaka City University Advanced Mathematical Institute (MEXT Joint Usage/Research Center on Mathematics and Theoretical Physics JPMXP0619217849). R.K. would like to express his deep gratitude to Paul Smith for his hospitality as a host researcher during R.K.'s visit to the University of Washington.

\section{Theta functions in one variable}
\label{sec.ThetaFuncs}

In this section we collect some results on theta functions. 

The results are ``standard'' but we could not find a single source that states them in the way we need them;  for that reason
we have included them here. Proofs are given in more detail than strictly necessary because the calculations are often 
prone to error and the material will be new for some readers.

\subsection{The spaces $\Theta_n(\L)$ and the functions $\vartheta(z \, | \, \eta)$ and $\theta(z)$}
\label{ssect.theta.fns.one.var}

We fix an integer  $n \ge 1$ and a point $c \in \CC$.\footnote{Usually $n$ is  the integer fixed in \cref{sect.notn.and.conventions}
but we also allow $n=1$ here.}
We adopt the notation in Odesskii's survey article \cite[Appendix~A]{Od-survey},  and at \cite[p.~1025]{HP1}, and
write $\Theta_{n,c}(\L)$ for the set of holomorphic functions $f$ on $\CC$ satisfying the quasi-periodicity conditions
\begin{align*}
f(z+1) & \; = \; f(z),
\\
f(z+\eta) & \; = \; e\big(-nz+c+\tfrac{n}{2}\big) f(z).
\end{align*}
Functions in $\Theta_{n,c}(\L)$ are called {\sf theta functions of order $n$} with respect to the lattice $\L$.
They have $n$ zeros (always counted with multiplicity) in each fundamental parallelogram for $\Lambda$ and the sum
of those zeros is equal to $c$ modulo $\Lambda$ (see \Cref{sect.appx}).

\begin{proposition}
$\Theta_{n,c}(\L)$ is a vector space of dimension $n$.
\end{proposition}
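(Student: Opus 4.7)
The plan is to use the Fourier expansion coming from the first quasi-periodicity relation and read off the dimension from the recursion imposed by the second. First, since $f$ is entire and $f(z+1)=f(z)$, it admits a Fourier series $f(z)=\sum_{k\in\ZZ}a_{k}\,e(kz)$ that converges absolutely and uniformly on compact sets (after checking that the uniform $z$-translation-invariance forces rapid decay of coefficients in both directions). I would introduce this as Step~1.

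Next I would substitute the Fourier series into the second quasi-periodicity condition $f(z+\eta)=e(-nz+c+\tfrac{n}{2})f(z)$ and equate coefficients of $e(kz)$ on both sides. The left-hand side contributes $a_{k}e(k\eta)$ while the right-hand side contributes $a_{k+n}e(c+\tfrac{n}{2})$, yielding the recursion
\[
a_{k+n}\;=\;e\!\bigl(k\eta-c-\tfrac{n}{2}\bigr)\,a_{k}\qquad(k\in\ZZ).
\]
This recursion partitions the index set $\ZZ$ into the $n$ residue classes mod $n$, and on each class it prescribes every coefficient once one value is chosen. Thus the linear map $\Theta_{n,c}(\L)\to\CC^{n}$ given by $f\mapsto(a_{0},a_{1},\dots,a_{n-1})$ is injective, giving the upper bound $\dim\Theta_{n,c}(\L)\le n$.

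For surjectivity I would fix arbitrary $a_{0},\dots,a_{n-1}\in\CC$, extend to all $k\in\ZZ$ by the recursion, and verify that the resulting series converges to an entire function that in turn satisfies both quasi-periodicity relations (the second one holds by construction of the recursion). Iterating the recursion from a residue class representative $j\in\{0,\dots,n-1\}$ gives, with $q:=e(\eta)$,
\[
a_{j+mn}\;=\;q^{\,mj+n m(m-1)/2}\,e\!\bigl(-c-\tfrac{n}{2}\bigr)^{m}a_{j},
\]
valid for all $m\in\ZZ$. Since $\eta$ lies in the upper half-plane we have $|q|<1$, so $|a_{j+mn}|$ decays like $|q|^{nm^{2}/2}$ as $|m|\to\infty$, which dominates any exponential $|e(kz)|=e^{-2\pi k\,\mathrm{Im}\,z}$ on a horizontal strip. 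Hence the series converges to an entire function, establishing the reverse bound.

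The only step requiring genuine care is the convergence check in the third paragraph: one must make the quadratic-in-$m$ estimate explicit to beat the exponential growth of $|e(kz)|$ uniformly on compact subsets of $\CC$. Everything else is formal manipulation of Fourier coefficients.
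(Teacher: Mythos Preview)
Your proposal is correct and is precisely the argument the paper has in mind: the paper's proof simply reads ``This follows from the Fourier expansions for elements in $\Theta_{n,c}(\L)$'' and cites Mumford's \emph{Tata Lectures on Theta~I}, \S I.1, which carries out exactly the Fourier-coefficient recursion you wrote down. Your recursion $a_{k+n}=e(k\eta-c-\tfrac{n}{2})a_k$ and the Gaussian-decay estimate from $|q|<1$ are the standard steps, so there is nothing to add.
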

\begin{proof}
This follows from the Fourier expansions for elements in $\Theta_{n,c}(\L)$. See \cite[I.\S 1]{Mum07}, for example.
\end{proof}

In keeping with the notation in the Kiev preprint \cite[p.~32]{FO-Kiev}, and in the first Odesskii-Feigin paper \cite[\S 1.1]{FO89}, we will always 
use the notation
\begin{equation*}
	\Theta_{n}(\L) \;:=\; \Theta_{n,\frac{n-1}{2}}(\L).
\end{equation*}
When $c=\frac{n-1}{2}$ the second quasi-periodicity condition becomes $f(z+\eta)=-e(-nz)f(z)$.

\subsubsection{}
\label{sssec.riem.theta}

All theta functions in this paper will be defined in terms of the holomorphic functions
$$
\vartheta(z\, | \, \eta)  \;:=\;  \sum_{n \in \ZZ}  e\big(nz + \tfrac{1}{2}n^2\eta\big)
$$
and $\theta(z) =\vartheta(z-\frac{1}{2}-\frac{1}{2}\eta \, | \, \eta)$ in \cref{eq.basic.theta.fn}. Both $\vartheta$ and $\theta$ 
have order one, meaning they have a single zero in each 
fundamental parallelogram. The Fourier expansion for $\theta(z)$ is given by \cref{eq.basic.theta.fn}. 

\begin{lemma}\label{lem.riemann.theta.prop}
The function 
\begin{equation}
\label{eq.basic.theta.fn}
\theta(z )  \; :=\;  \sum_{n \in \ZZ} (-1)^n e\big(nz + \tfrac{1}{2}n(n-1)\eta\big)
\end{equation}
has the following properties:
\begin{enumerate}
	\item\label{item.lem.riemann.theta.basis}
	it is a basis for $\Theta_{1,0}(\L)$;
	\item\label{item.lem.riemann.theta.per}
	$\theta(z+1)=\theta(z)$ and $\theta(z+\eta)=-e(-z)\theta(z)$;
	\item\label{item.lem.riemann.theta.minus}
	$\theta(-z)=-e(-z)\theta(z)$;
	\item\label{item.lem.riemann.theta.zeros}
	$\theta(z)=0$ if and only if $z\in\Lambda$. Each zero has order $1$.
\end{enumerate}
\end{lemma}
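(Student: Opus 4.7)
The plan is to verify the four assertions essentially in the order \text{(2)} $\Rightarrow$ \text{(1)} $\Rightarrow$ \text{(3)} $\Rightarrow$ \text{(4)}, since each subsequent item will either use the previous ones or at least benefit from having them available.

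First I would establish \cref{item.lem.riemann.theta.per} directly from the Fourier expansion in \cref{eq.basic.theta.fn}. The identity $\theta(z+1)=\theta(z)$ is immediate because $e(n)=1$ for $n\in\ZZ$. For the second periodicity, I substitute $z\mapsto z+\eta$ and then reindex $m=n+1$: a short calculation gives
\[
\theta(z+\eta)\;=\;\sum_{n\in\ZZ}(-1)^{n}e\bigl(nz+\tfrac{n(n+1)}{2}\eta\bigr)\;=\;-e(-z)\sum_{m\in\ZZ}(-1)^{m}e\bigl(mz+\tfrac{m(m-1)}{2}\eta\bigr)\;=\;-e(-z)\theta(z).
\]
For \cref{item.lem.riemann.theta.basis}, these two identities say exactly that $\theta\in\Theta_{1,0}(\Lambda)$, because the defining relation for $\Theta_{n,c}$ with $n=1$ and $c=0$ reads $f(z+\eta)=e(-z+\tfrac{1}{2})f(z)=-e(-z)f(z)$. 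The $n=1$ term in \cref{eq.basic.theta.fn} contributes $-e(z)$, so $\theta$ is not identically zero; since $\dim\Theta_{1,0}(\Lambda)=1$ by the previous proposition, $\theta$ is a basis.

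Next I would prove \cref{item.lem.riemann.theta.minus} by the same kind of reindexing. Setting $m=1-n$ in the series for $\theta(-z)$ yields
\[
\theta(-z)\;=\;\sum_{n\in\ZZ}(-1)^{n}e\bigl(-nz+\tfrac{n(n-1)}{2}\eta\bigr)\;=\;-e(-z)\sum_{m\in\ZZ}(-1)^{m}e\bigl(mz+\tfrac{m(m-1)}{2}\eta\bigr)\;=\;-e(-z)\theta(z).
\]

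Finally, for \cref{item.lem.riemann.theta.zeros} I would invoke the lemma from \cref{sect.appx}: any nonzero element of $\Theta_{n,c}(\Lambda)$ has exactly $n$ zeros (with multiplicity) in a fundamental parallelogram and the sum of those zeros equals $c$ modulo $\Lambda$. Applied to $\theta\in\Theta_{1,0}(\Lambda)$, this says $\theta$ has a single zero per fundamental parallelogram, and that zero lies in $\Lambda$. Specializing \cref{item.lem.riemann.theta.minus} at $z=0$ gives $\theta(0)=-\theta(0)$, so $\theta(0)=0$, forcing the unique zero modulo $\Lambda$ to be $0$ and hence the full zero set to be $\Lambda$. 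Because only one zero per parallelogram is available, it must be simple.

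The only mild obstacle is bookkeeping for the index shifts in \cref{item.lem.riemann.theta.per,item.lem.riemann.theta.minus}, where it is easy to drop a sign or misplace a factor of $e(\eta)$; once those are carried out cleanly the rest is essentially formal, with \cref{item.lem.riemann.theta.zeros} depending only on the zero-count result cited from the appendix.
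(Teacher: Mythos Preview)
Your proof is correct. Part \cref{item.lem.riemann.theta.minus} is handled exactly as in the paper (the reindexing $m=1-n$ is the same as the paper's $m=-n+1$), but for the other three parts you take a different route: the paper simply cites Mumford's \emph{Tata Lectures} for \cref{item.lem.riemann.theta.basis}, \cref{item.lem.riemann.theta.per}, and \cref{item.lem.riemann.theta.zeros}, whereas you verify \cref{item.lem.riemann.theta.per} by a direct Fourier-series reindexing, deduce \cref{item.lem.riemann.theta.basis} from \cref{item.lem.riemann.theta.per} together with the nonvanishing of a Fourier coefficient and the one-dimensionality of $\Theta_{1,0}(\Lambda)$, and obtain \cref{item.lem.riemann.theta.zeros} from the zero-count/zero-sum statement in \cref{sect.appx}. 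Your approach is more self-contained---it stays entirely within the paper's own toolkit---while the paper's approach is shorter at the cost of an external reference; note that your argument for \cref{item.lem.riemann.theta.zeros} does not actually need the verification $\theta(0)=0$, since having a single zero whose sum is $0$ modulo $\Lambda$ already forces that zero to lie in $\Lambda$.
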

\begin{proof}
Statement \cref{item.lem.riemann.theta.basis}, and hence \cref{item.lem.riemann.theta.per}, follows from the fact that 
$\vartheta(z \, | \,\eta)$ is a basis for $\Theta_{1,-\frac{1}{2}-\frac{1}{2}\eta}(\Lambda)$, which can be found in \cite[\S I.1]{Mum07}.

It follows from the definition of $\theta$ that 
\begin{align*}
\theta(-z) & \;=\; \sum_{n \in \ZZ} (-1)^n e(-nz + \tfrac{1}{2}n(n-1)\eta)
\\
& \;=\;  \sum_n (-1)^{-n} e(-nz + \tfrac{1}{2}(-n)(-n+1)\eta)
\\
& \;=\;  \sum_m (-1)^{m-1} e(mz -z + \tfrac{1}{2}(m-1)m\eta)  \qquad \text{(after setting $m=-n+1$)}
\\
& \;=\;  -\, e(-z)\sum_m (-1)^{m} e(mz + \tfrac{1}{2}m(m-1)\eta) 
\\
& \;=\;  -\, e(-z)\theta(z)
\end{align*}
as claimed in \cref{item.lem.riemann.theta.minus}.

Statement \cref{item.lem.riemann.theta.zeros} follows from \cite[Lem.~4.1]{Mum07}: it is shown there that the zeros of 
$\vartheta_{00}(z)=\vartheta(z \, | \, \eta)$ are the points in $\frac{1}{2}+\frac{1}{2}\eta+\Lambda$ and those zeros have order $1$. 
Thus the zeros of $\theta(z)=\vartheta(z-\frac{1}{2}-\frac{1}{2}\eta \, | \,\eta)$ are the points in $\Lambda$ and they too have order $1$.
\end{proof}

\subsubsection{Remarks}
\label{rmk.prod.theta.fns}
Assume $c,d \in \CC$, $r\in\ZZ$, $f \in \Theta_{n,c}(\Lambda)$, and $f_i \in \Theta_{n_i,c_i}(\Lambda)$ for $i=1,2$.
\begin{enumerate}
\item
$\Theta_{n,c+r}(\Lambda)=\Theta_{n,c}(\Lambda)$. $\phantom{\big\vert}$
\item
The function $z\mapsto f_1(z)f_2(z)$ belongs to $\Theta_{n_1+n_2,c_1+c_2}(\Lambda)$. $\phantom{\big\vert}$
\item
The function $z \mapsto f(z+d)$ belongs to $\Theta_{n,c-nd}(\Lambda)$. $\phantom{\big\vert}$
\item
$\vartheta(z \, | \, \eta) \in \Theta_{1,\frac{1}{2}(1+\eta)}(z)$. $\phantom{\big\vert}$
\item
The function $z \mapsto f(rz)$ belongs to $\Theta_{r^{2}n,\,rc+\frac{r(1-r)n}{2}\eta}(\Lambda)$. $\phantom{\big\vert}$
\end{enumerate}

\subsection{The standard basis for $\Theta_n(\L)$}
\label{subsec.std.basis}

In their various papers Feigin and Odesskii use a basis for $\Theta_{n,c}(\L)$ that is labeled $\theta_0,\ldots,\theta_{n-1}$.
The functions they call $\theta_\a$ in one paper are not always the same as those called  $\theta_\a$ in
another paper.
Nevertheless, in \cite{FO-Kiev,FO89,Od-survey} the zeros of $\theta_\a$ always belong to
\begin{equation*}
	\big\{  \tfrac{1}{n}(-\a\eta +m) \; | \; 0 \le m \le n-1\big \} \, + \, \L \;=\; -\, \tfrac{\a}{n}\eta + \tfrac{1}{n}\ZZ + \ZZ\eta.
\end{equation*} 
In particular, $\theta_\a$ has $n$ distinct zeros in the fundamental parallelogram
\begin{equation*}
	[0,1)+(-1,0]\eta\;=\;\{a+b\eta\;|\;0\leq a<1,\ -1<b\leq 0\},
\end{equation*}
each zero having multiplicity 1. 
Furthermore, their $\theta_\a$'s, $\alpha\in\ZZ_{n}$, always have the properties
\begin{align*}
\theta_\a(z+\tfrac{1}{n}) & \; = \; e\big( \tfrac{\a}{n}\big)  \theta_\a(z),
\\
\theta_\a(z+\tfrac{1}{n}\eta)  &  \; = \; C e(-z) \theta_{\a+1}(z), 
\end{align*}
where $C$ is a non-zero constant independent of $\a$. 

Since $\theta$ has a unique zero  in the fundamental parallelogram, namely a simple zero at $z=0$,
the function 
\begin{equation}
\label{theta.alpha}
\theta\!\left(z +\tfrac{\a}{n}\eta\right)  \theta\!\left(z +\tfrac{1}{n}+ \tfrac{\a}{n}\eta\right)
\cdots \theta\!\left(z +\tfrac{n-1}{n}+ \tfrac{\a}{n}\eta\right)
\end{equation}
has exactly $n$ zeros in the fundamental parallelogram, namely $\{\frac{1}{n}(-\a\eta +m) \; | \; 0 \le m \le n-1\}$, 
each of which has order one. 
Thus, Feigin and Odesskii's functions $\theta_\a$, $\a \in \ZZ_n$, are multiples of the functions in \cref{theta.alpha}
by nowhere vanishing holomorphic functions.

\begin{lemma}\label{defn.R.theta.prop}
For each $\a \in \ZZ$ let $[\a] \in \CC$ be an arbitrary complex number.\footnote{Later we will make a judicious choice of $[\a]$. See \cref{official.theta_alpha} for the ``standard'' definition.}
The functions
\begin{equation}
\label{defn.R.theta.fn}
\theta_\a(z) \; :=\; e(\a z +[\a])  \prod_{m=0}^{n-1}  \theta\!\left(z +\tfrac{m}{n}+ \tfrac{\a}{n}\eta\right),
\end{equation}
indexed by $\alpha\in\ZZ$, have the following properties:
\begin{enumerate}
\item\label{item.defn.R.theta.fn.sp}
$\theta_\a \in \Theta_{n}(\L)$,
\item\label{item.defn.R.theta.fn.qp}
$\theta_\a(z+1) \; = \;  \theta_\a(z)$ and $\theta_\a(z+\eta) \; = \;  - e(-nz) \theta_\a(z)$,
\item\label{item.defn.R.theta.fn.S}
$\theta_\a(z+\tfrac{1}{n})  \; = \; e\big(\tfrac{\a}{n}\big) \theta_\a(z)$, 
\item\label{defn.R.theta.fn.per.sec}
$\theta_\a(z+\tfrac{1}{n}\eta)  \; = \;  e\big(\tfrac{\a}{n}\eta  +[\a]   -[\a+1]\big) e(-z) \theta_{\a+1}(z)$, 
\item\label{defn.R.theta.fn.neg}
$\theta_\a(-z)  \; = \;  -  e\big(-nz+\a\eta+[\a]-[-\a]\big) \theta_{-\a}(z)$, and 
\item\label{item.defn.R.theta.fn.wd}
$\theta_{\a+n}(z)  \; = \;    -e([\a+n]-[\a]-\a\eta)\theta_\a(z)$.
\end{enumerate}
\end{lemma}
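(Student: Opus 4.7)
The plan is to verify each of the six claims by direct calculation from the definition \cref{defn.R.theta.fn}, using only the quasi-periodicity and parity identities for $\theta(z)$ recorded in \cref{lem.riemann.theta.prop}, together with elementary manipulations of the exponential $e(\cdot)$. In every case the strategy is the same: apply a one-line transformation law to each of the $n$ factors in the product, collect the resulting prefactors, and match them against the analogous prefactor in $\theta_\a$, $\theta_{\a+1}$, $\theta_{-\a}$, or $\theta_{\a+n}$, as appropriate.

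For \cref{item.defn.R.theta.fn.qp}, I would first apply $\theta(w+1)=\theta(w)$ termwise in the product to obtain $\theta_\a(z+1)=\theta_\a(z)$ (the prefactor $e(\a z)$ is invariant because $\a\in\ZZ$). For the shift by $\eta$, I would substitute $w=z+\frac{m}{n}+\frac{\a}{n}\eta$ into $\theta(w+\eta)=-e(-w)\theta(w)$ and take the product over $m=0,\ldots,n-1$; the sum $\sum_{m=0}^{n-1}\bigl(z+\frac{m}{n}+\frac{\a}{n}\eta\bigr)=nz+\frac{n-1}{2}+\a\eta$ produces a factor $(-1)^n e(-nz-\tfrac{n-1}{2}-\a\eta)$, and the $\a\eta$ piece cancels against $e(\a(z+\eta))/e(\a z)=e(\a\eta)$. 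Statement \cref{item.defn.R.theta.fn.sp} is then immediate from $\Theta_n(\L)=\Theta_{n,\frac{n-1}{2}}(\L)$ and the translation laws just derived. Claim \cref{item.defn.R.theta.fn.S} is the cleanest: shifting $z$ by $1/n$ permutes the residues $\{m/n\bmod 1\}$ appearing in the product, so by $\theta(w+1)=\theta(w)$ the product is unchanged and only the prefactor contributes $e(\a/n)$.

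For \cref{defn.R.theta.fn.per.sec}, the shift $z\mapsto z+\frac{\eta}{n}$ converts $\frac{\a}{n}\eta$ into $\frac{\a+1}{n}\eta$ inside each factor, so the product becomes exactly the product that defines $\theta_{\a+1}(z)$; matching the two prefactors yields the stated constant $e\bigl(\tfrac{\a}{n}\eta+[\a]-[\a+1]\bigr)e(-z)$. For \cref{item.defn.R.theta.fn.wd}, the shift $\a\mapsto \a+n$ adds $\eta$ to the argument of each factor, so I would again invoke $\theta(w+\eta)=-e(-w)\theta(w)$ termwise, collect the resulting factor $(-1)^n e(-nz-\tfrac{n-1}{2}-\a\eta)$, and observe that $e(-nz)$ is absorbed by the change $e((\a+n)z)/e(\a z)=e(nz)$. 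For \cref{defn.R.theta.fn.neg}, I would apply $\theta(-w)=-e(-w)\theta(w)$ to each factor and then use periodicity $\theta(w-1)=\theta(w)$ (equivalently, reindex $m\mapsto n-m$) to rewrite $\prod_m\theta\bigl(z-\tfrac{m}{n}-\tfrac{\a}{n}\eta\bigr)$ as $\prod_m\theta\bigl(z+\tfrac{m}{n}-\tfrac{\a}{n}\eta\bigr)$, which is $\theta_{-\a}(z)$ up to its prefactor; collecting the remaining exponentials gives the claim.

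The only arithmetic subtlety, which arises identically in \cref{item.defn.R.theta.fn.qp}, \cref{defn.R.theta.fn.neg}, and \cref{item.defn.R.theta.fn.wd}, is the identity $(-1)^n e\bigl(\pm\tfrac{n-1}{2}\bigr)=-1$. This must be checked by splitting into the parities of $n$: if $n$ is odd then $\frac{n-1}{2}\in\ZZ$ so $e\bigl(\pm\tfrac{n-1}{2}\bigr)=1$ while $(-1)^n=-1$, and if $n$ is even then $e\bigl(\pm\tfrac{n-1}{2}\bigr)=e\bigl(\mp\tfrac12\bigr)=-1$ while $(-1)^n=1$; either way the product is $-1$. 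This small parity bookkeeping is the only non-mechanical part of the proof — it is the reason the sign $-1$ appears in the second quasi-periodicity law of \cref{item.defn.R.theta.fn.qp} and in the coefficients of \cref{defn.R.theta.fn.neg} and \cref{item.defn.R.theta.fn.wd}. Everything else reduces to telescoping the defining product.
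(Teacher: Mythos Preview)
Your proposal is correct and follows essentially the same approach as the paper: direct termwise application of the identities $\theta(w+1)=\theta(w)$, $\theta(w+\eta)=-e(-w)\theta(w)$, and $\theta(-w)=-e(-w)\theta(w)$ to the defining product, followed by collecting exponential prefactors. The only cosmetic difference is that the paper deduces \cref{item.defn.R.theta.fn.sp} first from the general translation rules in \cref{rmk.prod.theta.fns} and then notes \cref{item.defn.R.theta.fn.qp} as a restatement, whereas you compute \cref{item.defn.R.theta.fn.qp} directly and deduce \cref{item.defn.R.theta.fn.sp}; and the paper writes the parity identity as $e(\tfrac{n-1}{2})=(-1)^{n-1}$ rather than splitting into cases, but this is the same arithmetic.
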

\begin{pf}
\cref{item.defn.R.theta.fn.sp}
It follows from \cref{rmk.prod.theta.fns} that the function in \cref{theta.alpha} belongs to
$\Theta_{n,-\frac{n-1}{2}-\a\eta}(\Lambda)$ and hence $\theta_\a$ belongs to $\Theta_{n,-\frac{n-1}{2}}(\L)=\Theta_{n,\frac{n-1}{2}}(\L)$.

\cref{item.defn.R.theta.fn.qp}
This is a restatement of \cref{item.defn.R.theta.fn.sp}.

\cref{item.defn.R.theta.fn.S}
Since $\theta(z+1)=\theta(z)$, 
\begin{align*}
\theta_\a(z+\tfrac{1}{n}) & \;=\;  e(\a (z+\tfrac{1}{n})  +[\a])  \prod_{m=0}^{n-1}  \theta\!\left(z +\tfrac{1+m}{n}+ \tfrac{\a}{n}\eta\right)
\\
& \;=\;  e(\tfrac{\a}{n})   e(\a z  +[\a])   \theta\!\left(z +\tfrac{1}{n}+ \tfrac{\a}{n}\eta\right)
\ldots
 \theta\!\left(z +\tfrac{n-1}{n}+ \tfrac{\a}{n}\eta\right)
  \theta\!\left(z +\tfrac{n}{n}+ \tfrac{\a}{n}\eta\right)
  \\
& \;=\;  e(\tfrac{\a}{n})   e(\a z  +[\a])  
\prod_{m=0}^{n-1}  \theta\!\left(z +\tfrac{m}{n}+ \tfrac{\a}{n}\eta\right)
  \\
& \;=\;  e(\tfrac{\a}{n}) \theta_\a(z),
\end{align*}
as claimed.

\cref{defn.R.theta.fn.per.sec}
Similarly,
\begin{align*}
\theta_\a(z+\tfrac{1}{n}\eta) 
& \;=\;  e(\a (z+\tfrac{1}{n}\eta)  +[\a])  \prod_{m=0}^{n-1}  \theta\!\left(z +\tfrac{m}{n}+\tfrac{1+\a}{n}\eta\right)
\\
& \;=\;  e(\a (z+\tfrac{1}{n}\eta)  +[\a]) e(-(\a+1) z -[\a+1])  \theta_{\a+1}(z)
  \\
& \;=\;  e(\tfrac{\a}{n}\eta  +[\a]   -[\a+1]) e(-z) \theta_{\a+1}(z),
\end{align*}
as claimed.

\cref{defn.R.theta.fn.neg}
Since $\theta(-z)=-e(-z)\theta(z)$, 
\begin{align*}
\theta_\a(-z) & \; = \;  e(-\a z+[\a])
\prod_{m=0}^{n-1}  \theta\!\left(-z +\tfrac{m}{n}+ \tfrac{\a}{n}\eta\right)
\\
& \; = \; 
e(-\a z+[\a])
\prod_{m=0}^{n-1}   (-1) e( -z + \tfrac{m}{n}+ \tfrac{\a}{n}\eta)   \theta\!\left(z -\tfrac{m}{n}-  \tfrac{\a}{n}\eta\right)
\\
& \; = \; 
 (-1)^n e(-\a z+[\a]) e(-nz+\a\eta) e( \tfrac{1}{n} + \cdots +  \tfrac{n-1}{n})
\prod_{m=0}^{n-1}    \theta\!\left(z -\tfrac{m}{n}-  \tfrac{\a}{n}\eta\right).
\end{align*}
The expression before the product symbol in the last formula is
\begin{align*}
p(z)
& \; = \; 
 (-1)^n e(-\a z+[\a]) e(-nz+\a\eta) e( \tfrac{1}{2}(n-1))
\\
& \; = \; 
 (-1)^n e(-\a z+[\a]) e(-nz+\a\eta)  (-1)^{n-1}
\\
& \; = \; 
-  e(-nz+\a\eta +[\a]) e(-\a z).
\end{align*}
Since
\begin{align*}
\prod_{m=0}^{n-1}    \theta\!\left(z -\tfrac{m}{n}-  \tfrac{\a}{n}\eta\right)
& \; = \;
\prod_{m=0}^{n-1}    \theta\!\left(z + \tfrac{n-m}{n}-  \tfrac{\a}{n}\eta\right)
\\
& \; = \;
\theta\!\left(z + \tfrac{n}{n}-\tfrac{\a}{n}\eta\right) 
\theta\!\left(z + \tfrac{n-1}{n}-\tfrac{\a}{n}\eta\right)
\cdots
\theta\!\left(z + \tfrac{1}{n}-\tfrac{\a}{n}\eta\right)
\\
& \; = \; 
e(\a z-[-\a]) \theta_{-\a}(z),
\\
\theta_\a(-z)
& \; = \;
p(z)\prod_{m=0}^{n-1}\theta\!\left(z-\tfrac{m}{n}-\tfrac{\a}{n}\eta\right)
\\
& \; = \;
-e(-nz+\a\eta+[\a])e(-\a z)e(\a z-[-\a]) \theta_{-\a}(z)
\\
& \; = \;
-e(-nz+\a\eta+[\a]-[-\a])\theta_{-\a}(z),
\end{align*}
as claimed.

\cref{item.defn.R.theta.fn.wd}
Since $\theta(z+\eta)=-e(-z)\theta(z)$, 
\begin{align*}
\theta_{\a+n}(z) & \; = \;  e((\a+n) z+[\a+n]) 
\prod_{m=0}^{n-1}  \theta\!\left(z +\tfrac{m}{n}+ \tfrac{\a+n}{n}\eta\right)
\\
& \; = \; 
 e(nz+[\a+n]-[\a]) e(\a z+[\a]) 
\prod_{m=0}^{n-1} (-1) e\big(-z -\tfrac{m}{n}- \tfrac{\a}{n}\eta\big)   \theta\!\left(z +\tfrac{m}{n}+ \tfrac{\a}{n}\eta\right)
\\
& \; = \; 
(-1)^n e(nz+[\a+n]-[\a]) e(\a z+[\a])   
 e\big(-nz -\tfrac{1}{n}-\tfrac{2}{n} \cdots -\tfrac{n-1}{n} - \a\eta\big) 
\prod_{m=0}^{n-1}   \theta\!\left(z +\tfrac{m}{n}+ \tfrac{\a}{n}\eta\right)
\\
& \; = \; 
(-1)^n e([\a+n]-[\a]) (-1)^{n-1}    e(- \a\eta)    \theta_\a(z) 
\\
& \; = \; 
- e([\a+n]-[\a]  - \a\eta)    \theta_\a(z), 
\end{align*}
as claimed.
\end{pf}

\begin{lemma}
\label{prop.basis.1}
The set $\{\theta_{0},\ldots,\theta_{n-1}\}$ in \cref{defn.R.theta.fn} is a basis for $\Theta_{n}(\L)$.  
\end{lemma}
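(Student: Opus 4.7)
The plan is to exploit the eigenvalue structure produced by \cref{item.defn.R.theta.fn.S} of \cref{defn.R.theta.prop}. Since $\Theta_n(\Lambda)$ has dimension $n$ and we have exactly $n$ candidate functions $\theta_0,\ldots,\theta_{n-1}$, all of which already lie in $\Theta_n(\Lambda)$ by \cref{item.defn.R.theta.fn.sp} of \cref{defn.R.theta.prop}, it suffices to prove linear independence.

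First I would introduce the shift operator $T\colon\Theta_n(\Lambda)\to\Theta_n(\Lambda)$ defined by $(Tf)(z):=f(z+\tfrac{1}{n})$. A quick check using the quasi-periodicity conditions defining $\Theta_n(\Lambda)$ shows that $T$ is well-defined: if $f(z+1)=f(z)$ and $f(z+\eta)=-e(-nz)f(z)$, then $(Tf)(z+1)=(Tf)(z)$ and $(Tf)(z+\eta)=-e(-n(z+\tfrac{1}{n}))f(z+\tfrac{1}{n})=-e(-nz)(Tf)(z)$, where the factor $e(-1)=1$ disappears. Hence $T$ is a linear endomorphism of the $n$-dimensional space $\Theta_n(\Lambda)$.

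Next, by \cref{item.defn.R.theta.fn.S} of \cref{defn.R.theta.prop}, each $\theta_\alpha$ satisfies $T\theta_\alpha=e(\tfrac{\alpha}{n})\theta_\alpha$, so $\theta_\alpha$ is an eigenvector of $T$ with eigenvalue $e(\tfrac{\alpha}{n})$. For $\alpha=0,1,\ldots,n-1$ these eigenvalues are the $n$ distinct $n$-th roots of unity, so $\theta_0,\ldots,\theta_{n-1}$ are eigenvectors of $T$ with pairwise distinct eigenvalues. Provided each $\theta_\alpha$ is nonzero, this forces linear independence. But nonvanishing is immediate from the product formula \cref{defn.R.theta.fn}: the exponential factor $e(\alpha z+[\alpha])$ is nowhere zero, and each factor $\theta(z+\tfrac{m}{n}+\tfrac{\alpha}{n}\eta)$ is a nonzero entire function by \cref{item.lem.riemann.theta.zeros} of \cref{lem.riemann.theta.prop}.

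I do not anticipate any serious obstacle; the only thing to be careful about is verifying that $T$ preserves $\Theta_n(\Lambda)$ (a trivial check) and confirming nonvanishing of the $\theta_\alpha$'s. Having done that, linear independence of eigenvectors with distinct eigenvalues plus a dimension count completes the proof.
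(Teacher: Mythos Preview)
Your proposal is correct and follows essentially the same approach as the paper: both arguments use that the $\theta_\alpha$ are eigenvectors of the shift operator $f(z)\mapsto f(z+\tfrac{1}{n})$ with pairwise distinct eigenvalues $e(\tfrac{\alpha}{n})$, hence linearly independent, and then finish by the dimension count $\dim\Theta_n(\Lambda)=n$. Your version is a bit more explicit (verifying the shift preserves $\Theta_n(\Lambda)$ and that each $\theta_\alpha$ is nonzero), but the idea is the same.
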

\begin{proof}
Since $\theta_{\a}$ is an eigenvector with eigenvalue $e\big(\frac{\a}{n}\big)$ for the linear transformation $f(z)\mapsto f(z+\frac{1}{n})$, 
the functions $\theta_{0},\ldots,\theta_{n-1}$ are linearly independent. 
But  $\dim \Theta_{n}(\L)=n$, so they form a basis for it.
\end{proof}

In \cref{sect.general.theta-alpha} we consider how to choose $[\a]$ and hence $\theta_\a$. 
We then devote a single subsection to the definition of the functions $\theta_\a$ in each of the following papers
of Feigin and Odesskii: the Kiev preprint \cite{FO-Kiev};  their first published paper \cite{FO89}; Odesskii's survey \cite{Od-survey}. 
Finally, in \cref{official.defn.theta-alpha}, we fix particular $[\a]$'s and define the $\theta_\a$'s that will be used in the rest of this paper and in our subsequent 
papers.

We advise the reader to jump to \cref{official.defn.theta-alpha} on a first reading. 

\subsubsection{}
\label{sect.general.theta-alpha}
We now consider the choice of $[\a]$. 
First, we want the coefficient $e(\frac{\a}{n}\eta+[\a]-[\a+1])$ in \cref{defn.R.theta.prop}\cref{defn.R.theta.fn.per.sec}  
to be a constant $C$ independent of $\alpha$. 
Second, we want equalities $\theta_{\alpha+n}=\theta_{\alpha}$ for all $\a\in \ZZ$.
Third, since adding a constant to $[\a]$ corresponds to multiplying all the $\theta_{\alpha}$'s by a common scalar, 
we normalize the function $\a \mapsto [\a]$ by requiring $[0]=0$. 
In summary, we will choose the $[\a]$'s so the following three conditions hold:
\begin{equation*}
	\begin{cases}
		\, e(\frac{\a}{n}\eta+[\a]-[\a+1]) \;=\; C,\\
		\, -e([\a+n]-[\a]-\a\eta) \; = \; 1,\\
		\, [0]\; = \; 0.
	\end{cases}	
\end{equation*}
Taken together, the first and the third of these conditions imply that 
\begin{align*}
	C^{\a} &\;=\;  \prod_{i=0}^{\a-1} e\left(\tfrac{i}{n}\eta+[i]-[i+1]\right) \;=\; 	e\left(\tfrac{\a(\a-1)}{2n}\eta-[\a]\right)\qquad\text{and}\\
	C^{\a} &\;=\;  \prod_{i=-\alpha}^{-1} e\left(\tfrac{i}{n}\eta+[i]-[i+1]\right) \;=\; 	e\left(-\tfrac{(-\a)(-\a-1)}{2n}\eta+[-\a]\right)
\end{align*}
for all integers $\a>0$.
Hence
\begin{equation*}
	e([\a]) \;=\; C^{-\a}e\left(\tfrac{\a(\a-1)}{2n}\eta\right)
\end{equation*}
for all $\a \in \ZZ$.
Substituting this into the second condition, implies that
\begin{equation*}
	1\;=\; -\, e([\a+n]-[\a]-\a\eta)\;=\; -\, C^{-n}e\left(\tfrac{(\a+n)(\a+n-1)}{2n}\eta-\tfrac{\a(\a-1)}{2n}\eta-\a\eta\right) \;=\; 
	C^{-n}e\left(-\tfrac{1}{2}+\tfrac{n-1}{2}\eta\right).
\end{equation*}
Therefore $C=e\left(\frac{r}{n}-\frac{1}{2n}+\frac{n-1}{2n}\eta\right)$ for some integer $r$. It follows that
\begin{equation*}
	e([\alpha]) \;=\; e\left(\tfrac{\a(1-2r)}{2n}+\tfrac{\a(\a-n)}{2n}\eta\right).
\end{equation*}
Parts  \cref{defn.R.theta.fn.per.sec} and \cref{defn.R.theta.fn.neg} of \cref{defn.R.theta.prop} now become
\begin{align*}
	\theta_\a(z+\tfrac{1}{n}\eta) &  \; = \;  e\left(-z+\tfrac{2r-1}{2n}+\tfrac{n-1}{2n}\eta\right)\theta_{\a+1}(z),\\ 
	\theta_\a(-z) & \; = \;  -  e\left(-nz+\tfrac{\a(1-2r)}{n}\right)\theta_{-\a}(z).
\end{align*}
The next result summarizes these discussions.\footnote{We note that $e([\a])$ depends only on the image of $r$ in $\ZZ_n$.}

\begin{lemma}\label{std.basis.theta.func}
	Let $r\in\bbZ$ be any integer. The functions
\begin{equation}
\label{defn.R.theta.fn.cond}
\theta_\a(z) \; :=\; e\left(\a z +\tfrac{\a(1-2r)}{2n}+\tfrac{\a(\a-n)}{2n}\eta\right)  \prod_{m=0}^{n-1}  \theta\!\left(z +\tfrac{m}{n}+ \tfrac{\a}{n}\eta\right),
\end{equation}
indexed by $\alpha\in\bbZ$, have the following properties:
	\begin{enumerate}
	\item
	$\theta_{\a+n}=\theta_{\a}$,
	\item
	$\{\theta_{0},\ldots,\theta_{n-1}\}$ is a basis for $\Theta_{n}(\L)$,
	\item 
	$\theta_\a(z+\tfrac{1}{n})  \; = \; e\big(\tfrac{\a}{n}\big) \theta_\a(z)$, 
	\item\label{defn.R.theta.fn.cond.per.sec}
	$\theta_\a(z+\tfrac{1}{n}\eta)  \; = \;  e\big(-z+\tfrac{2r-1}{2n}+\tfrac{n-1}{2n}\eta\big)\theta_{\a+1}(z)$, and
	\item\label{defn.R.theta.fn.cond.neg}
	$\theta_\a(-z)  \; = \;  -  e\big(-nz+\tfrac{\a(1-2r)}{n}\big)\theta_{-\a}(z)$.
	\end{enumerate}
\end{lemma}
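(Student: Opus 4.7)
The plan is to observe that Lemma~\ref{std.basis.theta.func} is a direct specialization of Lemma~\ref{defn.R.theta.prop} and Lemma~\ref{prop.basis.1}, obtained by making the particular choice
\[
[\a] \;=\; \tfrac{\a(1-2r)}{2n}+\tfrac{\a(\a-n)}{2n}\eta.
\]
The preceding discussion in \S\ref{sect.general.theta-alpha} already derived, by imposing the three normalization conditions (a constant ``period'' $C$ in the $\tfrac{1}{n}\eta$ direction, $n$-periodicity in the index $\a$, and $[0]=0$), that $e([\a])$ must equal $C^{-\a}e\!\left(\tfrac{\a(\a-1)}{2n}\eta\right)$ with $C=e\!\left(\tfrac{r}{n}-\tfrac{1}{2n}+\tfrac{n-1}{2n}\eta\right)$ for some integer $r$. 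A direct expansion shows that the corresponding value of $e([\a])$ agrees with the prefactor appearing in~\cref{defn.R.theta.fn.cond}, so the two formulas define the same functions.

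Given this identification, claim~(2) is simply Lemma~\ref{prop.basis.1}, and claim~(3) is Lemma~\ref{defn.R.theta.prop}\cref{item.defn.R.theta.fn.S}; neither requires any further computation. For claim~(1) I would apply Lemma~\ref{defn.R.theta.prop}\cref{item.defn.R.theta.fn.wd} and verify that with the chosen $[\a]$ one has $-e([\a+n]-[\a]-\a\eta)=1$; but this is precisely the second normalization condition that determined the constant $C$, so it holds by construction.

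Claims~(4) and~(5) come from parts~\cref{defn.R.theta.fn.per.sec} and~\cref{defn.R.theta.fn.neg} of Lemma~\ref{defn.R.theta.prop} together with two short algebraic simplifications. For~(4) I would compute
\[
\tfrac{\a}{n}\eta+[\a]-[\a+1] \;=\; \tfrac{2r-1}{2n}+\tfrac{\a(\a-n)-(\a+1)(\a+1-n)}{2n}\eta+\tfrac{\a}{n}\eta \;=\; \tfrac{2r-1}{2n}+\tfrac{n-1}{2n}\eta,
\]
using that $\a(\a-n)-(\a+1)(\a+1-n)=-2\a-1+n$. For~(5) the analogous calculation
\[
\a\eta+[\a]-[-\a] \;=\; \a\eta + \tfrac{\a(1-2r)}{n}+\tfrac{\a(\a-n)-(-\a)(-\a-n)}{2n}\eta \;=\; \tfrac{\a(1-2r)}{n}
\]
collapses the $\eta$-terms. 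Substituting these two identities back into Lemma~\ref{defn.R.theta.prop}\cref{defn.R.theta.fn.per.sec,defn.R.theta.fn.neg} gives exactly the formulas in~(4) and~(5).

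There is no serious obstacle; the only place where one must be careful is to keep the bookkeeping of the $\eta$-exponents straight in the two short displayed computations above, since sign errors and off-by-one shifts in the index $\a$ are easy to commit. The conceptual content of the lemma is entirely contained in the derivation of \S\ref{sect.general.theta-alpha}, which reduces the statement to the verifications just described.
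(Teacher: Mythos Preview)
Your proposal is correct and matches the paper's approach exactly: the paper presents this lemma explicitly as a summary of the discussion in \S\ref{sect.general.theta-alpha} (``The next result summarizes these discussions''), deriving the specific choice of $[\a]$ from the three normalization conditions and then reading off the transformation properties from \cref{defn.R.theta.prop} and \cref{prop.basis.1}. Your two displayed algebraic simplifications for parts~(4) and~(5) are the only steps left implicit in the paper, and you have carried them out correctly.
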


The key point in each of the next three subsections  is how to choose the integer $r$ (modulo $n$) so the functions $\theta_\a$
have the properties that Feigin and Odesskii ask of them.

\subsubsection{}\label{subsubsec.Kiev.charact.theta.basis}
	The appendix of the Kiev preprint \cite{FO-Kiev} says that when $n$ is odd 
	$\Theta_n(\L)$ has a basis $\{\theta_\a \; | \; \a \in \ZZ_n\}$ 
	such that
\begin{enumerate}
  \item\label{kiev.theta.prop.first.per} 
  $\theta_\a(z+\tfrac{1}{n}) \;=\; e\big(\frac{\a}{n}\big) \theta_\a(z)$,
  \item\label{kiev.theta.prop.sec.per} 
  $\theta_\a(z+\tfrac{1}{n}\eta) \;=\; - e\big(-z  + \frac{n-1}{2n}\eta \big) \theta_{\a+1}(z)$, 
  \item\label{kiev.theta.wrong.prop} 
  $\theta_\a(-z)= e(- nz) \theta_{-\a}(z)$, and
  \item\label{kiev.theta.prop.zeros} 
$\theta_\a(z)$ is zero exactly at the points in  $-\frac{\a}{n}\eta +\frac{1}{n}\ZZ + \ZZ \eta$.
\end{enumerate}
This is false. 
There is no integer $r$ such that the functions $\theta_{\alpha}$ defined by \cref{defn.R.theta.fn.cond} have these four properties:  if there were, then \cref{kiev.theta.wrong.prop} together with \cref{std.basis.theta.func}\cref{defn.R.theta.fn.cond.neg}
would imply that $\frac{\a(1-2r)}{n}+\frac{1}{2}$ is an integer, which is not the case when $\alpha=0$, for example.

If \cref{kiev.theta.prop.sec.per} held, then \cref{std.basis.theta.func}\cref{defn.R.theta.fn.cond.per.sec} would imply that 
the number $s:=\tfrac{2r-1}{2n}-\tfrac{1}{2}$ is an integer so $r=\frac{n+1}{2}+ns=\frac{n+1}{2}$ ($\mathrm{mod}$ $n$) which implies that $n$ is odd.
If $n$ is odd and $r=\frac{n+1}{2}$, then the functions $\theta_{\alpha}$ in \cref{defn.R.theta.fn.cond} 
satisfy \cref{kiev.theta.prop.first.per}, \cref{kiev.theta.prop.sec.per}, \cref{kiev.theta.prop.zeros},  and $\theta_\a(-z)  \; = \;  -  e(-nz)\theta_{-\a}(z)$;
in \cref{subsec.alt.basis} we denote these functions by $\psi_\a$ (only when $n$ is odd).
It is likely that the $\theta_\a$'s in the Kiev preprint are the $\psi_\a$'s and the statement that $\theta_a(-z)=e(-nz)\theta_{-\a}(z)$
in its appendix is a typo.  
In \cref{ssect.Kiev.relns.n.odd}, we make some additional comments about the $\theta_\a$'s in the Kiev preprint.

\subsubsection{}\label{subsubsec.FO.charact.theta.basis}
	Let $2^p$ be the largest power of $2$ dividing $n$. The paper \cite[\S1.1]{FO89} says that 
	$\Theta_n(\L)$ has a basis $\{\theta_\a 	\; | \; \a \in \ZZ_n\}$  	such that
\begin{enumerate}
  \item\label{of.theta.prop.first.per}
  $\theta_\a(z+\tfrac{1}{n}) \;=\; e\big(\frac{\a}{n}\big) \theta_\a(z)$,
  \item\label{of.theta.prop.sec.per}  
  $\theta_\a(z+\frac{1}{n}\eta) \;=\;  e\big(-z -2^{-p-1} + \frac{n-1}{2n}\eta \big) \theta_{\a+1}(z)$, 
  \item\label{of.theta.prop.neg1}  
  $\theta_\a(-z)= - e(- nz) \theta_{-\a}(z)$ if $n$ is odd, 
    \item\label{of.theta.prop.neg2}  
  $\theta_\a(-z)= - e(- nz+2^{-p}\a) \theta_{-\a}(z)$ if $n$ is even, and
  \item\label{of.theta.prop.zeros}
$\theta_\a(z)$ is zero exactly at the points in $-\frac{\a}{n}\eta +\frac{1}{n}\ZZ + \ZZ \eta$.
\end{enumerate}
	If the functions $\theta_{\alpha}$ defined by \cref{defn.R.theta.fn.cond} satisfy these five properties, then \cref{of.theta.prop.sec.per} implies that the number $s:=\frac{2r-1}{2n}-(-2^{-p-1})$ is an integer and $r=\frac{1}{2}(-\frac{n}{2^{p}}+1)+ns=\frac{1}{2}(-\frac{n}{2^{p}}+1)$.
	
	Conversely, set $r=\frac{1}{2}(-\frac{n}{2^{p}}+1)$, which is always an integer.\footnote{If we write $n=2^{p}(2l+1)$ as in \cite{FO89}, then $r=-l$ modulo $n$.} Since $\frac{1-2r}{n}=2^{-p}$, the function $\theta_\a$ in  \cref{defn.R.theta.fn.cond} now has the property that
	\begin{equation}\label{eq.theta.minus.FO}
		\theta_{\a}(-z) \; =\; -\,e(-nz+2^{-p}\alpha)\theta_{-\a}(z).
	\end{equation}
Hence conditions \cref{of.theta.prop.neg1} and \cref{of.theta.prop.neg2} are satisfied, and so are \cref{of.theta.prop.first.per}, \cref{of.theta.prop.sec.per}, and \cref{of.theta.prop.zeros}. We also note that 
$$
e([\a]) \;=\; e\left(2^{-p-1}\a+\tfrac{\a(\a-n)}{2n}\eta\right)
$$
in this case.

\subsubsection{}
\label{sssect.od.survey} 
	Odesskii's survey \cite[Appendix~A]{Od-survey} says  $\Theta_{n}(\L)$ has a basis $\{\theta_\a \; | \; \a \in \ZZ_n\}$ such that
	\begin{enumerate}
  \item\label{od.surv.theta.prop.first.per} 
  $\theta_\a(z+\tfrac{1}{n}) \;=\; e\big(\frac{\a}{n}\big) \theta_\a(z)$,
  \item\label{od.surv.theta.prop.sec.per} 
  $\theta_\a(z+\tfrac{1}{n}\eta) \;=\; e\big(-z -\frac{1}{2n} + \frac{n-1}{2n}\eta \big) \theta_{\a+1}(z)$, and
  \item\label{od.surv.theta.prop.prod} 
  $\theta_\a(z) \; =\; e(\a z +\tfrac{\a}{2n}+\tfrac{\a(\a-n)}{2n}\eta)  \prod_{m=0}^{n-1}  \theta\!\left(z +\frac{m}{n}+ \frac{\a}{n}\eta\right)$.
\end{enumerate}
If the functions $\theta_{\alpha}$ in \cref{defn.R.theta.fn.cond} satisfy \cref{od.surv.theta.prop.sec.per}, then $r$ is divisible by $n$. If $r$ {\it is} divisible by $n$, then the functions $\theta_{\alpha}$ in \cref{defn.R.theta.fn.cond} have properties \cref{od.surv.theta.prop.first.per}, \cref{od.surv.theta.prop.sec.per},
and \cref{od.surv.theta.prop.prod}.

\subsubsection{The ``standard'' definition of $\theta_\a$}
\label{official.defn.theta-alpha}
From now on, unless otherwise stated, $\theta_{\alpha}$ denotes the function in \cref{defn.R.theta.fn.cond} with $r=0$ modulo $n$.\footnote{The function in \cref{defn.R.theta.fn.cond} only depends on $r$ modulo $n$.} We repeat this definition in \cref{official.theta_alpha}
below.  
As remarked in \cref{sssect.od.survey}, the function $\theta_\a$ in  \cref{official.theta_alpha} is the same as the function 
$\theta_\a$ defined in Odesskii's survey \cite[Appendix~A]{Od-survey}. 

\begin{proposition}\label{prop.official.theta.basis}
  The functions
  \begin{equation}
    \label{official.theta_alpha}
    \theta_\a(z) \; :=\; e\left(\a z +\tfrac{\a}{2n}+\tfrac{\a(\a-n)}{2n}\eta\right)  \prod_{m=0}^{n-1}  \theta\!\left(z +\tfrac{m}{n}+ \tfrac{\a}{n}\eta\right),
  \end{equation}
  indexed by $\alpha\in\bbZ$, have the following properties:
  \begin{enumerate}
  \item
    $\theta_{\a+n}=\theta_{\a}$.
  \item
    $\{\theta_{0},\ldots,\theta_{n-1}\}$ is a basis for $\Theta_{n}(\L)$.
  \item 
    $\theta_\a(z+\tfrac{1}{n})  \; = \; e\big(\tfrac{\a}{n}\big) \theta_\a(z)$.
  \item\label{item.official.theta.basis.act.eta}
    $\theta_\a(z+\tfrac{1}{n}\eta)  \; = \;  e\big(-z-\tfrac{1}{2n}+\tfrac{n-1}{2n}\eta\big)\theta_{\a+1}(z)$.
  \item\label{item.official.theta.basis.minus}
    $\theta_\a(-z)\;=\;-e\big(-nz+\tfrac{\a}{n}\big)\theta_{-\a}(z)$.
  \item\label{item.official.theta.basis.zeros}
    The zeros of $\theta_\a$ are the points in $-\tfrac{\a}{n}\eta +\tfrac{1}{n}\ZZ + \ZZ\eta$
    and all of them have multiplicity one.
  \item
    \label{eq.torsion.translate}
    For all $r\in\ZZ$,
    $		\theta_{\a}(z+\tfrac{r}{n}\eta)     
    \,=\,                e\big(-rz-\tfrac{r}{2n}+\tfrac{rn-r^2}{2n}\eta\big)   \theta_{\a+r}(z)$.
  \end{enumerate}
\end{proposition}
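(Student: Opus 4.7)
The plan is to derive nearly everything from \cref{std.basis.theta.func} by specializing to $r=0$ modulo $n$, then to handle the two genuinely new items (the zero set and the translation-by-$\tfrac{r}{n}\eta$ formula) by direct computation and induction.

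First, I would observe that the function $\theta_\alpha$ displayed in \cref{official.theta_alpha} is exactly the function in \cref{defn.R.theta.fn.cond} with $r=0$, since $\tfrac{1-2r}{2n}=\tfrac{1}{2n}$. Therefore items (1) through (3) are immediate from \cref{std.basis.theta.func}, and item (4) follows from \cref{std.basis.theta.func}\cref{defn.R.theta.fn.cond.per.sec} because $\tfrac{2r-1}{2n}=-\tfrac{1}{2n}$, while item (5) follows from \cref{std.basis.theta.func}\cref{defn.R.theta.fn.cond.neg} because $\tfrac{\alpha(1-2r)}{n}=\tfrac{\alpha}{n}$. No computation beyond substitution is needed here.

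For item (6), I would use the product formula \cref{official.theta_alpha} together with \cref{lem.riemann.theta.prop}\cref{item.lem.riemann.theta.zeros}. Since the exponential prefactor has no zeros, the zero set of $\theta_\alpha$ is the union of the zero sets of the $n$ factors $\theta(z+\tfrac{m}{n}+\tfrac{\alpha}{n}\eta)$ for $0\le m\le n-1$. Each such factor vanishes exactly on $-\tfrac{m}{n}-\tfrac{\alpha}{n}\eta+\Lambda$ with multiplicity one, and as $m$ runs through $\{0,\dots,n-1\}$ and we translate by $\Lambda$ the union is exactly $-\tfrac{\alpha}{n}\eta+\tfrac{1}{n}\ZZ+\ZZ\eta$. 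Simplicity of the zeros follows because the $n$ cosets $-\tfrac{m}{n}+\Lambda$ in $\CC/\Lambda$ are distinct, so the zero sets of the factors are disjoint.

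For item (7), I would proceed by induction on $r$, taking item (4) as the case $r=1$. Writing $w=z+\tfrac{r-1}{n}\eta$ and applying item (4) to $\theta_\alpha(w+\tfrac{1}{n}\eta)$ reduces the problem to $\theta_{\alpha+1}(z+\tfrac{r-1}{n}\eta)$, to which the inductive hypothesis applies. A short accumulation of exponents then yields the stated formula: the $z$-coefficient collapses to $-r$, the constant to $-\tfrac{r}{2n}$, and the $\eta$-coefficient telescopes to $\tfrac{rn-r^2}{2n}$. The formula extends to negative $r$ by combining what has been proved with item (1) (or equivalently by running the induction downward). The only place one needs to be slightly careful is bookkeeping of the $\eta$-terms when combining the inductive factor with the new factor from item (4); this is the one nontrivial algebraic check, but it is elementary.
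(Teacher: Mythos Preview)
Your proposal is correct and follows essentially the same approach as the paper: items (1)--(6) are inherited from \cref{std.basis.theta.func} (with $r=0$) and the product formula, while item (7) is proved by induction from item (4). The only cosmetic difference is in extending (7) to negative $r$: the paper does it by substituting $\alpha\mapsto\alpha-r$, $z\mapsto z-\tfrac{r}{n}\eta$ into the formula already established for $r\ge 0$, whereas you invoke item (1) or downward induction---all three routes are equivalent one-line maneuvers.
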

\begin{proof}
All of this, with the exception of part \cref{eq.torsion.translate} has been proved before. The formula in \cref{eq.torsion.translate} is first proved by induction for all $r \ge  0$, then, 
by replacing $\a$ by $\a-r$ and $z$ by $z-\frac{r}{n}\eta$ in the formula, one sees that it holds for all $r\in \ZZ$.
\end{proof}

The basis $\theta_{0}$ for $\Theta_{0}(\Lambda)$ is the function $\theta$ defined in \cref{eq.basic.theta.fn}.

\subsubsection{}
A basis for $\Theta_{n,c}(\Lambda)$ can be constructed from the basis $\theta_\a$ for $\Theta_n(\L)=\Theta_{n,\frac{n-1}{2}}(\L)$.

\begin{proposition}\label{theta.basis.c}
	For $\a \in \ZZ$, let $\theta_\alpha$ be the function defined in  \cref{official.theta_alpha}. The functions
	\begin{equation*}
		\theta_{\a,c}(z) \; := \; \theta_{\a}(z-\tfrac{1}{n}c+\tfrac{n-1}{2n})
	\end{equation*}
	have the following properties:
	\begin{enumerate}
		\item\label{item.theta.basis.c.n} $\theta_{\alpha+n,c}=\theta_{\alpha,c}$.
		\item\label{item.theta.basis.c.basis} $\{\theta_{0,c},\ldots,\theta_{n-1,c}\}$ is a basis of $\Theta_{n,c}(\L)$.
		\item\label{item.theta.basis.c.S} $\theta_{\a,c}(z+\frac{1}{n})=e(\frac{\a}{n})\theta_{\a,c}(z)$.
		\item\label{item.theta.basis.c.T} $\theta_{\a,c}(z+\frac{1}{n}\eta)=-e(-z+\frac{1}{n}c+\frac{n-1}{2n}\eta)\theta_{\a+1,c}(z)$.
		\item\label{item.theta.basis.c.unad} $\theta_{\alpha,\frac{n-1}{2}}(z)=\theta_{\alpha}(z)$.
	\end{enumerate}
\end{proposition}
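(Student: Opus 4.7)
The plan is to derive each of the five properties directly from the corresponding properties of $\theta_\alpha$ in \cref{prop.official.theta.basis}, since $\theta_{\alpha,c}$ is defined as the translate of $\theta_\alpha$ by the scalar $-\tfrac{c}{n}+\tfrac{n-1}{2n}$. Parts \cref{item.theta.basis.c.n} and \cref{item.theta.basis.c.unad} are immediate: the first follows from $\theta_{\alpha+n}=\theta_\alpha$, and the second from the fact that for $c=\tfrac{n-1}{2}$ the translation amount is zero.

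For part \cref{item.theta.basis.c.S}, I would compute $\theta_{\alpha,c}(z+\tfrac{1}{n})=\theta_\alpha\bigl((z+\tfrac{1}{n})-\tfrac{c}{n}+\tfrac{n-1}{2n}\bigr)$ and apply \cref{prop.official.theta.basis}(3) to pull out the factor $e(\tfrac{\alpha}{n})$. For part \cref{item.theta.basis.c.T}, I apply \cref{prop.official.theta.basis}\cref{item.official.theta.basis.act.eta} with $w=z-\tfrac{c}{n}+\tfrac{n-1}{2n}$, which yields a prefactor
\[
e\Bigl(-z+\tfrac{c}{n}-\tfrac{n-1}{2n}-\tfrac{1}{2n}+\tfrac{n-1}{2n}\eta\Bigr) \;=\; e\Bigl(-z+\tfrac{c}{n}-\tfrac{1}{2}+\tfrac{n-1}{2n}\eta\Bigr),
\]
and the factor $e(-\tfrac{1}{2})=-1$ accounts for the sign in the desired identity.

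For part \cref{item.theta.basis.c.basis}, I would first verify $\theta_{\alpha,c}\in\Theta_{n,c}(\Lambda)$. The quasi-periodicity in $1$ is immediate. The quasi-periodicity in $\eta$ follows either by a direct computation using $\theta_\alpha(w+\eta)=e(-nw+n-\tfrac{1}{2})\theta_\alpha(w)$, substituting $w=z-\tfrac{c}{n}+\tfrac{n-1}{2n}$, and simplifying to the required $e(-nz+c+\tfrac{n}{2})$, or more conceptually by invoking \cref{rmk.prod.theta.fns}(3): translating the argument of $\theta_\alpha\in\Theta_{n,(n-1)/2}(\Lambda)$ by $-\tfrac{c}{n}+\tfrac{n-1}{2n}$ produces a function in $\Theta_{n,(n-1)/2-n(-c/n+(n-1)/(2n))}(\Lambda)=\Theta_{n,c}(\Lambda)$. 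Linear independence of the $\theta_{\alpha,c}$ then follows from part \cref{item.theta.basis.c.S}, which exhibits them as eigenvectors of the operator $f(z)\mapsto f(z+\tfrac{1}{n})$ with pairwise distinct eigenvalues $e(\tfrac{\alpha}{n})$ for $\alpha=0,\dots,n-1$. Since $\dim\Theta_{n,c}(\Lambda)=n$, these $n$ linearly independent elements form a basis.

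No step looks to be a genuine obstacle; the proof is essentially bookkeeping with the translation formulas from \cref{prop.official.theta.basis}. The only place requiring a small amount of care is the arithmetic in part \cref{item.theta.basis.c.T}, where one must track the constant terms correctly to see the sign emerging from $e(-\tfrac{1}{2})$.
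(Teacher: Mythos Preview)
Your proposal is correct and follows essentially the same approach as the paper's own proof: both derive each item directly from the corresponding properties in \cref{prop.official.theta.basis}, and both establish the basis claim by the eigenvector/dimension argument. The only minor difference is that you mention the alternative of invoking \cref{rmk.prod.theta.fns}(3) to verify membership in $\Theta_{n,c}(\Lambda)$, whereas the paper does only the direct quasi-periodicity computation; either works.
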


\begin{proof}
	It is clear that \cref{item.theta.basis.c.unad} holds.

	The properties \cref{item.theta.basis.c.n}, $\theta_{\a,c}(z+1)=\theta_{\a,c}(z)$, and $\theta_{\a,c}(z+\frac{1}{n})=e(\frac{\a}{n})\theta_{\a,c}(z)$ follow from 
	the same properties of $\theta_{\a}$. 
	Let $d:=\frac{1}{n}c-\frac{n-1}{2n}$. Then
	\begin{align*}
		\theta_{\a,c}(z+\eta)
		&\;=\; \theta_{\a}(z+\eta-d)\\
		&\;=\; -e(-nz+nd)\theta_{\a}(z-d)\\
		&\;=\; -e(-nz+c-\tfrac{n-1}{2})\theta_{\a,c}(z)\\
		&\;=\; (-1)^{n}e(-nz+c)\theta_{\a,c}(z).
	\end{align*}
	Hence $\theta_{\a,c}\in\Theta_{n,c}(\L)$. Since the $\theta_{0,c}, \ldots, \theta_{n-1,c}$ are eigenvectors for the linear operator 
	$f(z)\mapsto f(z+\frac{1}{n})$ with different eigenvalues and the dimension of $\Theta_{n,c}(\L)$ is $n$, they are a basis for 
	$\Theta_{n,c}(\L)$.
	
Statement \cref{item.theta.basis.c.T} holds because 
	\begin{align*}
		\theta_{\a,c}(z+\tfrac{1}{n}\eta)
		&\;=\; \theta_{\a}(z+\tfrac{1}{n}\eta-d)\\
		&\;=\; e(-z+d-\tfrac{1}{2n}+\tfrac{n-1}{2n}\eta)\theta_{\a+1}(z-d)\\
		&\;=\; e(-z+\tfrac{1}{n}c-\tfrac{1}{2}+\tfrac{n-1}{2n}\eta)\theta_{\a+1,c}(z)\\
		&\;=\; -e(-z+\tfrac{1}{n}c+\tfrac{n-1}{2n}\eta)\theta_{\a+1,c}(z).
	\end{align*}
The proof is now complete.	
\end{proof}

In \cite[Appendix~A]{Od-survey}, Odesskii considered another basis $\{\theta_{\alpha}(z-\frac{1}{n}c-\frac{n-1}{2n})\;|\;\a\in\ZZ_{n}\}$ for $\Theta_{n,c}(\Lambda)$. It {\it is} a basis because
\begin{align*}
	\theta_{\alpha}(z-\tfrac{1}{n}c-\tfrac{n-1}{2n})
	&\;=\;
	\theta_{\alpha}(z-\tfrac{1}{n}c+\tfrac{n-1}{2n}-1+\tfrac{1}{n})\\
	&\;=\;
	e(\tfrac{\alpha}{n})\theta_{\alpha}(z-\tfrac{1}{n}c+\tfrac{n-1}{2n})\\
	&\;=\;
	e(\tfrac{\alpha}{n})\theta_{\a,c}(z).
\end{align*}

\subsection{$\Theta_n(\L)$ as a representation of the Heisenberg group}

Fix $d \in \CC$. Let $S$ and $T$ be the operators on the space of meromorphic functions on $\CC$ defined by
\begin{align*}
(S\cdot f)(z) & \;=\; f\big(z+\tfrac{1}{n}\big),
\\
(T\cdot f)(z) & \;=\; e(z+d)f\big(z+\tfrac{1}{n}\eta\big).
\end{align*}
Both $S$ and $T$ are invertible and satisfy $ST=e(\frac{1}{n})TS$.

It is clear that $\Theta_n(\L)$ is stable under the action of $S$ and $T$ and that $S^n$ acts as the identity on $\Theta_n(\L)$. 
When $d=\frac{1}{2n}- \frac{n-1}{2n}\eta$ the 
operator $T^n$ also acts as the identity on $\Theta_n(\L)$ because
\begin{align*}
(T^n\cdot f)(z)  & \;=\; e(z+d)(T^{n-1}\cdot f)\big(z+\tfrac{1}{n}\eta\big)
\\
& \;=\; e(z+d)e(z+\tfrac{1}{n}\eta+d) (T^{n-2}\cdot f)\big(z+\tfrac{2}{n}\eta\big)
\\
& \;=\; \cdots
\\
& \;=\; e(z+d)e(z+\tfrac{1}{n}\eta+d) \cdots  e(z+\tfrac{n-1}{n}\eta+d)  f\big(z+\tfrac{n}{n}\eta\big)
\\
& \;=\; e(nz+nd +\tfrac{n-1}{2}\eta)  f\big(z+\eta\big)
\\
& \;=\; - \, e(nd +\tfrac{n-1}{2}\eta)  f(z)
\\
& \;=\;   f(z).
\end{align*}

This leads to a representation of the {\sf Heisenberg group of order $n^3$} on $\Theta_n(\L)$.  
This group is
\begin{equation}
\label{defn.Hn}
H_n \; : =\; \langle S,T, \epsilon \; | \; S^n=T^n=\epsilon^n=1,  \, \epsilon =[S,T], \, [S,\epsilon]=[T,\epsilon]=1\rangle.
\end{equation}

\begin{lemma}
\label{lem.Hn.1}
The space $\Theta_n(\L)$ is an irreducible representation of $H_n$ via the actions
\begin{align*}
(S\cdot f)(z) & \;=\; f\big(z+\tfrac{1}{n}\big),
\\
(T\cdot f)(z) & \;=\; e\big(z+\tfrac{1}{2n}-\tfrac{n-1}{2n}\eta \big)f\big(z+\tfrac{1}{n}\eta\big).
\end{align*}
The action on the $\theta_\a$'s in \cref{official.theta_alpha} is 
\begin{align*}
S\cdot \theta_\a& \;=\;   e\big( \tfrac{\a}{n} \big) \theta_{\a},
\\
T\cdot \theta_\a& \;=\; \theta_{\a+1}.
\end{align*}
\end{lemma}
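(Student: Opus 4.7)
The plan is to verify the representation structure and then deduce irreducibility from the eigenspace decomposition.

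First, I would verify that the formulas for $S \cdot \theta_\alpha$ and $T \cdot \theta_\alpha$ hold. The identity $S \cdot \theta_\alpha = e(\alpha/n) \theta_\alpha$ is immediate from \cref{prop.official.theta.basis}(3). For the action of $T$, I would combine the definition with \cref{prop.official.theta.basis}\cref{item.official.theta.basis.act.eta}:
\begin{equation*}
(T \cdot \theta_\alpha)(z) \;=\; e\bigl(z + \tfrac{1}{2n} - \tfrac{n-1}{2n}\eta\bigr)\,\theta_\alpha\bigl(z + \tfrac{1}{n}\eta\bigr) \;=\; e\bigl(z + \tfrac{1}{2n} - \tfrac{n-1}{2n}\eta\bigr) \cdot e\bigl(-z - \tfrac{1}{2n} + \tfrac{n-1}{2n}\eta\bigr)\,\theta_{\alpha+1}(z) \;=\; \theta_{\alpha+1}(z).
\end{equation*}
In particular both $S$ and $T$ preserve $\Theta_n(\Lambda)$, since the $\theta_\alpha$ form a basis by \cref{prop.basis.1}.

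Next I would check that the assignment $S \mapsto S$, $T \mapsto T$, $\epsilon \mapsto e(\tfrac{1}{n})\,\mathrm{id}$ defines a genuine representation of $H_n$ on $\Theta_n(\Lambda)$. The relations $S^n = 1$ and $T^n = 1$ on $\Theta_n(\Lambda)$ follow from the formulas $S \cdot \theta_\alpha = e(\alpha/n)\theta_\alpha$ and $T \cdot \theta_\alpha = \theta_{\alpha+1}$ together with $\theta_{\alpha+n}=\theta_\alpha$ from \cref{prop.official.theta.basis}(1); the relation $T^n = 1$ was in any case already verified in the discussion just above the lemma. The commutator identity $ST = e(\tfrac{1}{n}) TS$ was also noted there, and is consistent with $\epsilon$ acting as the scalar $e(\tfrac{1}{n})$, which has order $n$ as required by the presentation \cref{defn.Hn}. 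This shows the defining relations of $H_n$ are satisfied.

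For irreducibility, I would argue as follows. Since the eigenvalues $e(\alpha/n)$ of $S$ on the basis $\theta_0, \ldots, \theta_{n-1}$ are pairwise distinct, any $S$-stable subspace $W \subseteq \Theta_n(\Lambda)$ decomposes as a direct sum of some subset of the lines $\mathbb{C}\theta_\alpha$. If $W$ is also $T$-stable and nonzero, then $\theta_\alpha \in W$ for some $\alpha$, and since $T \cdot \theta_\alpha = \theta_{\alpha+1}$ the $T$-orbit of $\theta_\alpha$ is the full basis, forcing $W = \Theta_n(\Lambda)$. Hence $\Theta_n(\Lambda)$ is irreducible as an $H_n$-representation.

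The only real work is the verification of the $T$-action; everything else is bookkeeping already set up by \cref{prop.official.theta.basis} and the preceding calculation of $T^n$. I do not expect any serious obstacle.
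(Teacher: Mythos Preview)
Your proposal is correct and follows essentially the same approach as the paper's proof: both verify $T\cdot\theta_\a=\theta_{\a+1}$ by combining the definition of $T$ with \cref{prop.official.theta.basis}\cref{item.official.theta.basis.act.eta}, and both deduce irreducibility from the fact that the $\theta_\a$'s are $S$-eigenvectors with distinct eigenvalues while $T$ permutes them cyclically. Your additional explicit check that the $H_n$-relations hold is more careful than the paper's version, which relies on the computations preceding the lemma.
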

\begin{proof}
The action of $S$ and $T$ on the $\theta_\a$'s is as claimed because $\theta_{\a}\big(z+\frac{1}{n}\big)=e\big(\frac{\a}{n}\big)\theta_{\a}(z)$
and 
\begin{align*} 
(T\cdot \theta_\a)(z) & \;=\; e\big(z+\tfrac{1}{2n}-\tfrac{n-1}{2n}\eta \big)\theta_\a \big(z+\tfrac{1}{n}\eta\big)
\\
& \;=\; e\big(z+\tfrac{1}{2n}-\tfrac{n-1}{2n}\eta \big)e\big(-z-\tfrac{1}{2n}  + \tfrac{n-1}{2n}\eta\big)     \theta_{\a+1} (z) 
\\
& \;=\;  \theta_{\a+1} (z).
\end{align*}
Because the $\theta_\a$'s are $S$-eigenvectors with different eigenvalues, every
subspace of $\Theta_n(\L)$ that is stable under the action of $S$ is spanned by some of the $\theta_\a$'s. 
Since  $T \cdot \theta_\a = \theta_{\a+1}$ the only non-zero subrepresentation of $\Theta_n(\L)$ 
is $\Theta_n(\L)$ itself. Hence $\Theta_n(\L)$ is an irreducible representation of $H_n$. 
\end{proof}

\subsection{Embedding $E$ in $\PP^{n-1}$ via $\Theta_n(\L)$}
\label{ssec.emb.E}

Evaluation at a point $z\in \CC$ provides a surjective linear map $\Theta_n(\L) \to \CC$. 
The kernel of this evaluation map depends only on the coset $z+\L$ so there is a well-defined map from $\CC/\L$ to the set of codimension-one subspaces of $\Theta_n(\L)$ or, what is essentially the same thing, a holomorphic map
\begin{equation}
\label{map.E.to.P.n-1}
\iota\colon E \; \longrightarrow \; \PP(\Theta_n(\L)^*)
\end{equation}
to the projective space of 1-dimensional subspaces of $\Theta_n(\L)^*$. Since $E$ and $\PP(\Theta_n(\L)^*)$ are smooth projective varieties, $\iota$ is a morphism of algebraic varieties \cite[p.~170]{GH78}.

Since the $\theta_\a$'s are a basis for $\Theta_n(\L)$ they form
a system of homogeneous  coordinate functions on $\PP(\Theta_n(\L)^*)$. With respect to this system of homogeneous coordinates the 
map in \cref{map.E.to.P.n-1} is 
$$
z \; \mapsto \; (\theta_0(z),\ldots,\theta_{n-1}(z)).
$$

Suppose $n\geq 3$. Since the pullback $\iota^{*}\cO(1)$ of the twisting sheaf $\cO(1)$ on $\PP(\Theta_n(\L)^*)$ has degree $n$, \cite[Cor.~IV.3.2]{Hart} implies that $\iota^{*}\cO(1)$ is very ample. Hence $\iota$ is a closed immersion. We will often identify $E$ with its image under $\iota$.
Each linear form on $\PP(\Theta_n(\L)^*)$ vanishes at exactly $n$ points of $E$ counted with multiplicity and the sum of those points is
the image of $\frac{n-1}{2}$ in $E$. Conversely, if $p_1,\ldots,p_n$ are points on $E$ whose sum is the image of $\frac{n-1}{2}$ 
there is a function $f \in \Theta_n(\L)$, unique up to non-zero scalar multiples, that vanishes exactly at $p_{1},\ldots,p_{n}$ modulo $\Lambda$, counted with multiplicity.

Since $\Theta_n(\L)$ is a representation of $H_n$, its dual  $\Theta_n(\L)^*$ becomes a representation of $H_n$ with respect
to the contragredient action $(g\cdot \varphi)(f) = \varphi(g^{-1}\cdot f)$ for $g \in H_n$, $\varphi \in \Theta_n(\L)^*$, and $f \in \Theta_n(\L)$. 
Thus $H_n$ acts as linear automorphisms of $\PP\big(\Theta_n(\L)^*\big)$. For example, if $z \in E$, then 
$$
S\cdot (\theta_0(z),\ldots, \theta_{n-1}(z)) \;=\; \big(\theta_0\big(z-\tfrac{1}{n}\big),\ldots, \theta_{n-1}\big(z-\tfrac{1}{n}\big)\big). 
$$
 Since the commutator $[S,T]$ acts on $\Theta_n(\L)$ as multiplication by $e\big(\frac{1}{n}\big)$, it acts trivially on $\PP\big(\Theta_n(\L)^*\big)$. Thus, the action of $H_n$
factors through the quotient of $H_n$ by the subgroup generated by $[S,T]$. This quotient is isomorphic to $\ZZ_n \times \ZZ_n$.  

\subsection{Another basis for $\Theta_n(\L)$ when $n$ is odd}
\label{subsec.alt.basis}

As we explained in \cref{subsubsec.Kiev.charact.theta.basis}, the characterization of the basis for 
$\Theta_n(\L)$ in the Kiev preprint \cite{FO-Kiev} is not compatible with \cref{defn.R.theta.fn.cond}
and, even after removing condition \cref{kiev.theta.wrong.prop} in  \cref{subsubsec.Kiev.charact.theta.basis}, 
it is only compatible when $n$ is odd, and in that case, the integer  $r$ (modulo $n$), and hence the definition of the basis, coincides with that of \cite{FO89} described in \cref{subsubsec.FO.charact.theta.basis}.

We denote that basis by $\psi_{0}, \ldots, \psi_{n-1}$. Explicitly, we assume that $n$ is odd, and the $\psi_{\alpha}$'s are the functions
in \cref{defn.R.theta.fn.cond} with $r=\frac{n+1}{2}$ (modulo $n$); i.e., 
\begin{equation*}
	\psi_{\a}(z)\;=\; e\left(-\tfrac{\a(n+1)}{2n}\right)\theta_{\a}(z).
\end{equation*}
The bases $\{\theta_\a\}$ and  $\{\psi_\a\}$ coincide if and only if $n=1$.

The transformation properties of the $\{\psi_\a\}$'s are given by \cref{std.basis.theta.func}.

For some purposes the $\psi_\alpha$'s are a ``better''  basis than the $\theta_\alpha$'s. Define $\nu \in \Aut(\bbP^{n-1})$ by
\begin{equation*}
	\nu(x_{0},x_{1},\ldots,x_{n-1}) \; : = \; (x_{0},x_{n-1},\ldots,x_{1})
\end{equation*}
as in \cite[Lem.~3.5]{Fisher}. By property \cref{of.theta.prop.neg1} in \cref{subsubsec.FO.charact.theta.basis}, 
$
\psi_\alpha(-z) = -\,e(-nz)\psi_{-\alpha}(z).
$
The closed immersion $\psi\colon E\to\bbP^{n-1}$ given by $\psi(z)=(\psi_0(z),\ldots,\psi_{n-1}(z))$ therefore fits into the commutative diagram
\begin{equation*}
	\begin{tikzcd}
		E\ar[r,"\psi"]\ar[d,"{[-1]}"'] & \bbP^{n-1}\ar[d,"\nu"] \\
		E\ar[r,"\psi"'] & \bbP^{n-1}
	\end{tikzcd}
\end{equation*}
where $[-1]\colon E\to E$ is the automorphism that sends $z$ to $-z$; i.e., if $\psi(z)=(x_{0},x_{1},\ldots,x_{n-1})$, then 
$\psi(-z)=(x_{0},x_{n-1},\ldots,x_{1})$

The only other places in this paper where the functions $\psi_\a$ appear are  \cref{subsubsec.Kiev.charact.theta.basis,ssect.Kiev.relns.n.odd}.

\section{Definitions and basic properties of $Q_{n,k}(E,\tau)$}
\label{sect.Skl.defn.alg}

From now on, $n>k \ge 1$ are relatively prime integers.

For the remainder of this paper the $\theta_\a$'s are the functions defined in \cref{official.theta_alpha}.

\subsection{The definition of $Q_{n,k}(E,\tau)$ and $\rel_{n,k}(E,\tau)$ when $\tau \notin \frac{1}{n}\L$}
\label{subsec.def.FO.alg}
\leavevmode

Fix $\tau\in\bbC-\frac{1}{n}\Lambda$, and let $V$ be a $\bbC$-vector space with basis $\{x_{i}\;|\;i\in\ZZ_{n}\}$.

\begin{definition}\label{}
$Q_{n,k}(E,\tau)$ is the quotient of the free algebra 
$TV=\CC \langle x_0,\ldots,x_{n-1}\rangle$ by the $n^2$ relations
\begin{equation}\label{the-relns}
r_{ij} \;=\; r_{ij}(\tau)  \; :=\; \sum_{r \in \ZZ_n} \frac{\theta_{j-i+(k-1)r}(0)}{\theta_{j-i-r}(-\tau)\theta_{kr}(\tau)} \,\,  x_{j-r}x_{i+r}, \qquad i,j \in \ZZ_n.	
\end{equation}
The space of quadratic relations is denoted 
\begin{equation*}
\rel_{n,k}(E,\tau) \; :=\; \operatorname{span} \{r_{ij}(\tau) \; | \; i,j \in \ZZ_n\} \;\subseteq\; V \otimes V.
\end{equation*}
\end{definition}

For $\tau\in\frac{1}{n}\Lambda$, $\rel_{n,k}(E,\tau)$ and $Q_{n,k}(E,\tau)$ will be defined in \cref{def.Qnk}.

\subsubsection{}\label{sssec.Rij.def.k}
Although our definition of $Q_{n,k}(E,\tau)$ differs from that in \cite{FO89, OF93, OF95, FO98, OF92}, 
our $Q_{n,k}(E,\tau) =\text{their }Q_{n,k}(E,\tau)$. 

In \cite{FO89, OF93, OF95, FO98, OF92} the term $x_{j-r}x_{i+r}$ in 
\cref{the-relns} is replaced by $x_{k(j-r)}x_{k(i+r)}$. This is just a change of variables: our $x_{i}$ is their $x_{ki}$. 
\cref{prop.isom.k.k'} below shows there is an isomorphism $Q_{n,k}(E,\tau) \to Q_{n,k'}(E,\tau)$ given by $x_i \mapsto x_{ki}$. 
Thus, the algebra we call $Q_{n,k}(E,\tau)$ with ordered basis $x_0,\ldots,x_{n-1}$ is the same as the algebra 
$Q_{n,k'}(E,\tau)$ with ordered basis $x_0,\ldots,x_{n-1}$ in  {\it loc.\ cit.} 

\subsubsection{}
When $k=1$, our relation $r_{ij}=0$ is identical to that at  \cite[(18), p.~1143]{Od-survey}; 
that definition of $Q_{n,1}(E,\tau)$ is used in Odesskii's subsequent papers \cite{OR08, ORTP11a, ORTP11b}. 

\subsubsection{}
\label{ssect.k=1.relns}
Suppose $k=1$. Then $r_{ii}=0$ for all $i$ because $\theta_0(0)=0$. Thus, 
whenever we speak of $r_{ij}$ when $k=1$ we will assume that $i\ne j$. (When $k \ne 1$,
$r_{ij}(\tau)$ is non-zero for all $i,j$ and all $\tau \in \CC-\frac{1}{n}\Lambda$.) 
When $i \ne j$ all the structure constants in $r_{ij}$ have the same numerator so $r_{ij}$ can be replaced by the relation
\begin{equation}
\label{the-relns.2}
\sum_{r \in \ZZ_n} \frac{x_{j-r}x_{i+r} }{\theta_{j-i-r}(-\tau)\theta_{r}(\tau)}  \; = \; 0.
\end{equation}

\subsubsection{Relations for $Q_{n,1}(E,\tau)$ when $n$ is odd}
\label{ssect.Kiev.relns.n.odd}
 In the Kiev preprint \cite[\S3]{FO-Kiev}, $Q_{n,1}(E,\tau)$ is defined for odd $n \ge 3$ as the free algebra
 $\CC\langle x_0,\ldots,x_{n-1}\rangle$ modulo the $n(n-1)$ relations
\begin{equation}
\label{FO.relns}
\frac{x_i^2}{\theta_j(\tau)\theta_{-j}(\tau)}  \, + \,   \frac{x_{i-1}x_{i+1}}{\theta_{1+j}(\tau)\theta_{1-j}(\tau)}  \, + \,  \cdots \, + \,  
\frac{x_{i-(n-1)}x_{i+n-1}}{\theta_{n-1+j}(\tau)\theta_{n-1-j}(\tau)}  \;=\; 0
 \end{equation}
indexed by $(i,j) \in \ZZ_n \times (\ZZ_n-\{0\})$. These relations do not hold in our $Q_{n,1}(E,\tau)$ because our $\theta_\a$'s are not the same as those in \cite{FO-Kiev}. If $n$ is odd and $\omega=e(\frac{1}{n})$, then the relations
\begin{equation}
\label{FO.relns.2}
\frac{x_i^2}{\theta_j(\tau)\theta_{-j}(\tau)}  \, + \, \omega \, \frac{x_{i-1}x_{i+1}}{\theta_{1+j}(\tau)\theta_{1-j}(\tau)}  \, +   \, \cdots\, + \,  
\omega^{n-1} \, \frac{x_{i-(n-1)}x_{i+n-1}}{\theta_{n-1+j}(\tau)\theta_{n-1-j}(\tau)}  \,=\, 0, 
\; \;
(i,j) \in \ZZ_n \times (\ZZ_n-\{0\}),
 \end{equation}
 hold in $Q_{n,1}(E,\tau)$. If  $n$ is odd and $\psi_a(z)=e\big(-\tfrac{\a(n+1)}{2n}\big)\theta_{\a}(z)$, as in  \cref{subsec.alt.basis}, then 
\begin{equation}
\label{FO.relns.3}
\frac{x_i^2}{\psi_j(\tau)\psi_{-j}(\tau)}  \, + \,   \frac{x_{i-1}x_{i+1}}{\psi_{1+j}(\tau)\psi_{1-j}(\tau)}  \, + \,  \cdots \, + \,  
\frac{x_{i-(n-1)}x_{i+n-1}}{\psi_{n-1+j}(\tau)\psi_{n-1-j}(\tau)}  \;=\; 0
 \end{equation}
 in $Q_{n,1}(E,\tau)$. It is likely that the $\theta_\a$'s in the Kiev preprint (for $n$ odd) are the $\psi_\a$'s.

\cref{prop.new.rel} provides relations for $Q_{n,k}(E,\tau)$ that are similar to those in \cref{FO.relns.2} in the sense that the indices on the 
$x$'s involve $i$ but not $j$ and the indices on the $\theta_\a$'s involve $j$ but not $i$.

\subsection{Extending the definition of $Q_{n,k}(E,\tau)$ to all $\tau \in \CC$ when $k=1$}
\label{subsec.k=1.defns}
Feigin and Odesskii provide three ways to extend the definition of $Q_{n,1}(E,\tau)$ to all $\tau \in \CC$.
The results in this section are theirs: we make some of their implicit statements explicit, 
fill in some details, and explain some incompatibilities between their conventions.

\subsubsection{Conventions}
\label{ssect.symalt.conv}

If $W$ is a finite dimensional $\CC$-vector space we will write $\Sym^{d}W$ and $\Alt^d W$ for the subspaces of $W^{\otimes d}$ 
on which the symmetric group of order $d!$ acts via the trivial and sign representations, respectively.

Let $W$ be a finite dimensional $\CC$-vector space of $\CC$-valued functions on a set $X$. We adopt the convention that 
$W^{\otimes d}$ acts as functions on $X^d$ by $(w_1 \otimes \cdots \otimes w_d)(x_1,\ldots,x_d)=w_1(x_1)\cdots w_d(x_d)$.
Thus, $\Sym^{d}W$ (resp., $\Alt^d W$) consists of symmetric (resp., anti- or skew-symmetric) functions $X^d \to \CC$.

\begin{lemma}
\label{lem.theta.tensor}
Fix $c\in\CC$. By the above convention, $\Theta_{n,c}(\Lambda)^{\otimes d}$ is identified with the space of holomorphic functions $f$ on $\CC^{d}$ such that $f$ is a function in $\Theta_{n,c}(\Lambda)$ in each variable. $\Sym^{d}\Theta_{n,c}(\Lambda)$ and $\Alt^{d}\Theta_{n,c}(\Lambda)$
are identified with those functions that are symmetric and anti-symmetric, respectively.
\end{lemma}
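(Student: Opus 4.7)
The plan is to define the natural linear map
\[
\phi\colon \Theta_{n,c}(\Lambda)^{\otimes d} \;\longrightarrow\; \{\text{holomorphic functions } \CC^d \to \CC\}, \qquad \phi(f_1 \otimes \cdots \otimes f_d)(z_1,\ldots,z_d) \;=\; \prod_{i=1}^d f_i(z_i),
\]
extended linearly, and to show that $\phi$ is an isomorphism onto the subspace $F$ of holomorphic functions on $\CC^d$ that lie in $\Theta_{n,c}(\Lambda)$ as a function of each variable separately. Landing in $F$ is automatic since a product of quasi-periodic functions inherits the one-variable quasi-periodicity in each factor.

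For injectivity, I would use the basis $\theta_{0,c},\ldots,\theta_{n-1,c}$ of $\Theta_{n,c}(\Lambda)$ from \cref{theta.basis.c} to produce a basis $\{\theta_{\alpha_1,c}\otimes\cdots\otimes\theta_{\alpha_d,c}\;|\;(\alpha_1,\ldots,\alpha_d)\in\ZZ_n^d\}$ of $\Theta_{n,c}(\Lambda)^{\otimes d}$. Let $S_i$ denote the shift $z_i\mapsto z_i+\tfrac{1}{n}$ on holomorphic functions on $\CC^d$; these operators pairwise commute and preserve $F$. By \cref{theta.basis.c}\cref{item.theta.basis.c.S}, $\phi(\theta_{\alpha_1,c}\otimes\cdots\otimes\theta_{\alpha_d,c})$ is a joint eigenvector of $(S_1,\ldots,S_d)$ with eigenvalue tuple $(e(\alpha_1/n),\ldots,e(\alpha_d/n))$, and these $n^d$ tuples are pairwise distinct, so the images are linearly independent.

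Surjectivity is the substantive step. Each $S_i$ has order $n$ on $F$ and the $S_i$ commute, so $F$ decomposes as a direct sum of joint eigenspaces $F_{\alpha_1,\ldots,\alpha_d}$ indexed by $\ZZ_n^d$; it suffices to place each summand inside the image. Given $f\in F_{\alpha_1,\ldots,\alpha_d}$ and fixed $(z_2,\ldots,z_d)$, the slice $z_1\mapsto f(z_1,z_2,\ldots,z_d)$ lies in the $e(\alpha_1/n)$-eigenspace of $S$ acting on $\Theta_{n,c}(\Lambda)$, which is the line $\CC\cdot\theta_{\alpha_1,c}$ by \cref{theta.basis.c}. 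Hence $f(z_1,\ldots,z_d)=g(z_2,\ldots,z_d)\,\theta_{\alpha_1,c}(z_1)$ for some scalar-valued $g$, and dividing at any $z_1$ with $\theta_{\alpha_1,c}(z_1)\ne 0$ shows that $g$ is holomorphic on $\CC^{d-1}$. Because $\theta_{\alpha_1,c}(z_1)$ is independent of $z_2,\ldots,z_d$, the quasi-periodicity of $f$ in those variables descends verbatim to $g$, placing $g$ in the analogous space on $\CC^{d-1}$, and induction on $d$ (with $d=1$ being the definition of $\Theta_{n,c}(\Lambda)$) realises $f$ as a scalar multiple of $\theta_{\alpha_1,c}(z_1)\cdots\theta_{\alpha_d,c}(z_d)$.

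Finally, the symmetric group acts on $\Theta_{n,c}(\Lambda)^{\otimes d}$ by permuting tensor factors and on the space of holomorphic functions on $\CC^d$ by permuting variables, and $\phi$ is manifestly equivariant on simple tensors. Passing to invariants and sign-isotypic components thus identifies $\Sym^d\Theta_{n,c}(\Lambda)$ and $\Alt^d\Theta_{n,c}(\Lambda)$ with the symmetric and anti-symmetric elements of $F$ respectively. I expect the main obstacle to be surjectivity, specifically checking that the ``residual'' function $g$ inherits enough quasi-periodicity to close the induction; however, once the shift-eigenspace decomposition is in place, this reduces to a direct comparison of quasi-periodicity data on both sides.
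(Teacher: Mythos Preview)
Your proof is correct and shares the paper's overall inductive strategy: both arguments peel off one variable at a time by expanding in the basis $\{\theta_{\alpha,c}\}$ and then observe that the coefficient functions inherit holomorphicity and quasi-periodicity. The difference lies in how the coefficient functions are extracted. The paper expands $f$ in the full basis in the last variable, obtaining coefficient functions $\rho_\alpha(z_1,\ldots,z_{d-1})$ for all $\alpha\in\ZZ_n$ simultaneously, and then produces each $\rho_\beta$ explicitly as a finite linear combination $\sum_i t_i f(z_1,\ldots,z_{d-1},x_i)$ by choosing points and scalars with $\sum_i t_i\theta_{\alpha,c}(x_i)=\delta_{\alpha\beta}$; this dual-basis trick makes holomorphicity of $\rho_\beta$ immediate. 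You instead decompose $F$ into joint eigenspaces for the commuting shifts $S_1,\ldots,S_d$ before inducting, which forces each slice to land in a one-dimensional eigenspace of $\Theta_{n,c}(\Lambda)$; the coefficient is then obtained by a single division at a nonvanishing point, avoiding the dual-basis construction altogether. Your route is slightly cleaner at the cost of the preliminary eigenspace decomposition, while the paper's route handles all $\alpha$ at once without that decomposition but needs the extra linear-algebra step to recover holomorphicity.
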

\begin{proof}
Let $f:\CC^d \to \CC$ be a holomorphic function that is a function in $\Theta_{n,c}(\Lambda)$ in each variable. Since the function $z_{d}\mapsto f(z_{1},\ldots,z_{d})$ belongs to $\Theta_{n,c}(\Lambda)$ for each $(z_{1},\ldots,z_{d-1}) \in\CC^{d-1}$, there are unique functions 
$\rho_{\a}\colon\CC^{d-1}\to\CC$, $\a\in\ZZ_{n}$, such that
\begin{equation*}
	f(z_{1},\ldots,z_{d})\;=\;\sum_{\a\in\ZZ_{n}}\rho_{\a}(z_{1},\ldots,z_{d-1})\theta_{\a,c}(z_{d})
\end{equation*}
as functions of $z_{d}$, where $\{\theta_{\alpha,c}\mid\alpha\in\ZZ_{n}\}$ is the basis for $\Theta_{n,c}(\Lambda)$ defined in \cref{theta.basis.c}. The quasi-periodicity of $f$ with respect to $z_{1},\ldots,z_{d-1}$ implies that the $\rho_{\a}$'s have the same quasi-periodicity properties.

We will now show that $\rho_{\a}$ is a holomorphic function. Since $\{\theta_{0,c}, \ldots, \theta_{n-1,c}\}$ is linearly independent,
\begin{equation*}
	\operatorname{span}\{(\theta_{0,c}(z), \ldots , \theta_{n-1,c}(z) )  \mid z\in\CC\}\;=\;\CC^{n}.
\end{equation*}
Thus, for fixed $\b\in\ZZ_{n}$, there is a finite set of points $(t_{i},x_i)\in\CC^2$ such that $\sum_{i}t_{i}\theta_{\a,c}(x_{i})=\delta_{\a,\b}$, the Kronecker delta. Hence
\begin{align*}
	\rho_{\b}(z_{1},\ldots,z_{d-1})
	&\;=\;\sum_{\a\in\ZZ_{n}}\rho_{\a}(z_{1},\ldots,z_{d-1})\delta_{\a,\b}\\
	&\;=\;\sum_{\a\in\ZZ_{n}}\sum_{i}\rho_{\a}(z_{1},\ldots,z_{d-1})t_{i}\theta_{\a,c}(x_{i})\\
	&\;=\;\sum_{i}t_{i}f(z_{1},\ldots,z_{d-1},x_{i})
\end{align*}
is a holomorphic function on $\CC^{d-1}$. Therefore $\rho_{\a}$ is a function in $\Theta_{n,c}(\Lambda)$ in each variable.

Applying this procedure to $\rho_{\a}$ inductively, we deduce that $f$ is a linear combination of the functions of the form $\theta_{\a_{1}}\otimes\cdots\otimes\theta_{\a_{d}}$, and the uniqueness of $\rho_{\a}$ in each step implies that the coefficients of $\theta_{\a_{1}}\otimes\cdots\otimes\theta_{\a_{d}}$'s are unique. This proves the first statement. The second statement follows.
\end{proof}

\subsubsection{Definition of $Q_{n,k}(E,\tau)$ via an ``elliptic'' shuffle product}
\label{ssect.*.product}

The symmetric algebra $SV:=TV/(\Alt^2 V)$ is naturally isomorphic as a graded $\CC$-algebra to 
$$
\Sym V \; := \; \bigoplus_{d=0}^\infty \Sym^d V  \; \subseteq \; TV
$$ 
when $\Sym V$ is endowed with the shuffle product. Feigin and Odesskii proved an ``elliptic'' analogue of this result.
We now follow \cite[\S2]{Od-survey} and \cite[\S1]{FO2001} with some small changes that we will comment on later.

Let $c=\frac{n-1}{2}$. For $d \ge 1$, by \cref{lem.theta.tensor}, the space $\Sym^d  \Theta_{n,c+(1-d)n\tau}(\Lambda)$ is identified with the space of symmetric holomorphic 
functions $f(z_1,\ldots,z_d)$ on $\CC^d$ such that
 \begin{align*}
 f(z_1+1,z_2,\ldots,z_d) & \;=\;  f(z_1,z_2,\ldots,z_d)
 \\
  f(z_1+\eta,z_2,\ldots,z_d) & \;=\;  e(-nz_1+ c+(1-d)n\tau+\tfrac{n}{2}) \, f(z_1,z_2,\ldots,z_d).
\end{align*}
with the convention $\Sym^0=\CC$.
We now define the graded vector space
$$
F\;=\;F_{n}(E,\tau) \; :=\; \bigoplus_{d= 0}^\infty  \Sym^d  \Theta_{n,c+(1-d)n\tau}(\Lambda).
$$
Note that $F_0=\CC$ and $F_1=\Theta_{n,c}(\Lambda)=\Theta_{n}(\Lambda)$.
Since $\dim \Theta_{m,c}(\Lambda)=m$ for all $c \in \CC$ and all $m\geq 1$, 
$\dim F_d= \binom{n+d-1}{d}$, 
which is the same as the dimension of the degree-$d$ component of the polynomial ring on $n$ variables.

For $\a,\b \in \ZZ_{\geq 0}$, let $S_{\a+\b}$ denote the group of permutations of $\{1,\ldots,\a+\b\}$ and define
$$
S_{\a|\b}    \; :=\; \{ \s \in S_{\a+\b} \; | \; \s(1)<\cdots <\s(\a) \text{ and }\s(\a+1)<\cdots <\s(\a+\b)\}.
$$ 

\begin{proposition}
\cite[p.~1137 and Prop.~10, p.~1142]{Od-survey}\footnote{See also \cite[Prop., p.~37]{FO98}.}
\label{prop.10.survey}
\label{prop.Q_n.maps.to.F_n} 
The space $F_{n}(E,\tau)$ is a graded $\CC$-algebra with respect to the multiplication $*$ defined as follows:
if $f \in F_\a$ and $g \in F_\b$, then 
\begin{align}
&(f*g)(z_1,\ldots,z_{\a+\b})\\
\label{eq.def.star.first}&\qquad\; :=\; \frac{1}{\a!\b!}\sum_{\s \in S_{\a+\b}}c_{\a,\b,\s}(\sfz) 
f(z_{\s(1)},\ldots,z_{\s(\a)}) g(z_{\s(\a+1)}+2\a\tau,\ldots,z_{\s(\a+\b)}+2\a\tau)\\
&\qquad\; \phantom{:}=\; \sum_{\s \in S_{\a|\b}}c_{\a,\b,\s}(\sfz) 
f(z_{\s(1)},\ldots,z_{\s(\a)}) g(z_{\s(\a+1)}+2\a\tau,\ldots,z_{\s(\a+\b)}+2\a\tau)
\end{align}
where
$$
c_{\a,\b,\s}(\sfz)\;=\; c_{\a,\b,\s}(z_1,\ldots,z_{\a+\b}) \;=\; \prod_{\substack{1 \le i \le \a  \\ \a+1 \le   j \le \a+\b   }} \frac{\theta(z_{\s(i)} - z_{\s(j)} +n \tau)}{\theta(z_{\s(i)} - z_{\s(j)} )}.
$$
If $\tau \notin \frac{1}{n}\Lambda$,  the map $x_i \mapsto \theta_i$ extends to a homomorphism of graded $\CC$-algebras,  
$Q_{n,1}(E,\tau) \to F_n(E,\tau) $. 
\end{proposition}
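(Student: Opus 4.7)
The plan is to prove the proposition in two stages: first that $(F_n(E,\tau),*)$ is a well-defined graded $\CC$-algebra, then that the map $x_i\mapsto\theta_i$ sends every defining relation $r_{ij}(\tau)$ of $Q_{n,1}(E,\tau)$ to $0$ in $F_2$.

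For the first stage, one checks three things about $f*g$ in order to place it in $\Sym^{\a+\b}\Theta_{n,c+(1-\a-\b)n\tau}(\Lambda)$ with $c=(n-1)/2$. Symmetry is built into the first expression for $f*g$ by the $\frac{1}{\a!\b!}\sum_{\sigma\in S_{\a+\b}}$ averaging, and the reduction to the shuffle sum over $S_{\a|\b}$ uses only the individual symmetry of $f$ and $g$ in their respective blocks of arguments. Quasi-periodicity in each $z_i$ follows from the fact that every factor of $c_{\a,\b,\sigma}$ depending on $z_i$ has the form $\theta(z_i-z_\bullet+n\tau)/\theta(z_i-z_\bullet)$, which is $1$-periodic and picks up a factor $e(\mp n\tau)$ under $z_i\mapsto z_i+\eta$ by \cref{lem.riemann.theta.prop}\cref{item.lem.riemann.theta.per}; combined with the order-$n$ quasi-periodicity of $f$ or $g$ in $z_i$, this produces the desired shifted constant $c+(1-\a-\b)n\tau$. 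Holomorphy across the diagonals $z_p=z_q$ is verified by pairing each shuffle $\sigma$ placing $p,q$ in opposite blocks with $\sigma\circ(p\,q)$ and observing that the residues of the two summands at $z_p=z_q$ are negatives of each other while the $f$- and $g$-factors agree there by symmetry. Associativity $(f*g)*h=f*(g*h)$ then reduces to a term-by-term factorization of a product of ratios $\theta(z_i-z_j+n\tau)/\theta(z_i-z_j)$ over pairs of indices in distinct blocks, together with the evident bijection between triple shuffles and iterated double shuffles.

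For the second stage, $r_{ii}(\tau)$ maps to $0$ because $\theta_0(0)=0$ by \cref{prop.official.theta.basis}\cref{item.official.theta.basis.zeros}, so one may assume $i\ne j$. Using the two-variable formula
\[
(\theta_\a*\theta_\b)(z_1,z_2)\;=\;\frac{\theta(z_1-z_2+n\tau)}{\theta(z_1-z_2)}\theta_\a(z_1)\theta_\b(z_2+2\tau)\;+\;\frac{\theta(z_2-z_1+n\tau)}{\theta(z_2-z_1)}\theta_\a(z_2)\theta_\b(z_1+2\tau)
\]
and setting $A(u,v):=\sum_{r\in\ZZ_n}\theta_{j-r}(u)\theta_{i+r}(v+2\tau)/\bigl(\theta_{j-i-r}(-\tau)\theta_r(\tau)\bigr)$, the vanishing of the image of $r_{ij}$ reduces to proving the elliptic identity
\[
A(z_1,z_2)\;+\;\frac{\theta(z_2-z_1+n\tau)\,\theta(z_1-z_2)}{\theta(z_1-z_2+n\tau)\,\theta(z_2-z_1)}\,A(z_2,z_1)\;=\;0.
\]
I would verify this by comparing both sides as sections of the same line bundle on $E\times E$: their quasi-periodicities in $(z_1,z_2)$ match by \cref{prop.official.theta.basis}, their divisors of zeros can be tracked with \cref{prop.official.theta.basis}\cref{item.official.theta.basis.zeros} to confirm that the relevant space of sections is one-dimensional, and the scalar relating them is pinned down by specializing to a convenient point (for example $z_2=-z_1$, where $A(z_1,-z_1)$ simplifies using the reflection formula of \cref{prop.official.theta.basis}\cref{item.official.theta.basis.minus}).

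The main obstacle is the final identity. Everything else is routine bookkeeping with the transformation laws of \cref{lem.riemann.theta.prop} and \cref{prop.official.theta.basis}, but the identity itself is the essential content that makes the $Q_{n,1}$-relations compatible with $*$; its proof demands careful tracking of quasi-periodicities and zero-divisors on $E\times E$ together with a well-chosen specialization to fix the proportionality constant, and any such specialization must avoid all zeros of the denominators $\theta_{j-i-r}(-\tau)\theta_r(\tau)$, which is what forces the assumption $\tau\notin\tfrac{1}{n}\Lambda$.
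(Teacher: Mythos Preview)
Your first stage is essentially what the paper does (it simply cites \cite[Prop.~5]{Od-survey} for holomorphy and calls associativity ``straightforward''), so your more explicit outline is fine and correct.

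The second stage is where your approach diverges from the paper's and where there is a genuine gap. You correctly reduce to the identity
\[
A(z_1,z_2)\;+\;\frac{\theta(z_2-z_1+n\tau)\,\theta(z_1-z_2)}{\theta(z_1-z_2+n\tau)\,\theta(z_2-z_1)}\,A(z_2,z_1)\;=\;0,
\]
but your proposed proof of it does not go through as written. The function $A(z_1,z_2)$ lives in $\Theta_n(\Lambda)\otimes\Theta_n(\Lambda)$ (after absorbing the $2\tau$-shift), a space of dimension $n^2$, so the assertion that ``the relevant space of sections is one-dimensional'' is false without first imposing $n^2-1$ independent vanishing conditions on $A$. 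You have not identified any zeros of $A$ beyond those coming from the individual $\theta_\alpha$ factors, and those are not nearly enough. The specialization you suggest ($z_2=-z_1$) would at best fix a proportionality constant \emph{after} you have already shown the two sides are proportional, which is precisely the hard step.

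The paper avoids this difficulty entirely by invoking a known closed-form identity for $A$: equation~(30) of \cite{Od-survey} (also \cite[Cor.~5.10]{CKS2}) says that, up to an explicit nonzero scalar,
\[
A(x,y)\;=\;\frac{\theta(-n\tau+x-y)}{\theta(x-y)}\bigl(\theta_i(x+\tau)\theta_j(y+\tau)-\theta_i(y+\tau)\theta_j(x+\tau)\bigr).
\]
Once you have this, the required antisymmetry is immediate from $\theta(-z)=-e(-z)\theta(z)$: the prefactor $\frac{\theta(x-y+n\tau)}{\theta(x-y)}\cdot\frac{\theta(-n\tau+x-y)}{\theta(x-y)}$ is invariant under $x\leftrightarrow y$, while the bracketed factor is antisymmetric. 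So the real content is identity~(30), not a divisor argument on $E\times E$; your proposal would need either to cite that identity or to reproduce its proof (which itself is a one-variable theta-function argument, not a two-variable section count).
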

\begin{proof}
It is proved in \cite[Prop.~5, p.~1137]{Od-survey} that $f*g$ is holomorphic on $\CC^{\a+\b}$.
A straightforward computation shows that $*$ is associative. 

To prove that  the map $x_i \mapsto \theta_i$ extends to a homomorphism we must show that 
\begin{equation}
\label{eq:reln.maps.to.0.in.F_n}
\sum_{r\in\ZZ_n}\frac{1}{\theta_{j-i-r}(-\tau)\theta_{r}(\tau)} \, \big( \theta_{j-r} * \theta_{i+r}\big)(x,y) \;=\; 0
\end{equation}
for all $i,j\in \ZZ_n$ and all $(x,y) \in \CC^2$. If $f,g \in F_1= \Theta_n(\Lambda)$, then
\begin{equation}
\label{new.defn.f*g}
(f*g)(x,y) \;=\; f(x)g(y+2\tau) \, \frac{\theta(x-y+n\tau)}{\theta(x-y)} \, + \, f(y)g(x+2\tau) \, \frac{\theta(y-x+n\tau)}{\theta(y-x)}\, 
\end{equation}
so we must show that
\begin{align*}
\sum_{r \in \ZZ_n} &\, \frac{1}{\theta_{j-i-r}(-\tau)\theta_r(\tau)} \; \times \;
\\
&
\Bigg(  \frac{\theta(x-y+n\tau)}{\theta(x-y)} \, \theta_{j-r} (x)\theta_{i+r} (y+2\tau) \; + \;
 \frac{  \theta(y-x+n\tau)}{\theta(y-x)}\  \, \theta_{j-r} (y)\theta_{i+r} (x+2\tau) 
\Bigg)
\; = \; 0.
\end{align*}
After changing notation, equation (30) in \cite{Od-survey} (see also \cite[Cor.~5.10]{CKS2}) says that
\begin{align}
	&\frac{\theta(-n\tau+x-y)}{\theta(x-y)} \, \Big(\theta_{i}(x+\tau)\theta_{j}(y+\tau)-\theta_{i}(y+\tau) \theta_{j}(x+\tau)\Big)
	\label{42905872}\\
	& \qquad \; =\;  d \, \sum_{r\in\ZZ_n}\frac{1}{\theta_{j-i-r}(-\tau)\theta_{r}(\tau)} \, \theta_{j-r}(x)\theta_{i+r}(y+2\tau)
	\label{42905873}
\end{align}
where
$
d = \tfrac{1}{n}\theta(\tfrac{1}{n})\cdots\theta(\tfrac{n-1}{n})\, \theta(-n\tau) \theta_{j-i}(0).
$
So we must show that
$\text{\cref{eq:first.half}}+\text{\cref{eq:second.half}}=0$ where 
\begin{equation}
\label{eq:first.half}
 \frac{\theta(x-y+n\tau)}{\theta(x-y)} \, d^{-1} \,  \frac{\theta(-n\tau+x-y)}{\theta(x-y)} \, \big(\theta_{i}(x+\tau)\theta_{j}(y+\tau)-\theta_{i}(y+\tau) \theta_{j}(x+\tau)\big)
 \end{equation}
and 
\begin{equation}
\label{eq:second.half}
 \frac{\theta(y-x+n\tau)}{\theta(y-x)} \, d^{-1} \,   \frac{\theta(-n\tau+y-x)}{\theta(y-x)} \, \big(\theta_{i}(y+\tau)\theta_{j}(x+\tau)-\theta_{i}(x+\tau) \theta_{j}(y+\tau)\big).
 \end{equation}
However, $\theta(-z)=-e(-z)\theta(z)$, so
$$
 \frac{\theta(x-y+n\tau)}{\theta(x-y)} \, d^{-1} \,    \frac{\theta(-n\tau+x-y)}{\theta(x-y)}  \;=\; 
 \frac{\theta(y-x+n\tau)}{\theta(y-x)} \, d^{-1} \,    \frac{\theta(-n\tau+y-x)}{\theta(y-x)}.
$$
It follows that $\text{\cref{eq:first.half}}+\text{\cref{eq:second.half}}=0$.
 \end{proof}
 
\cref{prop.10.survey} should be compared to Proposition 10 in Odesskii's survey \cite[p.~1142]{Od-survey}
which says that the map $x_i \mapsto \theta_i$ extends to an algebra isomorphism
$Q_{n,1}(E,\tau)  \to   F_n(E,-\tau)$; this is not correct---the last sentence on p.~1142 is not true. Indeed, when $z_1-z_2=n\tau$, that sentence (with $\eta$ replaced by $\tau$) and $\text{\cref{42905872}}=\text{\cref{42905873}}$ imply that
$\theta_i(z_1-n\tau+\tau) \theta_j(z_1-3\tau)-\theta_i(z_1-3\tau) \theta_j(z_1-n\tau+\tau)=0$ for all $i,j,z_1,\tau$.
However, 
if $z_1=3\tau-\frac{i}{n}\eta$, then $ \theta_i(z_1-3\tau)=0$ so that we obtain $ \theta_i(3\tau-\frac{i}{n}\eta-n\tau+\tau) \theta_j(-\frac{i}{n}\eta) =0$ for all $i,j,\tau$; this is clearly false. A corrected version of Proposition 10 would say that the map $x_{-i} \mapsto \theta_i$ 
extends to an algebra homomorphism $Q_{n,1}(E,\tau) \to F_n(E,-\tau)$ (by \cref{prop.anti.isom}). We have not been able to verify whether this is an 
isomorphism when $\tau \notin \frac{1}{n}\Lambda$. For example, to show this map is surjective one would have to show that 
$F_n(E,\tau)$ is generated by its degree-one component and we have not been able to verify that.

Since $c_{\a,\b,\sigma}(\sfz)=1$ when $\tau=0$, the multiplication on $F_n(E,0)=\Sym\Theta_{n}(\Lambda)$ is the usual shuffle product.

\subsubsection{A definition of $\rel_{n,1}(E,\tau)$ as a space of the holomorphic functions on $\CC^2$}
\label{ssect.defn.using.Os.identity}
This subsection makes no use of the material in \cref{ssect.*.product}. 

In this subsection we identify the degree-one component of  $Q_{n,1}(E,\tau)$ with $\Theta_{n}(\Lambda)$ 
via $x_i \leftrightarrow \theta_i$. With this convention, $ \rel_{n,1}(E,\tau)$ is a subspace of $\Theta_{n}(\Lambda)^{\otimes 2}$ and 
elements of $\rel_{n,1}(E,\tau)$ are holomorphic functions $\CC^2 \to \CC$ (see the convention in \cref{ssect.symalt.conv}).

\begin{proposition}
\label{prop.FO}
Assume $\tau \notin \frac{1}{n}\Lambda$. 
The map 
$$
\psi:\Alt^2\Theta_{n}(\Lambda)\; \longrightarrow \;  \rel_{n,1}(E,\tau)
$$
given by
\begin{equation}\label{eq.def.psi}
	\psi(f)(x,y)\;:=\;\frac{\theta(x-y+(2-n)\tau)}{\theta(x-y+2\tau)}f(x+\tau,y-\tau)
\end{equation}
is an isomorphism of vector spaces.  Therefore 
$$
\dim \rel_{n,1}(E,\tau) \;=\; \tbinom{n}{2}.
$$
\end{proposition}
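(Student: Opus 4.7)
The plan is to verify that $\psi$ is a well-defined linear map into $\Theta_{n}(\Lambda)^{\otimes 2}$, evaluate it on an anti-symmetric basis to recognize its image as $\rel_{n,1}(E,\tau)$, and then deduce injectivity. An isomorphism of vector spaces of dimension $\binom{n}{2}$ follows.

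Step 1 (well-definedness). I would first show that $\psi(f)$ extends to a holomorphic function on $\CC^{2}$. The prefactor $\theta(x-y+(2-n)\tau)/\theta(x-y+2\tau)$ has poles exactly along $x-y+2\tau\in\Lambda$, i.e.\ along $(x+\tau)-(y-\tau)\in\Lambda$. Because $f\in\Alt^{2}\Theta_{n}(\Lambda)$ vanishes on the diagonal $u=v$ and the multipliers in the quasi-periodicity of $f$ are nowhere vanishing, $f(u,v)=0$ whenever $u-v\in\Lambda$, so those zeros of $f(x+\tau,y-\tau)$ cancel the poles. A direct check using $\theta(z+\eta)=-e(-z)\theta(z)$ and the quasi-periodicity of $f$ in each slot then yields $\psi(f)(x+\eta,y)=-e(-nx)\psi(f)(x,y)$ and $\psi(f)(x,y+\eta)=-e(-ny)\psi(f)(x,y)$, with invariance under $x\mapsto x+1$ and $y\mapsto y+1$. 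Hence $\psi(f)\in\Theta_{n}(\Lambda)^{\otimes 2}$ by \cref{lem.theta.tensor}.

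Step 2 (image). Next, I would evaluate $\psi$ on the basis $\{\theta_{i}\wedge\theta_{j}\mid 0\leq i<j\leq n-1\}$ of $\Alt^{2}\Theta_{n}(\Lambda)$, where $\theta_{i}\wedge\theta_{j}:=\theta_{i}\otimes\theta_{j}-\theta_{j}\otimes\theta_{i}$. Substituting $y\mapsto y-2\tau$ into the identity $\text{\cref{42905872}}=\text{\cref{42905873}}$ already used in the proof of \cref{prop.10.survey} produces
\begin{equation*}
    \psi(\theta_{i}\wedge\theta_{j})\;=\;d_{ij}\cdot r_{ij},\qquad d_{ij}\;=\;\tfrac{1}{n}\,\theta(\tfrac{1}{n})\cdots\theta(\tfrac{n-1}{n})\,\theta(-n\tau)\,\theta_{j-i}(0).
\end{equation*}
Under the hypothesis $\tau\notin\tfrac{1}{n}\Lambda$ one has $\theta(-n\tau)\neq 0$; for $i\neq j$, $\theta_{j-i}(0)\neq 0$ by \cref{item.official.theta.basis.zeros}; and each $\theta(\tfrac{m}{n})$ with $1\leq m\leq n-1$ is nonzero. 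Thus $d_{ij}\neq 0$, and the image of $\psi$ contains every generator $r_{ij}$ of $\rel_{n,1}(E,\tau)$, so $\psi$ surjects onto $\rel_{n,1}(E,\tau)$.

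Step 3 (injectivity and conclusion). Injectivity is essentially free: if $\psi(f)\equiv 0$, then since the prefactor is a nonzero meromorphic function, $f(x+\tau,y-\tau)\equiv 0$, hence $f=0$. Combining with the surjectivity of Step 2 yields the isomorphism, and therefore $\dim\rel_{n,1}(E,\tau)=\dim\Alt^{2}\Theta_{n}(\Lambda)=\binom{n}{2}$.

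The main obstacle is the bookkeeping in Step 1, where one must carefully track the theta-multipliers acquired by both the prefactor and by $f$ under translation by $\eta$ in each variable and check that they combine to the quasi-periodicity factor of $\Theta_{n}(\Lambda)$. Once the two pieces match, the remaining arguments are essentially a repackaging of the identity $\text{\cref{42905872}}=\text{\cref{42905873}}$ that has already appeared.
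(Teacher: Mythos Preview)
Your proof is correct and follows essentially the same route as the paper: both arguments hinge on applying the identity $\text{\cref{42905872}}=\text{\cref{42905873}}$ (after the shift $y\mapsto y-2\tau$) to compute $\psi$ on the basis $\theta_i\wedge\theta_j$ and identify the result with a nonzero multiple of $r_{ij}$, with injectivity following from the invertibility of the meromorphic prefactor. The paper packages this slightly more economically by extending $\psi$ to an automorphism $\Psi$ of the field of meromorphic functions on $\CC^2$, so that your Step~1 becomes unnecessary (well-definedness is read off directly from the identity) and injectivity is automatic; your explicit constant $d_{ij}$ carries an extra factor of $\theta_{j-i}(0)$ compared to the paper's normalization, but this is harmless since it is nonzero for $i\neq j$.
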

\begin{proof}
Since $\dim \Theta_{n}(\Lambda)=n$, the dimension of $\Alt^2\Theta_{n}(\Lambda)$ 
is $n \choose 2$. Thus the final conclusion of the proposition follows from the first.

Let $\Psi$ be the automorphism of the field of $\CC$-valued meromorphic functions on $\CC^2$ defined by the same formula as \cref{eq.def.psi}. Since $\psi$ is a restriction of $\Psi$, it suffices to show that $\Psi(\Alt^{2}\Theta_{n}(\Lambda))=\rel_{n,1}(E,\tau)$.

Since the $\theta_\a$'s form a basis for $\Theta_n(\Lambda)$, the domain $\Alt^{2}\Theta_{n}(\Lambda)$ of $\psi$ is the linear span of the functions
\begin{equation}
\label{eq:holom.fn}
f_{i,j}(x,y) \; :=\; \theta_{i}(x)\theta_{j}(y)-\theta_{i}(y) \theta_{j}(x),
\qquad i,j\in\ZZ_{n}.
\end{equation}
Define 
\begin{equation}
\label{half.complicated.identity}
h_{i,j}(x,y) \; := \; 
d \, 
 \sum_{r\in\ZZ_n}\frac{1}{\theta_{j-i-r}(-\tau)\theta_{r}(\tau)}
 \theta_{j-r}(x)\theta_{i+r}(y)
\end{equation}
where $d =  \tfrac{1}{n}\theta(\tfrac{1}{n})\cdots\theta(\tfrac{n-1}{n}) \theta(-n\tau)\theta_{j-i}(0)$.
Because we are identifying $V$ with $\Theta_n(\Lambda)$ via $x_i \leftrightarrow \theta_i$, 
$$
V^{\otimes 2} \ni r_{i,j} \, \longleftrightarrow \,  d^{-1}\theta_{j-i}(0) h_{i,j} \in   \Theta_n(\Lambda)^{\otimes 2}.
$$ 
Thus, $\rel_{n,1}(E,\tau)=\operatorname{span}\{h_{i,j} \; | \; (i,j) \in \ZZ_{n}^2\}$. 

The identity $\text{\cref{42905872}}=\text{\cref{42905873}}$ says that $\Psi(f_{i,j})(x,y+2\tau)=h_{i,j}(x,y+2\tau)$ so $\Psi(f_{i,j})=h_{i,j}$. Therefore $\Psi(\Alt^{2}\Theta_{n}(\Lambda))=\rel_{n,1}(E,\tau)$.
\end{proof}

In \cref{ssect.cf.TVdB}, we describe the relation between \cref{prop.FO} and the description of $\rel_{n,1}(E,\tau)$ that is used in Tate and 
Van den Bergh's paper \cite{TvdB96}.

If we view $h_{i,j}(x,y)$  as a meromorphic function of $(x,y,\tau)\in\CC^3$, then the singularities at $\CC^2 \times \frac{1}{n}\Lambda$
are removable.

\begin{lemma}\label{lem.hij.ext}
Fix $(i,j) \in \ZZ_n^2$. The function $h_{i,j}(x,y)$, viewed as a function of $(x,y,\tau)$ defined 
on $\CC^2 \times (\CC-\frac{1}{n}\Lambda)$, extends uniquely to a holomorphic function on $\CC^3$.
\end{lemma}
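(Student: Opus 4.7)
The plan is to use the closed-form expression for $h_{i,j}$ hidden in the proof of \cref{prop.FO}. In that proof the identity $\text{\cref{42905872}}=\text{\cref{42905873}}$, after the substitution $y\mapsto y-2\tau$, yields
\begin{equation*}
h_{i,j}(x,y) \; = \; \frac{\theta\bigl(x-y-(n-2)\tau\bigr)}{\theta(x-y+2\tau)}\,f_{i,j}(x+\tau,y-\tau),
\end{equation*}
valid on $\CC^{2}\times(\CC-\tfrac1n\Lambda)$. The right-hand side is a meromorphic function of $(x,y,\tau)\in\CC^{3}$, so it provides a meromorphic extension of $h_{i,j}$ to all of $\CC^{3}$, and this extension is unique because $\CC^{2}\times(\CC-\tfrac1n\Lambda)$ is dense in $\CC^{3}$.

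It remains to show that this meromorphic extension is in fact holomorphic. The only candidate polar locus is the smooth divisor $D=\{(x,y,\tau)\mid x-y+2\tau\in\Lambda\}$, where $\theta(x-y+2\tau)$ has simple zeros. I plan to verify that the numerator $\theta(x-y-(n-2)\tau)\,f_{i,j}(x+\tau,y-\tau)$ vanishes along $D$ to order at least one, which by Riemann's removable singularity theorem (applied fibrewise to the meromorphic factor) will imply that the quotient is holomorphic across $D$. Concretely, at any point of $D$ we have $(x+\tau)-(y-\tau)=x-y+2\tau\in\Lambda$, so it suffices to check that $f_{i,j}(u,v)$ vanishes whenever $u-v\in\Lambda$.

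The verification of this last statement is easy and will be the only nontrivial step. Since $\theta_i$ and $\theta_j$ both lie in $\Theta_n(\Lambda)$, they satisfy the \emph{same} quasi-periodicity $\theta_\alpha(w+\lambda)=\chi_\lambda(w)\theta_\alpha(w)$ with a factor $\chi_\lambda(w)$ independent of $\alpha$ (by \cref{prop.official.theta.basis}). Consequently $f_{i,j}(u+\lambda,v)=\chi_\lambda(u)f_{i,j}(u,v)$, and since $f_{i,j}(v,v)=0$ by antisymmetry, one obtains $f_{i,j}(v+\lambda,v)=0$ for every $\lambda\in\Lambda$ and every $v$; thus $f_{i,j}$ vanishes identically on $\{u-v\in\Lambda\}$. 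Putting this together with the displayed formula proves the lemma.

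The only real obstacle is bookkeeping: making sure the identity recovered from the proof of \cref{prop.FO} is stated with the correct shifts (it is easy to mis-translate a $y+2\tau$ into a $y$), and confirming that the orders of vanishing along $D$ work out globally rather than merely at generic points. Since $\theta$ has only simple zeros and $D$ is smooth, order-one vanishing of the numerator along $D$ already suffices for holomorphicity on all of $\CC^{3}$.
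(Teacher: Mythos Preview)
Your argument is correct, but it takes a genuinely different route from the paper's proof. The paper works directly with the defining sum
\[
h_{i,j}(x,y) \;=\; d \sum_{r\in\ZZ_n}\frac{\theta_{j-r}(x)\theta_{i+r}(y)}{\theta_{j-i-r}(-\tau)\theta_{r}(\tau)},
\qquad d=\tfrac{1}{n}\theta(\tfrac{1}{n})\cdots\theta(\tfrac{n-1}{n})\,\theta(-n\tau)\,\theta_{j-i}(0),
\]
and checks, for each $\tau\in\tfrac{1}{n}\Lambda$, that the pole of each summand is removed by the prefactor $d$: if exactly one of $\theta_{j-i-r}(-\tau)$, $\theta_r(\tau)$ vanishes, then $\theta(-n\tau)=0$ cancels it; if both vanish simultaneously, one deduces $\theta_{j-i}(0)=0$, so $d$ (and hence $h_{i,j}$) is identically zero. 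This is a term-by-term pole analysis that uses only the zero sets of the $\theta_\alpha$.

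Your approach instead leverages the closed form $h_{i,j}=\psi(f_{i,j})$ obtained in the proof of \cref{prop.FO}, reducing the problem to the single observation that $f_{i,j}(u,v)$ vanishes whenever $u-v\in\Lambda$ (because all $\theta_\alpha\in\Theta_n(\Lambda)$ share the same automorphy factor). This is cleaner and more conceptual: holomorphicity of the extension follows from one divisorial vanishing rather than a case split over $r\in\ZZ_n$. The paper's method, on the other hand, is more self-contained in that it does not rely on the identity $\text{\cref{42905872}}=\text{\cref{42905873}}$, and it makes transparent exactly when $h_{i,j}$ is identically zero (namely when $\theta_{j-i}(0)=0$, i.e., $i=j$). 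Both proofs are short; yours has the virtue of showing directly that the holomorphic extension coincides with $\psi(f_{i,j})$ for all $\tau$, which is precisely what is used afterwards in \cref{defn.rel.n1}.
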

\begin{proof}
Assume $\tau \in \frac{1}{n}\Lambda$. Since $n\tau \in \Lambda$, $\theta(-n\tau)=0$. If only one of $\theta_{j-i-r}(-\tau)$ and
$\theta_r(\tau)$ is zero, the potential pole at $\tau$ is canceled by the vanishing of $\theta(-n\tau)$.

If $\theta_{j-i-r}(-\tau)=\theta_r(\tau)=0$, then $-\tau \in -\frac{j-i-r}{n}\eta + \frac{1}{n}\ZZ + \ZZ\eta$ and 
$\tau \in -\frac{r}{n}\eta + \frac{1}{n}\ZZ + \ZZ\eta$.
It follows that $0\in-\frac{j-i}{n}\eta+\frac{1}{n}\ZZ + \ZZ\eta$, whence $\theta_{j-i}(0)=0$; thus $h_{i,j}(x,y)$ is identically zero.
\end{proof}
 
We also write  $h_{i,j}(x,y)$ for the holomorphic extension of $h_{i,j}(x,y)$ to $\CC^3$ and define, for all $\tau \in \CC$,
\begin{equation}
\label{defn.rel.n1}
\rel_{n,1}(E,\tau) \; :=\;  \text{span}\{h_{i,j} \;   | \; (i,j) \in \ZZ_n^2\}
\end{equation}
and 
\begin{equation}
\label{defn.Q.n1}
Q_{n,1}(E,\tau) \; :=\; \frac{T(\Theta_n(\Lambda))}{(\rel_{n,1}(E,\tau))} \, .
\end{equation}
The isomorphism $\psi$ in \cref{prop.FO} makes sense for all $\tau \in \CC$ so, for all $\tau \in \CC$,
$$
\dim \rel_{n,1}(E,\tau) \; =\;  \tbinom{n}{2}.
$$

\subsubsection{Comparing conventions and results in \cite{FO89} with those in this paper}
The next result ``disagrees'' with the implicit assertion in \cite[\S2]{FO89} that the
quadratic relations for $Q_{n,1}(E,\tau)$ are the functions in $\Theta_n(\Lambda)^{\otimes 2}$ that satisfy the properties 
(a) and (b) at \cite[pp.~210--211]{FO89} (with $s=2$); 
condition (a) says that the the quadratic relations for $Q_{n,1}(E,\tau)$ vanish on $\{(x,x+(n-2)\tau) \; | \; x \in \CC\} $. 

\begin{lemma}
\label{lem.shifted.diag}
The function $h_{ij}$ vanishes on the line
$\{(x,x+(2-n)\tau) \; | \; x \in \CC\}$ in $\CC^2$.
\end{lemma}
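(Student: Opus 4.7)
My plan is to exploit the identity $h_{ij} = \Psi(f_{ij})$ that is essentially established inside the proof of \cref{prop.FO}. Writing out the definition of $\Psi$ (i.e.\ of $\psi$ in \cref{eq.def.psi}), this reads
\[
h_{ij}(x,y) \;=\; \frac{\theta\bigl(x-y+(2-n)\tau\bigr)}{\theta\bigl(x-y+2\tau\bigr)}\, f_{ij}(x+\tau,\,y-\tau),
\]
valid whenever the denominator does not vanish. The first step is simply to evaluate this formula along the line $y = x+(2-n)\tau$: the numerator becomes $\theta(0)=0$ by \cref{lem.riemann.theta.prop}\cref{item.lem.riemann.theta.zeros}, and the denominator becomes $\theta(n\tau)$.

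So as long as $\tau\notin\tfrac{1}{n}\Lambda$, one has $n\tau\notin\Lambda$, hence $\theta(n\tau)\neq 0$ (again by \cref{lem.riemann.theta.prop}\cref{item.lem.riemann.theta.zeros}), and the displayed formula yields $h_{ij}(x, x+(2-n)\tau)=0$ for every $x\in\CC$. This disposes of all the generic $\tau$.

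For the exceptional torsion values $\tau\in\tfrac{1}{n}\Lambda$ the quotient above degenerates to $0/0$, so the plain substitution fails. The fix is to invoke \cref{lem.hij.ext}, which extends $h_{ij}$ to a holomorphic function in $(x,y,\tau)$ on all of $\CC^3$. Then the function $G(x,\tau) := h_{ij}\bigl(x,\,x+(2-n)\tau\bigr)$ is holomorphic on $\CC^2$ and vanishes on the dense open set $\CC\times\bigl(\CC-\tfrac{1}{n}\Lambda\bigr)$, so by the identity theorem it vanishes identically, giving the conclusion for all $\tau$.

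I do not anticipate any genuine obstacle here: once the formula $h_{ij}=\Psi(f_{ij})$ is in hand the vanishing is an immediate substitution, and the only thing that requires even mild care is funneling the torsion values of $\tau$ through \cref{lem.hij.ext} rather than through the explicit meromorphic formula.
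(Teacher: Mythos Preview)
Your proof is correct and takes essentially the same approach as the paper: both use the identity $h_{ij}=\psi(f_{ij})$ from the proof of \cref{prop.FO} and observe that the numerator $\theta(x-y+(2-n)\tau)$ in \cref{eq.def.psi} vanishes on the line. Your treatment of the torsion case $\tau\in\tfrac{1}{n}\Lambda$ via \cref{lem.hij.ext} and continuity is more explicit than the paper's two-line proof, which leaves that step implicit.
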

\begin{proof}
By \cref{eq.def.psi}, $\psi(f_{ij})$ vanishes on this line. The conclusion follows because $\psi(f_{ij})=h_{ij}$.
\end{proof}

The disagreement is apparent rather than real because Odesskii and Feigin are using a different (unstated) 
convention than the one we adopted just before \cref{prop.FO}. 
In  \cite[\S2]{FO89}  they use the convention that $(f \otimes g)(x,y) =f(y)g(x)$. That is appropriate because
if $U$ and $V$ are finite dimensional vector spaces one should identify $(U \otimes V)^*$ with $V^* \otimes U^*$, not with
$U^* \otimes V^*$.\footnote{That this is the ``right'' convention is apparent when $U$ and $V$ are finite dimensional modules over
a $\CC$-algebra $A$: if $U$ is a right $A$-module and $V$ a left $A$-module, then $U^*$ becomes a left $A$-module
and $V^*$  becomes a right $A$-module and there is a natural map $V^* \otimes_A U^* \to (U \otimes_A V)^*$ 
(one can not reverse the order of the tensorands in this situation).} Nevertheless, we will use the 
convention stated  just before \cref{prop.FO}.

\subsubsection{A geometric definition of $\rel_{n,1}(E,\tau)$}
\label{ssect.geom.defn}
Fix arbitrary points $p_{i}=q_{i}+\Lambda\in E=\CC/\Lambda$, $1\leq i\leq n$, such that $p_{1}+\cdots+p_{n}=\frac{n-1}{2}+\Lambda$ and define $\cL:=\cO_E(D)$, where $D=(p_{1})+\cdots+(p_{n})$. As mentioned in \cref{ssec.emb.E}, there is $s\in\Theta_{n}(\Lambda)$, unique up to non-zero scalar multiples, that vanishes exactly at $q_{1},\ldots,q_{n}$ modulo $\Lambda$, counted with multiplicity.
There is an isomorphism of vector spaces $\Theta_{n}(\Lambda)\to H^0(E,\cL)$, $g\mapsto g/s$, and hence 
an identification between $\Theta_{n}(\Lambda)^{\otimes 2}$ and $H^0(E\times E,\cL \boxtimes \cL)$.
Each $g \in \rel_{n,1}(E,\tau) \subseteq \Theta_{n}(\Lambda)^{\otimes 2}$ can therefore be considered as a 
global section of $\cL \boxtimes \cL$ and as such it has a divisor of zeros  that we denote by $(g)_0$
(when $g \ne 0$). By \Cref{lem.shifted.diag}, $(g)_0$ contains the shifted diagonal 
$ \Delta_{(2-n)\tau}:= \{(x,x+(2-n)\tau)\}  \subseteq E^2$. 

The fixed locus of the involution $(x,y) \mapsto (y-2\tau,x+2\tau)$ on $E^2$ is $\Delta_{2\tau}:=\{(x,x+2\tau)\}$. 

For non-zero $g \in H^{0}(E\times E,\cL\boxtimes\cL)$, we define the following conditions:
\begin{enumerate}
	\item[(a$'$)] $(g)_{0}-\Delta_{(2-n)\tau}$ is an effective divisor; i.e., $g$ vanishes along $\Delta_{(2-n)\tau}$.
	\item[(b1$'$)] $(g)_{0}-\Delta_{(2-n)\tau}$ is stable under the involution $(x,y)\mapsto (y-2\tau,x+2\tau)$ on $E^{2}$.
	\item[(b2$'$)] $(g)_{0}-\Delta_{(2-n)\tau}$ contains $\Delta_{2\tau}$ with even, possibly zero, multiplicity.
\end{enumerate}
Condition (a$'$) is the analogue of (a) at \cite[pp.~210--211]{FO89} for $s=2$. Conditions (b1$'$) and (b2$'$) are the analogues of the first and the second assertions of (b), respectively, when $s=2$. \Cref{lem.shifted.diag} says that the quadratic relations for $Q_{n,1}(E,\tau)$ satisfy condition (a$'$).

Let 
\begin{align*}
D_2'   &  \; :=\;  \{\text{functions in $ \Theta_{n}(\Lambda)^{\otimes 2}$ that satisfy (a$'$), (b1$'$), and (b2$'$)}\} \, \cup \, \{0\}.
\end{align*}
Thus, $D_2'$  is the analogue of Odesskii and Feigin's space $D_2$ defined at \cite[p.~210]{FO89}.

\begin{lemma}
\label{lem.another.shifted.diag}
Let $0 \ne g \in \rel_{n,1}(E,\tau) \subseteq \Theta_{n}(\Lambda)^{\otimes 2}$. Then $g$ satisfies (b1\,$'$) and (b2\,$'$).
\end{lemma}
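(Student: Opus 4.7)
My plan is to use the explicit isomorphism $\psi\colon\Alt^{2}\Theta_{n}(\Lambda)\to \rel_{n,1}(E,\tau)$ of \cref{prop.FO}: write $g=\psi(f)$ for some non-zero $f\in\Alt^{2}\Theta_{n}(\Lambda)$, then handle (b1$'$) by deriving a functional equation relating $g\circ\sigma$ to $g$, and handle (b2$'$) by inspecting the three factors in \cref{eq.def.psi} along $\Delta_{2\tau}$. Here $\sigma$ denotes the involution $(x,y)\mapsto(y-2\tau,x+2\tau)$ on $E^{2}$; a short direct computation gives $\sigma(\Delta_{c})=\Delta_{4\tau-c}$, and in particular $\sigma^{*}\Delta_{(2-n)\tau}=\Delta_{(n+2)\tau}$ and $\sigma$ fixes $\Delta_{2\tau}$.

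For (b1$'$), I plan to evaluate \cref{eq.def.psi} at $(y-2\tau,x+2\tau)$, use the antisymmetry $f(y-\tau,x+\tau)=-f(x+\tau,y-\tau)$, and apply $\theta(-z)=-e(-z)\theta(z)$ from \cref{lem.riemann.theta.prop} to each of the resulting $\theta$-factors. The exponential prefactors produced by the two applications should combine into the single constant $e(-n\tau)$, yielding, on $\CC^{2}$, the identity
\[
g\bigl(\sigma(x,y)\bigr)\cdot\theta\bigl(x-y+(2-n)\tau\bigr)\;=\;-e(-n\tau)\cdot\theta\bigl(x-y+(n+2)\tau\bigr)\cdot g(x,y).
\]
All four factors have $\Lambda^{2}$-periodic zero loci on $\CC^{2}$ because their quasi-periodicity multipliers are nowhere vanishing exponentials. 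Taking divisors and descending to $E^{2}$ therefore gives $\sigma^{*}(g)_{0}+\Delta_{(2-n)\tau}=(g)_{0}+\Delta_{(n+2)\tau}$. Combined with $\sigma^{*}\Delta_{(2-n)\tau}=\Delta_{(n+2)\tau}$, this rearranges into $\sigma^{*}\bigl((g)_{0}-\Delta_{(2-n)\tau}\bigr)=(g)_{0}-\Delta_{(2-n)\tau}$, which is (b1$'$).

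For (b2$'$) I will restrict the three factors of $\psi(f)$ to $\Delta_{2\tau}=\{y=x+2\tau\}$. There $\theta(x-y+(2-n)\tau)$ reduces to $\theta(-n\tau)\ne 0$ (because $\tau\notin\tfrac{1}{n}\Lambda$), $\theta(x-y+2\tau)$ has a simple transverse zero, and $f(x+\tau,y-\tau)$ reduces to $f(u,u)\equiv 0$ with $u:=x+\tau$. The key observation is that parametrizing a transversal symmetrically as $(x,y)=(x_{0}-s/2,\,x_{0}+2\tau+s/2)$ and setting $u=x_{0}+\tau$ turns this last factor into $s\mapsto f(u-s/2,u+s/2)$, which is an odd function of $s$ by the antisymmetry of $f$; so $f(x+\tau,y-\tau)$ vanishes on $\Delta_{2\tau}$ to some odd order $2m+1\ge 1$. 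After dividing by the simple transverse zero of $\theta(x-y+2\tau)$, the function $g$ vanishes on $\Delta_{2\tau}$ to the even order $2m$. Since $\tau\notin\tfrac{1}{n}\Lambda$ forces $\Delta_{2\tau}\ne\Delta_{(2-n)\tau}$, subtracting $\Delta_{(2-n)\tau}$ does not alter this multiplicity, proving (b2$'$).

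I expect the most delicate point to be the descent of the divisor identity in (b1$'$) from $\CC^{2}$ to $E^{2}$: each of the four factors is merely $\Lambda^{2}$-quasi-periodic, and one has to verify that the quasi-periodicity multipliers are nowhere vanishing so that each individual divisor on $\CC^{2}$ is genuinely $\Lambda^{2}$-invariant and descends unambiguously. The rest of the argument then reduces to direct manipulations of $\theta$-function identities from \cref{lem.riemann.theta.prop} together with the elementary parity observation for antisymmetric functions used in (b2$'$).
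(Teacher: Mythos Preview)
Your approach is essentially the same as the paper's: for (b1$'$) you derive a functional equation for $g$ under $\sigma$ and take divisors, whereas the paper factors $g=p\cdot q$ with $p(x,y)=\theta(x-y+(2-n)\tau)$ and shows directly that the zero locus of $q$ is $\sigma$-stable; these are equivalent manipulations. For (b2$'$) both arguments rest on the observation that an antisymmetric $f$ vanishes along the diagonal to odd order, so that after dividing by the simple zero of $\theta(x-y+2\tau)$ one gets even multiplicity along $\Delta_{2\tau}$.

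There is one genuine gap. Your argument for (b2$'$) explicitly assumes $\tau\notin\tfrac{1}{n}\Lambda$ (you invoke $\theta(-n\tau)\ne 0$ and $\Delta_{2\tau}\ne\Delta_{(2-n)\tau}$), but the lemma is stated, and used in \cref{lem.rel.vs.d.two}, for all $\tau\in\CC$. When $\tau\in\tfrac{1}{n}\Lambda$ your reasoning breaks down: the numerator $\theta(x-y+(2-n)\tau)$ and denominator $\theta(x-y+2\tau)$ in \cref{eq.def.psi} now have the same zero locus and cancel to a nowhere-vanishing factor, so $g(x,y)$ has the same divisor as $f(x+\tau,y-\tau)$; hence $(g)_0$ contains $\Delta_{2\tau}$ with \emph{odd} multiplicity, not even. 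Since in this case $\Delta_{(2-n)\tau}=\Delta_{2\tau}$, subtracting $\Delta_{(2-n)\tau}$ removes one copy and leaves $\Delta_{2\tau}$ with even multiplicity, as required. This is exactly the extra case the paper treats; you should add it.
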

\begin{proof}
(b1$'$) 
If $\psi$ is the isomorphism in \cref{prop.FO}, then $g=\psi(f)$ for some $f\in\Alt^{2}\Theta_{n}(\Lambda)$. 
Let $p(x,y)$ be the numerator of the fraction in \cref{eq.def.psi} and let $q(x,y):=g(x,y)/p(x,y)$. The zero locus of $g$ is the union of the zero loci of $p$ and $q$, counted with multiplicity. The zero locus of $p$ is the inverse image of $\Delta_{(2-n)\tau}$ under the projection $\CC\to E$. Since
\begin{align*}
	q(y-2\tau,x+2\tau)
	&\;=\;\frac{f(y-\tau,x+\tau)}{\theta(y-x-2\tau)}\\
	&\;=\;\frac{-\,f(x+\tau,y-\tau)}{-\,e(y-x-2\tau)\theta(x-y+2\tau)}\\
	&\;=\;\frac{q(x,y)}{e(y-x-2\tau)},
\end{align*}
the zero locus of $q$ is stable under the involution $(x,y)\mapsto (y-2\tau,x+2\tau)$ on $\CC^{2}$. Thus $g$ satisfies (b1$'$).

(b2$'$)
Write $g=\psi(f)$ as before. Since $f$ is an anti-symmetric function, the zero locus of $f$ contains the diagonal $\D=\{(x,x)\}$ with odd multiplicity. Suppose $\tau\notin\frac{1}{n}\Lambda$. Since the denominator of the fraction in \cref{eq.def.psi} has zeros along $\Delta_{2\tau}$ with multiplicity one and the zero locus of the numerator does not contain $\Delta_{2\tau}$, the zero locus of $g=\psi(f)$ contains $\Delta_{2\tau}$ with even multiplicity. If $\tau\in\frac{1}{n}\Lambda$, then the theta functions in the numerator and the denominator of \cref{eq.def.psi} cancel each other so $g(x,y)=\psi(f)(x,y)=
f(x+\tau,y-\tau)$ whence $(g)_{0}$ contains $\Delta_{2\tau}$ with odd multiplicity
$\ge 1$. Hence $(g)_{0}-\Delta_{2\tau}$ contains $\Delta_{2\tau}$ with even multiplicity.
\end{proof}

\begin{lemma}\label{lem.rel.vs.d.two}
For all $\tau \in \CC$, $\rel_{n,1}(E,\tau) = D_2'$.
\end{lemma}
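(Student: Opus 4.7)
The plan is to establish the two inclusions separately.

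Step 1 (the inclusion $\rel_{n,1}(E,\tau)\subseteq D_2'$) follows directly from the two preceding lemmas: by \cref{lem.shifted.diag}, every $h_{i,j}$ satisfies (a$'$), and by \cref{lem.another.shifted.diag}, every nonzero element of $\rel_{n,1}(E,\tau)$ satisfies (b1$'$) and (b2$'$). Together with the convention $0\in D_2'$, this yields the inclusion.

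Step 2 (the reverse inclusion) is the substantive direction. Given a nonzero $g\in D_2'$, my plan is to produce $f\in\Alt^{2}\Theta_{n}(\Lambda)$ with $\psi(f)=g$, where $\psi$ is the map of \cref{prop.FO}; then $g=\psi(f)\in\rel_{n,1}(E,\tau)$. Formally inverting $\psi$ after a change of variables suggests the candidate
\begin{equation*}
	f(u,v) \;:=\; \frac{\theta(u-v)}{\theta(u-v-n\tau)}\, g(u-\tau,v+\tau).
\end{equation*}
The shift $\phi\colon(u,v)\mapsto(u-\tau,v+\tau)$ carries the zero locus of the denominator onto $\Delta_{(2-n)\tau}$, so condition (a$'$) ensures that the simple pole of $\theta(u-v-n\tau)^{-1}$ is cancelled and $f$ is holomorphic; a routine computation using the quasi-periodicity of $\theta$ and of $g$ then shows $f\in\Theta_{n}(\Lambda)^{\otimes 2}$. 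Viewing $f$ as a global section of $\cM\boxtimes\cM$ on $E\times E$, where $\cM$ is the degree-$n$ line bundle with $H^{0}(E,\cM)\cong\Theta_{n}(\Lambda)$, these cancellations yield
\begin{equation*}
	(f)_0 \;=\; \Delta \,+\, \phi^{*}D', \qquad D':=(g)_{0}-\Delta_{(2-n)\tau},
\end{equation*}
where $\Delta$ denotes the diagonal.

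The heart of the argument, and the main obstacle, is to promote (b1$'$) and (b2$'$) to the anti-symmetry of $f$. A direct check gives $\phi\circ s=\sigma\circ\phi$, where $s$ is the swap of $E\times E$, and $\phi(\Delta)=\Delta_{2\tau}$. Hence pullback along $\phi$ intertwines the $\sigma$- and $s$-actions, so (b1$'$) forces $\phi^{*}D'$ and therefore $(f)_{0}$ to be swap-stable. Since $f$ and $s^{*}f$ are then nonzero sections of $\cM\boxtimes\cM$ with the same zero divisor, their ratio is a nowhere-vanishing holomorphic function on the compact surface $E\times E$, hence a constant; $s^{2}=\id$ forces $s^{*}f=\pm f$. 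To pin down the sign, the multiplicity of $\Delta_{2\tau}=\phi(\Delta)$ in $D'$ is even by (b2$'$), so the multiplicity of $\Delta$ in $(f)_{0}$ is odd. Since a symmetric section of $\cM\boxtimes\cM$ vanishes on $\Delta$ to even order (visible from the local Taylor expansion in the antidiagonal coordinate $u-v$), we conclude $s^{*}f=-f$, i.e., $f\in\Alt^{2}\Theta_{n}(\Lambda)$. A direct substitution confirms $\psi(f)=g$, completing the proof.
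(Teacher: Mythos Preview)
Your proof is correct and follows essentially the same approach as the paper: the paper also defines the candidate inverse $\varphi(g)(x,y)=\frac{\theta(x-y)}{\theta(x-y-n\tau)}g(x-\tau,y+\tau)$, uses (a$'$) to see it is holomorphic, uses (b1$'$) and the ratio-of-sections-with-equal-divisor argument to get $\varphi(g)(x,y)=\pm\varphi(g)(y,x)$, and then uses (b2$'$) to force odd vanishing along $\Delta$ and hence the minus sign. Your presentation is slightly more geometric (phrasing things via $\cM\boxtimes\cM$ and divisors on $E\times E$), but the argument is the same.
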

\begin{proof}
By \cref{lem.shifted.diag,lem.another.shifted.diag}, $\rel_{n,1}(E,\tau) \subseteq D_2'$. For each $g\in D'_{2}$, define
\begin{equation*}
\varphi(g)(x,y)\;:=\;\frac{\theta(x-y)}{\theta(x-y-n\tau)}g(x-\tau,y+\tau).
\end{equation*}
It suffices to show that $\varphi(g)\in\Alt^{2}\Theta_{n}(\Lambda)$ because having done that the (obvious) fact that $\psi\varphi=\id$ then implies that 
$D'_{2} \subseteq \rel_{n,1}(E,\tau)$. 

Since $g$ satisfies (a$'$), $\varphi(g)$ is holomorphic on $\CC^{2}$ and hence belongs to $\Theta_{n}(\Lambda)^{\otimes 2}$. Condition (b1$'$) implies that $(\varphi(g))_{0}$ is stable under the action $(x,y)\mapsto (y,x)$ on $\CC^{2}$. Since the functions $\varphi(g)(x,y)$ and $\varphi(g)(y,x)$ have the same divisor of zeros, their ratio is a nowhere vanishing holomorphic function on $\CC^{2}$ that is doubly periodic with respect to both $x$ and $y$, and therefore constant. So $\varphi(g)(x,y)=a\varphi(g)(y,x)$ for some non-zero $a\in\CC$. Since $\varphi(g)(x,y)=a\varphi(g)(y,x)=a^{2}\varphi(g)(x,y)$, $a=\pm 1$; hence $\varphi(g)$ is either symmetric or anti-symmetric. Condition (b2$'$) implies that $(\varphi(g))_{0}$ contains $\Delta$ with odd multiplicity, so $\varphi(g)$ is anti-symmetric.
\end{proof}

\begin{proposition}
Assume $\tau \notin \frac{1}{n}\Lambda$. The map $x_i \mapsto \theta_i$ extends to an  isomorphism
\begin{equation}
\label{defn.of.Qn1} 
Q_{n,1}(E,\tau) \, \stackrel{\sim}{\longrightarrow} \,   \frac{T(\Theta_{n}(\L))}{(D'_2)}. 
\end{equation}
\end{proposition}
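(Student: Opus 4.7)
The plan is to reduce everything to the identification of linear spans already set up in the proof of \cref{prop.FO} and \cref{lem.rel.vs.d.two}; essentially no new analytic work is needed.

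First I would invoke \cref{prop.basis.1} to note that the $\CC$-linear map $\phi \colon V \to \Theta_n(\Lambda)$, $x_i \mapsto \theta_i$, is an isomorphism of vector spaces, and then extend it to an isomorphism $T(\phi) \colon TV \to T(\Theta_n(\Lambda))$ of graded $\CC$-algebras. To conclude, it is enough to show that the degree-$2$ restriction of $T(\phi)$ carries $\rel_{n,1}(E,\tau) \subseteq V^{\otimes 2}$, in the original sense of \cref{subsec.def.FO.alg}, onto $D_2' \subseteq \Theta_n(\Lambda)^{\otimes 2}$, since then $T(\phi)$ will carry the two-sided ideal generated by the former onto the two-sided ideal generated by the latter, and will descend to an isomorphism of the quotients.

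For the relevant computation, I would recycle the identification displayed in the proof of \cref{prop.FO}, namely
\[
V^{\otimes 2} \ni r_{i,j}(\tau) \; \longleftrightarrow \; d^{-1}\theta_{j-i}(0)\, h_{i,j} \in \Theta_n(\Lambda)^{\otimes 2},
\]
where the nonzero scalar $d^{-1}\theta_{j-i}(0)$ simplifies to $(\tfrac{1}{n}\theta(\tfrac{1}{n})\cdots\theta(\tfrac{n-1}{n})\theta(-n\tau))^{-1}$, which is independent of $(i,j)$ (and finite and nonzero because $\tau \notin \tfrac{1}{n}\Lambda$ and $n \geq 2$). Hence the image of $\operatorname{span}\{r_{ij}(\tau)\}$ under $T(\phi)$ is $\operatorname{span}\{h_{i,j}\}$, which is precisely the subspace also called $\rel_{n,1}(E,\tau)$ in \cref{defn.rel.n1}. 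Combining this with \cref{lem.rel.vs.d.two} gives $T(\phi)(\rel_{n,1}(E,\tau)) = D_2'$, as desired.

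There is no substantive obstacle here; the only point to be careful about is the mild bookkeeping that the symbol $\rel_{n,1}(E,\tau)$ is used with two a priori different meanings—once as a subspace of $V^{\otimes 2}$ spanned by the $r_{ij}$, and once as a subspace of $\Theta_n(\Lambda)^{\otimes 2}$ spanned by the $h_{ij}$—and that these two meanings are intertwined by $T(\phi)$. The apparent degenerate cases when $\theta_{j-i}(0)=0$, i.e., $i=j$ (for $k=1$), give $r_{ii}=0$ on one side and $h_{ii}=0$ on the other, so they contribute nothing to either span and cause no trouble.
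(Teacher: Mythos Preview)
Your proof is correct and follows essentially the same approach as the paper, which simply says the result is an immediate consequence of \cref{lem.rel.vs.d.two}. You have just made explicit the bookkeeping (already implicit in the proof of \cref{prop.FO}) that under the identification $x_i\leftrightarrow\theta_i$ the span of the $r_{ij}$'s coincides with the span of the $h_{ij}$'s, after which \cref{lem.rel.vs.d.two} finishes the argument.
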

\begin{proof}
This is an immediate consequence of \cref{lem.rel.vs.d.two}. 
\end{proof}

We could use the right-hand side of \cref{defn.of.Qn1} to define $Q_{n,1}(E,\tau)$ for all $\tau \in \CC$.
That definition would agree with that in \cref{defn.Q.n1}.

Since we are identifying $\Theta_{n}(\Lambda)$ with $H^{0}(E,\cL)$, the isomorphism in \cref{defn.of.Qn1} can be written as 
\begin{equation}
\label{defn.of.Qn1.2} 
Q_{n,1}(E,\tau) \, \cong \, \frac{T(H^{0}(E,\cL))}{(D''_2)}
\end{equation}    
where $D_2''$ is the subspace of $H^{0}(E^2,\cL \boxtimes \cL)$ consisting of the sections satisfying conditions (a$'$), (b1$'$), and (b2$'$).
We could therefore use the right-hand side of \cref{defn.of.Qn1.2} as a definition of $Q_{n,1}(E,\tau)$.

The virtue of using the right-hand side of \cref{defn.of.Qn1.2} as a definition of  $Q_{n,1}(E,\tau)$ 
is that it allows one to define $Q_{n,1}(E,\tau)$ for {\it any} base field 
and any $E$ having a line bundle of degree $n$ \cite[\S4.1]{TvdB96}.
It would be very useful to have a similar ``geometric'' definition of $Q_{n,k}(E,\tau)$ when $k>1$.

\subsubsection{Comparison with Tate-Van den Bergh's construction of $Q_{n,1}(E,\tau)$}
\label{ssect.cf.TVdB} 
Denote by $\tau$ the translation automorphism $x\mapsto x+\tau$ of $E$. In \cite[\S4.1]{TvdB96}, Tate and Van den Bergh considered an isomorphism
\begin{equation*}
\phi:(1,\tau^{-2})^{*}((\tau^{*}\cL\boxtimes\tau^{*}\cL)(-\Delta))\;\longrightarrow\;(\cL\boxtimes\cL)(-\Delta_{(2-n)\tau})
\end{equation*}
where $(1,\tau^{-2})(x,y)=(x,y-2\tau)$, and defined the space of quadratic relations for $Q_{n,1}(E,\tau)$ to be $\phi\big((1,\tau^{-2})^{*}(\Alt^{2}H^{0}(E,\tau^{*}\cL))\big)$.\footnote{The automorphisms $\sigma$ and $\theta$ in \cite[\S4.1]{TvdB96} are our $\tau^{-1}$ and $\tau^{2}$, respectively.} We will now describe the relation between $\phi$ and the isomorphism $\psi:\Alt^2\Theta_{n}(\Lambda)\to\rel_{n,1}(E,\tau)$ in \cref{prop.FO}.

The domain $(1,\tau^{-2})^{*}((\tau^{*}\cL\boxtimes\tau^{*}\cL)(-\Delta))$ of $\phi$  equals  $(\tau^{*}\cL\boxtimes(\tau^{-1})^{*}\cL)(-\Delta_{2\tau})$ and $\phi$ is the composition
\begin{equation}\label{eq.tvdb.comp}
\xymatrix{
(\tau^{*}\cL\boxtimes(\tau^{-1})^{*}\cL)(-\Delta_{2\tau})\ar[r]^-{\varepsilon} &
(\cL\boxtimes\cL)(-\Delta)\ar[r]^-{\delta} &
(\cL\boxtimes\cL)(-\Delta_{(2-n)\tau})	
}
\end{equation}
where $\varepsilon(f)(x,y):=f(x-\tau,y+\tau)$ and  
\begin{equation*}
\delta(f)(x,y)\;:=\;\frac{s(x+\tau)s(y-\tau)}{s(x)s(y)}\, \frac{\theta(x-y+(2-n)\tau)}{\theta(x-y+2\tau)}f(x+\tau,y-\tau)
\end{equation*}
where  $s \in \Theta_n(\Lambda)$ is the function identified in the first paragraph of \cref{ssect.geom.defn}.
The map $\d$ is the global version of the isomorphism $\psi$ in \cref{prop.FO}; the terms involving $s$ in the definition of $\d$ occur 
because we are identifying
$\Theta_{n}(\Lambda)$  and  $H^{0}(\cL)$ via $g \mapsto g/s$.

Thus $H^{0}$ applied to \cref{eq.tvdb.comp} induces isomorphisms
\begin{equation*}
\xymatrix{
(1,\tau^{-2})^{*}(\Alt^{2}H^{0}(E,\tau^{*}\cL))\ar[rr]^-{H^{0}(\varepsilon)} &&
\Alt^{2}H^{0}(\cL)\ar[rr]^-{H^{0}(\delta)} &&
\rel_{n,1}(E,\tau)
}
\end{equation*}
where the last isomorphism $H^{0}(\delta)$ is equal to $\psi$ via the identification $\Theta_{n}(\Lambda)\cong H^{0}(\cL)$.

\subsection{Extending the definition of  $\rel_{n,k}(E,\tau)$ and $Q_{n,k}(E,\tau)$ to all $\tau \in \CC$ when $k\ge 1$}
\label{subsec.rel.tor.pts}
In this subsection we consider three ways of defining $\rel_{n,k}(E,\tau)$ for all $\tau \in \CC$, and show they produce the same space
in ``good'' situations.

\subsubsection{The first method}
\label{ssect.Lij}
If $\tau \in \CC- \frac{1}{n}\Lambda$ and $r_{ij}(\tau) \ne 0$, we define 
$$
L_{ij}(\tau)\; := \;  \CC r_{ij}(\tau).
$$ 
In \Cref{prop.ext.mor.rel} we use a standard result in projective algebraic geometry to define $L_{ij}(\tau)$ for all $\tau \in \CC$;
in \cref{def.Qnk} we then define $\rel_{n,k}(E,\tau)$ to be the linear span of these $L_{ij}(\tau)$'s.  
We do {\it not} define $r_{ij}(\tau)$ for all $\tau$.

\begin{proposition}\label{prop.ext.mor.rel}
Fix $(i,j) \in \ZZ_n^2$ such that $r_{ij}(\tau)$ is not identically zero on $\CC-\frac{1}{n}\Lambda$.
When $\tau \in \CC-\frac{1}{n}\L$, let $L_{ij}(\tau)$ be the 1-dimensional subspace of 
$V \otimes V$ spanned by the element
$r_{ij}(\tau)$  in \cref{the-relns}. The map 
\begin{equation}
  \label{eq:1}
  L_{ij}\colon E-E[n] \; \longrightarrow \; \PP(V\otimes V), \qquad \tau\mapsto L_{ij}(\tau),
\end{equation}
is a morphism of algebraic varieties and extends uniquely to a morphism $E \to \PP(V\otimes V)$ that we continue to denote by $L_{ij}$.
\end{proposition}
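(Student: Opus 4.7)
The plan is to verify three ingredients: (i) $L_{ij}$ descends from $\CC\setminus\frac{1}{n}\Lambda$ to $E-E[n]$; (ii) the resulting map is an algebraic morphism on $E-E[n]$; and (iii) it extends uniquely to a morphism on the entire curve $E$. The first two are short computations; the third is the classical extension theorem for rational maps out of a smooth projective curve.

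For (i), I would use the quasi-periodicity of the $\theta_\a$'s from \cref{prop.official.theta.basis}: the identities $\theta_\a(z+1)=\theta_\a(z)$ and $\theta_\a(z+\eta)=-e(-nz)\theta_\a(z)$, substituted into \cref{the-relns}, yield $r_{ij}(\tau+1)=r_{ij}(\tau)$ and $r_{ij}(\tau+\eta)=e(2n\tau+n\eta)\,r_{ij}(\tau)$. The scalar factor is nonzero, so $\CC\cdot r_{ij}(\tau)$ depends only on the image of $\tau$ in $E$. Moreover, the coefficient of $x_{j-r}x_{i+r}$ in $r_{ij}(\tau)$ is $\theta_{j-i+(k-1)r}(0)/(\theta_{j-i-r}(-\tau)\theta_{kr}(\tau))$; the denominator never vanishes on $\CC\setminus\frac{1}{n}\Lambda$, so as soon as some $\theta_{j-i+(k-1)r}(0)$ is nonzero---which is precisely the non-vanishing hypothesis on $r_{ij}$---the vector $r_{ij}(\tau)$ is itself nonzero throughout $\CC\setminus\frac{1}{n}\Lambda$. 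Hence $L_{ij}$ is well-defined on all of $E-E[n]$.

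For (ii), the coefficients of $r_{ij}(\tau)$ in the basis $\{x_a\otimes x_b\}$ of $V\otimes V$ are holomorphic on $\CC\setminus\frac{1}{n}\Lambda$ and, by (i), not all simultaneously zero. They therefore serve as holomorphic homogeneous coordinates for $L_{ij}(\tau)$, exhibiting $L_{ij}$ as a holomorphic map to $\PP(V\otimes V)$. Between smooth complex projective varieties, every holomorphic map is algebraic (Chow/GAGA; cf.\ \cite[p.~170]{GH78}), so $L_{ij}$ is a morphism of algebraic varieties on $E-E[n]$.

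For (iii), I would invoke the classical extension theorem for rational maps from a smooth projective curve to a projective variety---equivalently, the valuative criterion of properness applied to the DVRs $\cO_{E,p}$ at the finitely many points $p\in E[n]$: every such rational map is in fact defined on the whole curve. Uniqueness of the extension is immediate because $E$ is irreducible and $\PP(V\otimes V)$ is separated. The one mild subtlety, rather than a serious obstacle, is the book-keeping in step (i); it becomes painless once one observes that each scalar factor arising from quasi-periodicity is independent of the summation index $r$, so it factors out of the sum. Should one prefer a more hands-on extension in (iii), one can clear denominators by multiplying $r_{ij}(\tau)$ by $D(\tau):=\prod_{r\in\ZZ_n}\theta_{j-i-r}(-\tau)\theta_{kr}(\tau)$, obtaining a holomorphic $V\otimes V$-valued function on $\CC$, and then divide out the highest power of $(\tau-\tau_0)$ common to all its coefficients near each $\tau_0\in\frac{1}{n}\Lambda$.
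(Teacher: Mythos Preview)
Your overall plan is sound and matches the paper's three-step structure; step (i) is correct (and more explicit than the paper, which merely cites the observation from earlier), and for (iii) the paper invokes \cite[Prop.~I.6.8]{Hart}, which is your extension theorem.

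There is, however, a genuine gap in step (ii). You invoke Chow/GAGA to pass from holomorphic to algebraic, but the theorem you quote applies to maps between \emph{projective} varieties, and $E-E[n]$ is affine. The statement fails for quasi-projective sources: $z\mapsto[e^z:1]$ is a holomorphic map $\AA^1\to\PP^1$ that is not algebraic. Since your step (iii) (the algebraic extension theorem) presupposes that $L_{ij}$ is already a morphism or rational map of varieties, the argument as written is circular at this point.

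The fix is easy and you essentially supply it yourself. One option is to reorder: first extend the holomorphic map to all of $E$ (via Riemann's removable-singularity theorem for maps to $\PP^N$, or your denominator-clearing trick), then apply GAGA to the resulting holomorphic map $E\to\PP(V\otimes V)$ between projective varieties. The paper instead handles (ii) directly, without GAGA at that stage: it observes that since $z\mapsto(\theta_0(z),\ldots,\theta_{n-1}(z))$ is already known to be an algebraic morphism $E\to\PP^{n-1}$, the ratios $\theta_\a(\tau)/\theta_\b(\tau)$ are rational functions on $E$; combined with $\theta_\a(-\tau)=-e(-n\tau+\tfrac{\a}{n})\theta_{-\a}(\tau)$, this shows every ratio of two coefficients of $r_{ij}(\tau)$ is regular on $E-E[n]$, so $L_{ij}$ is a morphism there, and then \cite[Prop.~I.6.8]{Hart} extends it.
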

\begin{proof}
Since the zeros of the $\theta_\a$'s belong to $\frac{1}{n}\L$, the hypothesis that $\tau$ is not in $\frac{1}{n}\L$ ensures that 
the coefficient of every $x_{j-r}\otimes x_{i+r}$ in $r_{ij}(\tau)$ is a well-defined number. By hypothesis,
 at least one of those coefficients is non-zero so $r_{ij}(\tau) \ne 0$ for all $\tau\in E-E[n]$. 
As remarked in \cref{subsec.def.FO.alg}, the subspace $L_{ij}(\tau)$ depends only the image of $\tau$ in $E-E[n]$.
Hence $L_{ij}$ is a well-defined map from $E-E[n]$. 

Since the map $E\to\PP^{n-1}$ given by $z \; \mapsto \; (\theta_0(z),\ldots,\theta_{n-1}(z))$ is a morphism of algebraic varieties, the ratios $\theta_{\alpha}(z)/\theta_{\beta}(z)$ are rational functions on $E$ and therefore regular functions on $E-E[n]$. Thus, since $\theta_\a(-\tau)
= - e(-n\tau+\frac{\a}{n} \big) \theta_{-\a}(\tau)$, the ratio of any two of the coefficients of $r_{ij}(\tau)$ is a regular function on $E-E[n]$. Hence $L_{ij}$ is a morphism of algebraic varieties.

Since $E$ is a non-singular curve, $L_{ij}$ extends uniquely to a morphism $E\to\PP(V\otimes V)$ by using \cite[Prop.~I.6.8]{Hart} repeatedly.
\end{proof}

If $r_{ij}(\tau)$ is not identically zero on $\CC-\frac{1}{n}\Lambda$, we will abuse notation and define, for all $\tau \in \CC$, 
$$
L_{ij}(\tau) \;:=\;  {L}_{ij}(\text{the image of $\tau$ in $E$}) \;\subseteq\; V^{\otimes 2}.
$$

\begin{definition}\label{def.Qnk}
For all $\tau \in \CC$, we define 
\begin{align*}
\rel_{n,k}(E,\tau) & \;:=\; \text{the subspace  of $V^{\otimes 2}$ spanned by the $L_{ij}(\tau)$'s  },
\\
Q_{n,k}(E,\tau)  & \; :=\; \frac{\, \CC\langle x_0,\ldots,x_{n-1}\rangle \, }{(\rel_{n,k}(E,\tau))}.
\end{align*} 
\end{definition}

When $k=1$, this definition agrees with the definition of $\rel_{n,1}(E,\tau)$ in \cref{defn.rel.n1} (\cref{prop.rel.n.one.vs.n.k}).

\begin{proposition}
\label{prop.polyn.Q2}
For all $\tau \in \CC$, $Q_{2,1}(E,\tau)=\bbC[x_{0},x_{1}]$, a polynomial ring on two variables.
\end{proposition}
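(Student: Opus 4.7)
The plan is to compute the two nontrivial relations $r_{01}(\tau)$ and $r_{10}(\tau)$ explicitly for $n=2$, $k=1$, show that both span the same line, namely $\CC\cdot(x_0x_1-x_1x_0)$, and then use the extension property from \cref{prop.ext.mor.rel} to conclude the same holds for \emph{all} $\tau\in\CC$.

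First, since $k=1$, the coefficient $\theta_{j-i}(0)$ in the relation factors out of each $r_{ij}$. In particular $r_{00}=r_{11}=0$ because $\theta_0(0)=0$, as noted in \cref{ssect.k=1.relns}. For the remaining two relations, using $\theta_\alpha=\theta_{\alpha+2}$, I would compute
\begin{align*}
r_{01}(\tau) &\;=\; \theta_1(0)\Bigl(\tfrac{1}{\theta_1(-\tau)\theta_0(\tau)}\,x_1x_0 \,+\, \tfrac{1}{\theta_0(-\tau)\theta_1(\tau)}\,x_0x_1\Bigr),\\
r_{10}(\tau) &\;=\; \theta_1(0)\Bigl(\tfrac{1}{\theta_1(-\tau)\theta_0(\tau)}\,x_0x_1 \,+\, \tfrac{1}{\theta_0(-\tau)\theta_1(\tau)}\,x_1x_0\Bigr),
\end{align*}
so both relations have the shape $\alpha \cdot x_0x_1 + \beta \cdot x_1x_0$ with coefficients $\alpha=1/(\theta_0(-\tau)\theta_1(\tau))$ and $\beta=1/(\theta_1(-\tau)\theta_0(\tau))$ swapped between them.

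The key calculation is to check that $\alpha=-\beta$. Apply \cref{prop.official.theta.basis}\cref{item.official.theta.basis.minus} with $n=2$: since $\theta_{-1}=\theta_1$,
\begin{equation*}
\theta_0(-\tau)\;=\;-e(-2\tau)\theta_0(\tau),\qquad \theta_1(-\tau)\;=\;-e(-2\tau+\tfrac{1}{2})\theta_1(\tau).
\end{equation*}
Hence $\alpha/\beta=e(-2\tau)/e(-2\tau+\tfrac{1}{2})=e(-\tfrac{1}{2})=-1$. Thus for every $\tau\in\CC-\tfrac{1}{2}\Lambda$ with $\theta_1(0)\neq 0$ (which is automatic, as $0\notin -\tfrac{1}{2}\eta+\tfrac{1}{2}\ZZ+\ZZ\eta$ by \cref{prop.official.theta.basis}\cref{item.official.theta.basis.zeros}), both $r_{01}(\tau)$ and $r_{10}(\tau)$ are nonzero scalar multiples of the commutator $[x_0,x_1]:=x_0x_1-x_1x_0$.

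Consequently, the morphisms $L_{01},L_{10}\colon E-E[2]\to\PP(V^{\otimes 2})$ of \cref{prop.ext.mor.rel} are both equal to the constant morphism with image the point $\CC\cdot[x_0,x_1]$. By the uniqueness of the extension in \cref{prop.ext.mor.rel}, $L_{01}(\tau)=L_{10}(\tau)=\CC\cdot[x_0,x_1]$ for all $\tau\in\CC$. Therefore $\rel_{2,1}(E,\tau)=\CC\cdot[x_0,x_1]$ for every $\tau$, and $Q_{2,1}(E,\tau)=\CC\langle x_0,x_1\rangle/(x_0x_1-x_1x_0)=\CC[x_0,x_1]$. There is no serious obstacle here; the only delicate point is confirming the sign $e(\tfrac{1}{2})=-1$ really forces $\alpha=-\beta$ rather than $\alpha=\beta$ (which would instead have produced a Grassmann algebra), and this is immediate from the standard basis conventions fixed in \cref{official.defn.theta-alpha}.
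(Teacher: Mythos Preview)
Your proof is correct and follows essentially the same route as the paper's: compute $r_{01},r_{10}$ explicitly for $\tau\notin\tfrac12\Lambda$, use the functional equation $\theta_\alpha(-z)=-e(-2z+\tfrac{\alpha}{2})\theta_{-\alpha}(z)$ to see both are scalar multiples of $x_0x_1-x_1x_0$, and then invoke \cref{prop.ext.mor.rel} to extend the constant map to all of $E$. The only cosmetic difference is that the paper simplifies $r_{01}$ directly to an explicit multiple of $[x_0,x_1]$ rather than arguing via the ratio $\alpha/\beta$, but the content is identical.
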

\begin{pf}
First we consider the case $\tau\notin \frac{1}{2}\Lambda$. Since $\theta_{0}(0)=0$, $r_{00}=r_{11}=0$. The other relations in \cref{the-relns} are
\begin{align*}
r_{01}&\;=\;\theta_{1}(0)\left(\frac{ x_1x_0}{\theta_1(-\tau)\theta_0(\tau)}  \; + \; \frac{x_0x_1}{\theta_0(-\tau)\theta_1(\tau)}\right),\qquad\text{and}
\\
r_{10}&\;=\;\theta_{1}(0)\left(\frac{ x_0x_1}{\theta_1(-\tau)\theta_0(\tau)}  \; + \; \frac{x_1x_0}{\theta_0(-\tau)\theta_1(\tau)}\right)
\end{align*}
in $\CC\langle x_0,x_1 \rangle$. Since $n=2$, 
$$
\theta_\a(-z) \;=\; - e(-2z+\tfrac{\a}{2}) \, \theta_{-\a}(z).
$$
In particular, $\theta_0(-z)=-e(-2z)\theta_0(z)$ and $\theta_1(-z)=e(-2z) \theta_1(z)$ so 
\begin{equation*}
r_{01}\;=\;-\, \frac{\theta_{1}(0)}{e(-2\tau)\theta_{0}(\tau)\theta_{1}(\tau)}\, (x_{0}x_{1}-x_{1}x_{0})\;=\;-r_{10}.
\end{equation*}

Let $(i,j)=(0,1)$ or $(1,0)$. The morphism $L_{ij}\colon E-E[2]\to\PP(V\otimes V)$ is constant with value $\CC.(x_{0}x_{1}-x_{1}x_{0})$ so it extends to the constant morphism $E\to\PP(V\otimes V)$ sending every point in $E$ to $\CC.(x_{0}x_{1}-x_{1}x_{0})$. Therefore
$\rel_{2,1}(E,\tau)=\bbC.(x_{0}x_{1}-x_{1}x_{0})$ and $Q_{2,1}(E,\tau)=\bbC[x_{0},x_{1}]$.
\end{pf}

\subsubsection{The second method}
\label{ssect.R.tau.tau}
For each $\tau \in \CC-\frac{1}{n}\Lambda$, and each $z \in \CC$, we define the linear operator
$R_\tau(z):V^{\otimes 2} \to V^{\otimes 2}$ by the formula
\begin{equation}\label{eq:odr}
  R_\tau(z)(x_i\otimes x_j)  \; := \; 
  \frac{\theta_0(-z) \cdots \theta_{n-1}(-z)}{\theta_1(0) \cdots \theta_{n-1}(0)}
 \sum_{r\in \ZZ_n}
  \frac{\theta_{j-i+r(k-1)}(-z+\tau)}
  {\theta_{j-i-r}(-z)\theta_{kr}(\tau)} \,
  x_{j-r}\otimes x_{i+r}
\end{equation}
for all $(i,j) \in \ZZ_n^2$. The fact that $\tau \notin \frac{1}{n}\Lambda$ ensures that $\theta_{kr}(\tau) \ne 0$ for all $r \in \ZZ_n$
whence $z \mapsto R_\tau(z)$ is a holomorphic function  $\CC \to \End_\CC(V^{\otimes 2})$. 

If $\tau \in \CC- \frac{1}{n}\Lambda$,  then the term before the $\Sigma$ symbol in \cref{eq:odr} is non-zero at $z=\tau$ so 
$R_\tau(\tau)(x_i \otimes x_j)$ is a non-zero scalar multiple of $r_{ij}(\tau)$ and $\rel_{n,k}(E,\tau)=\text{the image of $R_\tau(\tau)$}$.

The term $\theta_1(0) \cdots \theta_{n-1}(0)$ before the $\Sigma$ sign is a normalization factor which ensures that $R_\tau(0)$ is the 
identity operator on $V^{\otimes 2}$. The importance of this becomes apparent in one of our later papers when we exploit the 
fact that $R_\tau(z)$ is a solution to the quantum Yang-Baxter equation (with spectral parameter). The normalization factor 
plays no role in this paper.

As a function of $\tau$, $R_\tau(\tau)$ is holomorphic on $\CC-\frac{1}{n}\Lambda$ and its singularities at $\frac{1}{n}\Lambda$
are removable:

\begin{lemma}
  The function $\tau \mapsto R_\tau(\tau)$ extends uniquely to a holomorphic function on $\CC$, which we also denote by $R_{\tau}(\tau)$.\footnote{We warn the reader that $R_0(0)$, which is defined to be $\lim_{\tau\to 0}R_{\tau}(\tau)$, 
  does not equal $\lim_{\tau\to 0}R_{\tau}(-\tau)$ (see \cite[\S5]{CKS4}).} 
\end{lemma}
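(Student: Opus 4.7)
The plan is to invoke Riemann's removable-singularity theorem. On $\CC-\frac{1}{n}\Lambda$ the function $\tau\mapsto R_\tau(\tau)$ is already holomorphic because, by \cref{prop.official.theta.basis}\cref{item.official.theta.basis.zeros}, the denominators $\theta_{kr}(\tau)\theta_{j-i-r}(-\tau)$ appearing in \cref{eq:odr} with $z=\tau$ can only vanish for $\tau\in\frac{1}{n}\Lambda$. Thus I only need to show that $R_\tau(\tau)$ stays bounded near each $\tau_0\in\frac{1}{n}\Lambda$; the extension will then be automatically holomorphic and unique, since two holomorphic extensions must agree on the dense open set $\CC-\frac{1}{n}\Lambda$.

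Fix $\tau_0=\frac{p}{n}+\frac{q}{n}\eta\in\frac{1}{n}\Lambda$. Because the vectors $x_{j-r}\otimes x_{i+r}$ are linearly independent as $r$ ranges over $\ZZ_n$, it is enough to analyze each coefficient of $R_\tau(\tau)(x_i\otimes x_j)$ separately. Using \cref{prop.official.theta.basis}\cref{item.official.theta.basis.zeros}, I first identify the ``bad'' factors in \cref{eq:odr}: the prefactor $\prod_{\alpha=0}^{n-1}\theta_\alpha(-\tau)$ has a simple zero at $\tau_0$, contributed exclusively by $\theta_q(-\tau)$; the denominator factor $\theta_{j-i-r}(-\tau)$ vanishes simply at $\tau_0$ precisely when $r\equiv j-i-q\pmod n$, call this value $r_2$; and $\theta_{kr}(\tau)$ vanishes simply at $\tau_0$ precisely when $r\equiv -qk'\pmod n$, call this $r_1$.

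A short case analysis then finishes the argument. If $r\notin\{r_1,r_2\}$ the denominator is non-vanishing at $\tau_0$, so the coefficient is holomorphic (and in fact vanishes, thanks to the prefactor). If $r\in\{r_1,r_2\}$ but $r_1\neq r_2$, the simple zero of the prefactor exactly cancels the single simple zero in the denominator. The delicate case is $r=r_1=r_2$, where the denominator acquires a double zero but the prefactor contributes only one simple zero; the rescue must come from the constant numerator $\theta_{j-i+r(k-1)}(0)$. Indeed, $r_1=r_2$ reads $-qk'\equiv j-i-q\pmod n$, so with $r=-qk'$ one computes $j-i+r(k-1)\equiv q(1-k')-qk'(k-1)=q(1-kk')\equiv 0\pmod n$, using $kk'\equiv 1\pmod n$, whence the numerator is $\theta_0(0)=0$ and the entire term is identically zero.

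The main obstacle is exactly this last identity: \emph{a priori} nothing prevents a residual simple pole at $\tau_0$ when $r_1=r_2$, and only the arithmetic relation $kk'\equiv 1\pmod n$ forces the numerator to vanish in precisely this situation. Everything else reduces to bookkeeping with the zero sets of the $\theta_\alpha$'s described in \cref{prop.official.theta.basis}\cref{item.official.theta.basis.zeros}.
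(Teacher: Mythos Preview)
Your proof is correct and follows essentially the same approach as the paper's: both arguments show that each summand in \cref{eq.r.tau.tau} has at most a simple pole at any $\tau_0\in\frac{1}{n}\Lambda$ (cancelled by the simple zero of the prefactor), because whenever both denominator factors $\theta_{j-i-r}(-\tau)$ and $\theta_{kr}(\tau)$ vanish simultaneously the index $j-i+r(k-1)$ is forced to be $\equiv 0\pmod n$, so the constant numerator $\theta_{j-i+r(k-1)}(0)=\theta_0(0)=0$ kills that term identically. The paper phrases this more tersely via the zero-set description of the $\theta_\alpha$'s, while you make the parametrization $\tau_0=\frac{p}{n}+\frac{q}{n}\eta$ and the values $r_1,r_2$ explicit; the content is the same.
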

\begin{proof}
If the two theta functions in the denominator of one of the summands in the expression 
\begin{equation}\label{eq.r.tau.tau}
R_{\tau}(\tau)(x_{i}\otimes x_{j})\;=\;
 \frac{\theta_0(-\tau) \cdots \theta_{n-1}(-\tau)}{\theta_1(0) \cdots \theta_{n-1}(0)}
 \sum_{r\in \ZZ_n}
  \frac{\theta_{j-i+r(k-1)}(0)}
  {\theta_{j-i-r}(-\tau)\theta_{kr}(\tau)} \,
  x_{j-r}\otimes x_{i+r}
\end{equation}
both vanish  at $\tau$, then $-\tau \in -\frac{j-i-r}{n}\eta+\frac{1}{n}\ZZ+\ZZ\eta$ and $\tau \in -\frac{kr}{n}\eta+\frac{1}{n}\ZZ+\ZZ\eta$
 so $0\in-\frac{j-i+(k-1)r}{n}\eta+\frac{1}{n}\ZZ+\ZZ\eta$, whence $\theta_{j-i+r(k-1)}(0)=0$. Each summand
 therefore has at most a pole of order 1 at $\tau$ which is canceled out by the order-one zero at $\tau$ that appears in the term before 
 the $\Sigma$ sign.
\end{proof}

\begin{lemma}
For all $\tau \in \CC$,
$$
\CC.R_\tau(\tau)(x_i \otimes x_j) \;=\; 
	\begin{cases} 
	0 & \text{if $\tau = \tfrac{a}{n}+\tfrac{b}{n}\eta$ for some $a,b \in \ZZ$ and $i-j=(k'-1)b$ in $\ZZ_n$,}
	\\
	L_{ij}(\tau) & \text{otherwise.}
	\end{cases}
$$	
\end{lemma}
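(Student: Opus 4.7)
The plan is to analyze $R_\tau(\tau)(x_i \otimes x_j)$ coefficient-by-coefficient with respect to the linearly independent family $\{x_{j-r}\otimes x_{i+r} : r\in\ZZ_n\}\subseteq V^{\otimes 2}$. As in the proof of the previous lemma, I would combine one factor of the denominator with the prefactor and write
\[
R_\tau(\tau)(x_i\otimes x_j) \;=\; \sum_{r\in\ZZ_n} f_r(\tau)\,x_{j-r}\otimes x_{i+r},
\qquad f_r(\tau)\;:=\;\frac{\prod_{\gamma\neq j-i-r}\theta_\gamma(-\tau)}{\prod_{\gamma\neq 0}\theta_\gamma(0)}\cdot\frac{\theta_{j-i+r(k-1)}(0)}{\theta_{kr}(\tau)},
\]
where each $f_r$ is holomorphic on $\CC$. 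When $\tau\notin\tfrac{1}{n}\Lambda$ the prefactor in \cref{eq.r.tau.tau} does not vanish, so $R_\tau(\tau)(x_i\otimes x_j)$ is a non-zero scalar multiple of $r_{ij}(\tau)$ and its span is $L_{ij}(\tau)$ by definition.

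The substantive work is at $\tau_0=\tfrac{a}{n}+\tfrac{b}{n}\eta\in\tfrac{1}{n}\Lambda$. By \cref{prop.official.theta.basis}\cref{item.official.theta.basis.zeros}, $\theta_\gamma(\tau_0)=0\iff\gamma\equiv -b$ and $\theta_\gamma(-\tau_0)=0\iff\gamma\equiv b$ in $\ZZ_n$, and all such zeros are simple. Consequently $f_r(\tau_0)$ can be non-zero only if either the numerator product omits the vanishing factor, i.e.\ $j-i-r\equiv b$, equivalently $r\equiv j-i-b$, or the denominator $\theta_{kr}$ vanishes at $\tau_0$ and produces an indeterminate $0/0$ with the prefactor, i.e.\ $kr\equiv -b$, equivalently $r\equiv -k'b$. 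These two ``special'' indices coincide if and only if $j-i-b\equiv -k'b$, i.e.\ $i-j\equiv(k'-1)b$---precisely the bad condition in the lemma.

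In the bad case the two indices merge into $r_0:=-k'b$, and direct substitution gives $j-i+r_0(k-1)=j-i+(k'-1)b\equiv 0$ in $\ZZ_n$, so the constant numerator $\theta_{j-i+r_0(k-1)}(0)=\theta_0(0)$ vanishes. Hence $f_{r_0}$ is identically zero on $\CC$, and for every other $r$ the prefactor vanishes at $\tau_0$ (the zero factor $\gamma=b$ is not excluded) while the denominator does not, giving $f_r(\tau_0)=0$. Therefore $R_\tau(\tau_0)(x_i\otimes x_j)=0$.

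In the good case the two special indices are distinct, and $f_{j-i-b}(\tau_0)$ is a product of demonstrably non-vanishing factors: the numerator product omits the only vanishing index; the denominator $\theta_{k(j-i-b)}(\tau_0)$ is non-zero precisely because we are outside the bad case; and the constant $\theta_{(j-i)k-b(k-1)}(0)$ is non-zero by the same arithmetic equivalence. Linear independence of $\{x_{j-r}\otimes x_{i+r}\}_{r\in\ZZ_n}$ then yields $R_\tau(\tau_0)(x_i\otimes x_j)\neq 0$, and by continuity together with the definition of the extended morphism $L_{ij}$ from \cref{prop.ext.mor.rel} its span must equal $L_{ij}(\tau_0)$. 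The main thing to get right is the arithmetic coincidence $j-i-b\equiv -k'b\iff i-j\equiv(k'-1)b$, together with the observation that at the collision index the constant $\theta_{j-i+r(k-1)}(0)$ vanishes; this dovetailing turns a potentially delicate $0/0$ limit analysis into pure bookkeeping.
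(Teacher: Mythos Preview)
Your proof is correct and follows essentially the same approach as the paper: a coefficient-by-coefficient analysis at $\tau_0\in\tfrac{1}{n}\Lambda$, pivoting on the arithmetic coincidence $j-i-b\equiv -k'b\iff i-j\equiv(k'-1)b$ and the observation that at the merged index the constant $\theta_{j-i+r(k-1)}(0)=\theta_0(0)$ vanishes, followed by continuity to identify the span with $L_{ij}(\tau_0)$. The only difference is organizational: the paper argues contrapositively (assume $R_\tau(\tau)(x_i\otimes x_j)=0$, deduce $\tau\in\tfrac{1}{n}\Lambda$ and then the congruence on $i-j$), whereas you compute the holomorphic coefficients $f_r$ directly and verify vanishing in the bad case and non-vanishing of $f_{j-i-b}(\tau_0)$ in the good case; your version makes both directions explicit.
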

\begin{pf}
Suppose $R_\tau(\tau)(x_i \otimes x_j) \ne 0$. There is a neighborhood $U \subseteq E$ of $\tau+\Lambda$ on which the function
$z \mapsto \CC .R_z(z)(x_i \otimes x_j)$ is a non-vanishing continuous function $U \to \PP(V \otimes V)$. Since this function agrees with 
the function $z \mapsto L_{ij}(z)$ on $U \cap (E-E[n])$, these two functions agree on $U$. Hence $\CC.R_\tau(\tau)(x_i \otimes x_j) =L_{ij}(\tau)$.

Now we assume that $R_\tau(\tau)(x_i \otimes x_j) = 0$. 

If $\tau \notin \frac{1}{n}\Lambda$, then $r_{ij}(\tau)$ would be non-zero and $R_\tau(\tau)(x_i \otimes x_j)$ would be a non-zero scalar multiple 
of $r_{ij}(\tau)$; but this is not the case, so we conclude that $\tau = \tfrac{a}{n}+\tfrac{b}{n}\eta$ for some $a,b \in \ZZ$. 
Since the term before the $\Sigma$ sign in \cref{eq:odr} has a zero of order $1$ at $z= \tfrac{a}{n}+\tfrac{b}{n}\eta$, 
$\theta_{j-i+r(k-1)}(0)$ must be $0$ whenever $\theta_{j-i-r}(-\tau)\theta_{kr}(\tau)=0$;
i.e., $j-i+r(k-1)=0$ when $j-i-r=b$ and when $kr=-b$ (in $\ZZ_{n}$);  
i.e., $j-i+(j-i-b)(k-1)= j-i-k'b(k-1)= 0$; hence $j-i+(k'-1)b=0$.
\end{pf}

The next proof uses two results that are proved in later sections.

\begin{proposition}
\label{prop.relns.im.R.tau.tau}
For all $\tau \in \CC$, 
$$
\rel_{n,k}(E,\tau) \;=\; \text{the image of $R_\tau(\tau)$}.
$$
\end{proposition}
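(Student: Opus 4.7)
For $\tau\notin\tfrac{1}{n}\Lambda$ the identity is immediate. Comparing \cref{eq:odr} with \cref{the-relns} one has
\begin{equation*}
R_\tau(\tau)(x_i\otimes x_j) \;=\; c(\tau)\,r_{ij}(\tau), \qquad c(\tau) \;:=\; \frac{\theta_0(-\tau)\cdots\theta_{n-1}(-\tau)}{\theta_1(0)\cdots\theta_{n-1}(0)},
\end{equation*}
and since $c(\tau)\ne 0$ is a scalar \emph{independent} of $(i,j)$, the image of $R_\tau(\tau)$ coincides with $\operatorname{span}\{r_{ij}(\tau)\mid i,j\in\ZZ_n\} = \rel_{n,k}(E,\tau)$.

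Now fix $\tau_0 = \tfrac{a}{n}+\tfrac{b}{n}\eta \in \tfrac{1}{n}\Lambda$ and let $B := \{(i,j)\in\ZZ_n^2 \mid i-j \equiv (k'-1)b \pmod n\}$ be the exceptional set identified in the preceding lemma. One inclusion is then immediate: each column $R_{\tau_0}(\tau_0)(x_i\otimes x_j)$ either vanishes (when $(i,j)\in B$) or spans $L_{ij}(\tau_0)\subseteq\rel_{n,k}(E,\tau_0)$ (when $(i,j)\notin B$), whence $\Im R_{\tau_0}(\tau_0) \subseteq \rel_{n,k}(E,\tau_0)$.

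For the reverse inclusion, I must show that $L_{ij}(\tau_0)\subseteq \Im R_{\tau_0}(\tau_0)$ for every $(i,j)\in B$ at which $L_{ij}$ is defined. When $k=1$ this is vacuous: $(k'-1)b=0$ forces $B=\{(i,i)\mid i\in\ZZ_n\}$, while $r_{ii}\equiv 0$ (see \cref{ssect.k=1.relns}) so $L_{ii}$ is undefined. For $k\ge 2$, however, $B$ contributes $n$ pairs at which $L_{ij}$ is defined, and these contributions need not a priori lie in the span of the good columns. The plan is to match dimensions, invoking as the first ``later-section result'' the generic equality $\dim\rel_{n,k}(E,\tau)=\binom{n}{2}$ coming from the Grassmannian extension of $\tau\mapsto\rel_{n,k}(E,\tau)$, and as the second a direct verification that the rank of $R_{\tau_0}(\tau_0)$ does not drop below $\binom{n}{2}$.

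The main obstacle is this second ingredient. Since rank is lower semi-continuous in a holomorphic matrix family, the rank of $R_{\tau_0}(\tau_0)$ could a priori be strictly smaller than the generic value $\binom{n}{2}$. I would close the gap by exhibiting $\binom{n}{2}$ explicitly independent ``good'' columns at $\tau_0$, exploiting either (i) the Heisenberg-group $H_n$-equivariance of $R_\tau(z)$ developed in \cref{sec.twist.FO.alg}, which organises the nonvanishing columns into $H_n$-orbits whose span must be $H_n$-stable and hence forced to be maximal, or (ii) a direct Taylor-expansion argument near $\tau_0$: for $(i,j)\in B$ one writes $R_\tau(\tau)(x_i\otimes x_j) = (\tau-\tau_0)g_{ij}(\tau)$ with $g_{ij}$ holomorphic and $\CC.g_{ij}(\tau_0)=L_{ij}(\tau_0)$, and one uses the theta-function identities underlying the preceding lemma to express $g_{ij}(\tau_0)$ as a linear combination of the nonvanishing columns. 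Either route yields equality of dimensions, and combined with the inclusion established above, equality of subspaces.
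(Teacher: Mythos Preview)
Your treatment of the non-torsion case and of the inclusion $\Im R_{\tau_0}(\tau_0)\subseteq\rel_{n,k}(E,\tau_0)$ at torsion points is correct and matches the paper, as does your observation that the $k=1$ case is vacuous. The gap is in the reverse inclusion for $k\ge 2$, where neither of your proposed routes actually closes the argument.

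Route (i) fails: $H_n$-equivariance of $R_\tau(\tau)$ does make its image $H_n$-stable, but that alone does not force the dimension to be $\binom{n}{2}$. As a representation of $H_n$ (with $\epsilon$ acting by $\omega^2$), $V^{\otimes 2}$ has many $H_n$-stable subspaces of dimensions strictly between $0$ and $\binom{n}{2}$, so ``$H_n$-stable hence maximal'' is not a valid inference. Route (ii) simply restates the goal: writing $R_\tau(\tau)(x_i\otimes x_j)=(\tau-\tau_0)g_{ij}(\tau)$ with $\CC.g_{ij}(\tau_0)=L_{ij}(\tau_0)$ and then asserting that $g_{ij}(\tau_0)$ lies in the span of the nonvanishing columns \emph{is} the reverse inclusion you are trying to prove. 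Invoking ``the theta-function identities underlying the preceding lemma'' does not supply a mechanism.

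The paper's argument is different and avoids dimension-matching altogether. Using \cref{lem.tw.L}, the Heisenberg generators $S,T$ intertwine the families $\{L_{ij}(\tau)\}$ at different translates of $\tau$; concretely, for $\tau_0=\tfrac{a}{n}+\tfrac{b}{n}\eta$ and bad $(i,j)$ (i.e.\ $i-j=(k'-1)b$) one has
\[
L_{ij}(\tau_0)\;=\;(1\otimes S^{-k-1})^{a}(1\otimes T^{-k'-1})^{b}\bigl(L_{i+b,\,i+b}(0)\bigr).
\]
Now \cref{prop.qnk.poly}\cref{item.prop.qnk.poly.ii} computes $L_{i+b,i+b}(0)$ explicitly as a linear combination of commutators $[x_{i+b-r},x_{i+b+r}]$, hence it lies in $\sum_{\alpha\ne\beta}L_{\alpha\beta}(0)$. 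Transporting back via the same Heisenberg operators lands $L_{ij}(\tau_0)$ inside $\sum L_{i'j'}(\tau_0)$ over indices with $j'-i'+(k'-1)b\ne 0$, which is exactly $\Im R_{\tau_0}(\tau_0)$. This is your route (ii) made concrete: the ``identity'' you need is not a theta-function identity but the twisting relation of \cref{lem.tw.L} combined with the explicit polynomial-ring structure at $\tau=0$.
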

\begin{pf}
If $\tau \notin \frac{1}{n}\Lambda$, 
then $\CC.R_\tau(\tau)(x_i \otimes x_j) = L_{ij}(\tau)$ for all $i$ and $j$  for which $r_{ij}$ is not identically zero on $\CC-\frac{1}{n}\Lambda$
 so $\im R_\tau(\tau)=\rel_{n,k}(E,\tau)$.
It therefore remains to prove the result when $\tau = \tfrac{a}{n}+\tfrac{b}{n}\eta$ for some $a,b \in \ZZ$. For the rest of the proof we assume that is 
the case. 

If $i-j \ne (k'-1)b$, then $\CC.R_\tau(\tau)(x_i \otimes x_j) = L_{ij}(\tau)$. Hence 
$$
\rel_{n,k}(E,\tau)\;=\;\im R_\tau(\tau)+\sum_{\substack{i,j\in\ZZ_{n}\\  i-j=(k'-1)b}}L_{ij}(\tau).
$$
We will complete the proof by showing that the $L_{ij}(\tau)$'s for which $ i-j=(k'-1)b$ are contained in the sum of the 
$L_{i'j'}(\tau)$'s for which $j'-i'+(k'-1)b\neq 0$.

With that goal in mind, assume  $ i-j=(k'-1)b$. 
By \cref{lem.tw.L},
\begin{align*}
	L_{ij}(\tau)
	& \;=\;(1\otimes S^{-k-1})^{a}(L_{ij}(\tfrac{b}{n}\eta))
	\\
	&\;=\;(1\otimes S^{-k-1})^{a}(1\otimes T^{-k'-1})^{b}(L_{i+b,j+k'b}(0))
	\\
	&\;=\;(1\otimes S^{-k-1})^{a}(1\otimes T^{-k'-1})^{b}(L_{i+b,i+b}(0)).
\end{align*}
By \cref{prop.qnk.poly}\cref{item.prop.qnk.poly.ii},  $L_{i+b,i+b}(0)$ is contained in the 
sum of the $L_{\a\b}(0)$'s for which $\a\neq\b$. Thus $L_{ij}(\tau)$ is contained in 
\begin{equation*}
	\sum_{\a \ne \b} (1\otimes S^{-k-1})^{a}(1\otimes T^{-k'-1})^{b}(L_{\a\b}(0))\;=\; \sum_{\a \ne \b}  L_{\a-b,\b-k'b}(\tau).
\end{equation*}
Set $i':=\a-b$ and $j':=\b-k'b$. Then $\a\neq\b$ implies $j'-i'+(k'-1)b\neq 0$.
\end{pf}

\begin{proposition}\label{prop.rel.n.one.vs.n.k}
When $k=1$, the space $\rel_{n,k}(E,\tau)$ defined in \cref{def.Qnk} is equal to $\rel_{n,1}(E,\tau)$ defined in \cref{defn.rel.n1}.
\end{proposition}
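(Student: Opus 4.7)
The plan is to identify $V$ with $\Theta_n(\Lambda)$ via $x_i\leftrightarrow\theta_i$, so that both candidate subspaces lie in $\Theta_n(\Lambda)^{\otimes 2}$. Comparing \cref{the-relns} (with $k=1$) with the formula for $h_{ij}$ in \cref{half.complicated.identity}, I would derive
\[
h_{ij}(\tau) \;=\; \tfrac{1}{n}\theta\bigl(\tfrac{1}{n}\bigr)\cdots\theta\bigl(\tfrac{n-1}{n}\bigr)\,\theta(-n\tau)\,r_{ij}(\tau),
\]
and observe that the scalar on the right is nonzero exactly when $\tau\notin\tfrac{1}{n}\Lambda$. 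This immediately gives $L_{ij}(\tau)=\CC\,h_{ij}(\tau)$ whenever $i\ne j$ and $\tau\notin\tfrac{1}{n}\Lambda$ (and both are zero when $i=j$), so the two candidate definitions of $\rel_{n,1}(E,\tau)$ agree on $E-E[n]$, reducing the problem to points $\tau_0\in\tfrac{1}{n}\Lambda$.

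At such a point I would use the key structural ingredient---stated in the paragraph following \cref{defn.Q.n1}---that $\psi$ from \cref{prop.FO} remains an isomorphism for every $\tau\in\CC$; consequently $\tau\mapsto\rel_{n,1}(E,\tau)$ (in the sense of \cref{defn.rel.n1}) is a rank-$\binom{n}{2}$ holomorphic vector subbundle $\cR$ of the trivial bundle $\CC\times V^{\otimes 2}$. Fix $(i,j)$ with $i\ne j$; by \cref{lem.hij.ext}, $h_{ij}$ is a holomorphic section of the trivial bundle and lies in $\cR$ away from $\tfrac{1}{n}\Lambda$. Write $h_{ij}(\tau)=(\tau-\tau_0)^{m}g_{ij}(\tau)$ locally near $\tau_0$, with $g_{ij}$ holomorphic and $g_{ij}(\tau_0)\ne 0$. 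On the punctured neighborhood $g_{ij}(\tau)$ is a nonzero scalar multiple of $h_{ij}(\tau)$ and hence lies in $\rel_{n,1}(E,\tau)$, and closedness of the subbundle $\cR$ forces $g_{ij}(\tau_0)\in\rel_{n,1}(E,\tau_0)$ as well. Since $[g_{ij}(\tau)]=[h_{ij}(\tau)]=L_{ij}(\tau)$ on the punctured neighborhood, the uniqueness in \cref{prop.ext.mor.rel} forces $L_{ij}(\tau_0)=\CC\,g_{ij}(\tau_0)$. Summing over $(i,j)$ delivers the inclusion of the \cref{def.Qnk} span into the \cref{defn.rel.n1} span. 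For the reverse inclusion, each $h_{ij}(\tau_0)$ is either zero (trivially in any span) or, when $m=0$, equal to $g_{ij}(\tau_0)$, which spans $L_{ij}(\tau_0)\subseteq\rel_{n,1}(E,\tau_0)$ from \cref{def.Qnk}; either way $h_{ij}(\tau_0)$ lies in the latter span.

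The main obstacle is the rescaling-and-extension maneuver converting the possibly vanishing section $h_{ij}$ into a nonvanishing holomorphic section $g_{ij}$ whose value at $\tau_0$ still lies in $\rel_{n,1}(E,\tau_0)$. This hinges on the constancy of $\dim\rel_{n,1}(E,\tau)=\binom{n}{2}$---equivalently, the injectivity of $\psi_\tau$ throughout $\CC$---which is precisely what upgrades the family of subspaces $\rel_{n,1}(E,\tau)$ into a genuine holomorphic vector subbundle with closed total space.
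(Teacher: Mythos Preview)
Your proof is correct, but it takes a genuinely different route from the paper's argument. The paper observes that
\[
h_{ij} \;=\; \tfrac{1}{n}\theta(\tfrac{1}{n})\cdots\theta(\tfrac{n-1}{n})\,\theta_1(0)\cdots\theta_{n-1}(0)\,g(\tau)\,R_\tau(\tau)(x_i\otimes x_j),
\]
where $g(\tau)=\theta(-n\tau)/\bigl(\theta_0(-\tau)\cdots\theta_{n-1}(-\tau)\bigr)$ is \emph{nowhere vanishing} on $\CC$ (numerator and denominator have matching simple zeros exactly on $\tfrac{1}{n}\Lambda$). Hence $\operatorname{span}\{h_{ij}(\tau)\}=\im R_\tau(\tau)$ for every $\tau$, and the paper then invokes \cref{prop.relns.im.R.tau.tau} to identify $\im R_\tau(\tau)$ with the span of the $L_{ij}(\tau)$'s.

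Your approach bypasses $R_\tau(\tau)$ altogether: you relate $h_{ij}$ directly to $r_{ij}$, accept that the connecting scalar $\theta(-n\tau)$ vanishes on $\tfrac{1}{n}\Lambda$, and then recover the missing torsion-point case via the constant-rank subbundle structure of $\tau\mapsto\rel_{n,1}(E,\tau)$ (in the sense of \cref{defn.rel.n1}) together with a local rescaling $h_{ij}=(\tau-\tau_0)^m g_{ij}$ and the uniqueness in \cref{prop.ext.mor.rel}. The paper's route is slicker---the nowhere-vanishing scalar trick eliminates the need for any closedness or limit argument---but it leans on \cref{prop.relns.im.R.tau.tau}, whose proof in turn cites the later results \cref{lem.tw.L} and \cref{prop.qnk.poly}. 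Your argument, by contrast, is self-contained using only material already in place (\cref{prop.FO}, \cref{lem.hij.ext}, \cref{prop.ext.mor.rel}, and the remark after \cref{defn.Q.n1}), at the price of the more hands-on subbundle extension maneuver.
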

\begin{proof}
In this proof, $\rel_{n,1}(E,\tau)$ denotes the space defined in \cref{defn.rel.n1}. We will show that $\rel_{n,1}(E,\tau)=\rel'_{n,1}(E,\tau)$. Recall that $\rel_{n,1}(E,\tau)$ is the subspace of $\Theta_{n}(\Lambda)^{\otimes 2}$ spanned by $h_{ij}$'s defined in \cref{half.complicated.identity} via the identification $x_i \leftrightarrow \theta_i$. Hence
\begin{equation*}
h_{ij}\;=\;\tfrac{1}{n}\theta(\tfrac{1}{n})\cdots\theta(\tfrac{n-1}{n})\theta_{1}(0)\cdots\theta_{n-1}(0)g(\tau)R_{\tau}(\tau)(x_{i}\otimes x_{j})
\end{equation*}
where
\begin{equation*}
	g(\tau)\;:=\;\frac{\theta(-n\tau)}{\theta_{0}(-\tau)\cdots\theta_{n-1}(-\tau)}.
\end{equation*}
Since both the numerator and denominator of $g(\tau)$ have zeros exactly at $\frac{1}{n}\Lambda$ with multiplicity one, $g$ is a nowhere vanishing holomorphic function on $\CC$. Thus the linear span of $h_{ij}$ is equal to that of $R_{ij}(x_{i}\otimes x_{j})$ for all $\tau$.
\end{proof}

\subsubsection{The third method} 
\label{sect.subspaces}
We write $\Grass(d, W)$ for the Grassmannian of $d$-dimensional subspaces of a finite dimensional vector space $W$.

In \cite{CKS4}, we will show that $Q_{n,k}(E,\tau)$ has the same Hilbert series as the polynomial ring on $n$ variables
when $\tau$ is not a torsion point of $E$. The first step towards this is to determine the dimension of $\rel_{n,k}(E,\tau)$. 
(The results in this paper do not give any information about this, except in some special cases.)
In \cite{CKS4}, we will show that $\dim \rel_{n,k}(E,\tau) =  \binom{n}{2}$ when $\tau \notin E[2n]$.\footnote{\cref{cor.qnk.tors} below 
shows that $\dim \rel_{n,k}(E,\tau)  = \binom{n}{2}$ for all $\tau \in E[n]$.}

Once we know that $\dim \rel_{n,k}(E,\tau)=\binom{n}{2}$ outside a finite set $\cS \subseteq E$, 
the map $\tau \mapsto \rel_{n,k}(E,\tau)$ becomes a morphism 
$E-\cS \to \Grass\big(\binom{n}{2}, V^{\otimes 2}\big)$; that morphism extends in a unique way to a morphism 
$f:E \to \Grass\big(\binom{n}{2}, V^{\otimes 2}\big)$ so we could  use $f(\tau)$ in place of $\rel_{n,k}(E,\tau)$.  
In this subsection we fill in the details of this argument and check that $\rel_{n,k}(E,\tau)$ is contained in $f(\tau)$ (with equality whenever
$\dim \rel_{n,k}(E,\tau)=\binom{n}{2}$). 

Although the next two results are ``standard'' we include proofs for the convenience of the reader.
In them we work over an algebraically closed field $\Bbbk$.

\begin{proposition}
\cite[Prop.~13.4]{saltman}
\label{prop.grass.1}
Let $W$ and $W'$ be finite dimensional $\Bbbk$-vector spaces. Let $d:=\dim W$.
Let $X$ be a variety over $\Bbbk$ and $g:X \to \Hom_\Bbbk(W,W')$ a morphism of varieties.
If $r:=\rank g(x)$ is the same for all $x \in X$, then the maps
\begin{enumerate}
  \item\label{item.grass.im}
 $X \to \Grass(r,W')$, $x \mapsto \im g(x)$, and
  \item\label{item.grass.ker}
 $X \to \Grass(d-r,W)$, $x \mapsto \ker g(x)$,
\end{enumerate}
are morphisms.
\end{proposition}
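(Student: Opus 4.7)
The statement is local on $X$, so it suffices to show that each point $x_{0}\in X$ has an open neighborhood on which the two assignments are morphisms of varieties. My plan is to exhibit, near $x_{0}$, explicit regular parametrizations of $\im g(x)$ and $\ker g(x)$ as graphs of linear maps whose entries are rational functions in the matrix entries of $g(x)$.

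First I would fix bases of $W$ and $W'$ so that $g(x)$ is represented by a matrix $M(x)$ whose entries are regular functions on $X$. Since $\rank g(x_{0})=r$, some $r\times r$ minor of $M(x_{0})$ is nonzero; reordering the bases, I may assume it is the top-left minor. The nonvanishing of a minor is an open condition, so there is an affine open $U\subseteq X$ containing $x_{0}$ on which this minor is invertible. After the reordering I write
\begin{equation*}
M(x)\;=\;\begin{pmatrix} A(x) & B(x) \\ C(x) & D(x)\end{pmatrix}
\end{equation*}
with $A(x)$ an invertible $r\times r$ block whose inverse $A(x)^{-1}$ has entries regular on $U$ (by Cramer's rule). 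Because $\rank M(x)=r$ throughout $U$, the Schur complement must vanish: $D(x)=C(x)A(x)^{-1}B(x)$.

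For \cref{item.grass.im}, the image of $g(x)$ is spanned by the first $r$ columns of $M(x)$, and these columns project isomorphically onto the first $r$ coordinates of $W'$. Normalizing, $\im g(x)$ is the row span of the $r\times \dim W'$ matrix $\bigl(I_{r}\,\bigm|\,C(x)A(x)^{-1}\bigr)$, whose free block $C(x)A(x)^{-1}$ is regular on $U$. This is exactly an affine chart of $\Grass(r,W')$, so $x\mapsto \im g(x)$ is a morphism $U\to\Grass(r,W')$. For \cref{item.grass.ker}, observe that the columns of the $\dim W\times(\dim W-r)$ matrix
\begin{equation*}
\begin{pmatrix} -A(x)^{-1}B(x) \\ I_{\dim W-r}\end{pmatrix}
\end{equation*}
lie in $\ker g(x)$ (using $D(x)=C(x)A(x)^{-1}B(x)$) and are clearly linearly independent, hence form a basis of $\ker g(x)$ since that kernel has dimension $\dim W-r$. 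This again exhibits $\ker g(x)$ as a point of a distinguished affine chart of $\Grass(\dim W-r,W)$ depending regularly on $x$.

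The main (and essentially only) subtle point is the constancy of rank: without it, neither the Schur complement identity nor the identification of $\ker g(x)$ with the displayed columns would be valid globally on $U$. Everything else is a formal consequence of Cramer's rule and the standard affine charts on the Grassmannian, so I expect no genuine obstacle beyond bookkeeping.
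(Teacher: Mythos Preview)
Your argument is correct. Both parts are handled by the standard trick of writing $\im g(x)$ and $\ker g(x)$ in a fixed affine chart of the Grassmannian, with the free block given by Cramer's rule; the Schur complement identity $D=CA^{-1}B$ is exactly where the constant-rank hypothesis enters, and you invoke it correctly.

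The paper's proof takes a somewhat different route. For the image, rather than normalizing to an affine chart, the paper covers $X$ by the open sets $U_I=\{x : g(x)(e_i),\,i\in I,\text{ are independent}\}$ and composes with the Pl\"ucker embedding, sending $x\in U_I$ to $\bigwedge_{i\in I} g(x)(e_i)\in\PP(\bigwedge^r W')$; this is visibly a morphism in the Pl\"ucker coordinates and the pieces glue. For the kernel, the paper does \emph{not} compute directly: instead it observes that $T\mapsto T^*$ is a morphism $\Hom(W,W')\to\Hom(W'^*,W^*)$, applies part~(1) to $g^*$, and then uses the isomorphism $\Grass(r,W^*)\to\Grass(d-r,W)$, $W_0\mapsto W_0^\perp$, together with $\ker g(x)=(\im g(x)^*)^\perp$. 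Your approach is more hands-on and entirely self-contained (no appeal to the orthogonal-complement isomorphism of Grassmannians, and no Pl\"ucker embedding); the paper's approach is slicker for the kernel, reducing it to the image case by pure duality rather than by an explicit basis of $\ker g(x)$.
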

\begin{pf}
\cref{item.grass.im}
Fix a basis $\{e_1,\ldots,e_d\}$ for $W$.  For each $r$-element subset $I \subseteq \{1,\ldots,d\}$, let 
$$
U_I  \;:= \; \{x \in X \; | \; \{g(x)(e_i) \; |\; i\in I\} \text{ is linearly independent}\}.
$$
The $U_I$'s provide an open cover of $X$. 

Let $p:\Grass(r,W') \stackrel{p}{\longrightarrow} \PP(\bigwedge^r W')$ be the Pl\"ucker embedding, 
$p(\operatorname{span}\{v_1,\ldots,v_r\}):=v_1 \wedge \cdots \wedge v_r$.

The  composition $U_I \longrightarrow \Grass(r,W') \stackrel{p}{\longrightarrow} \PP(\bigwedge^r W')$, 
$$
x  \; \mapsto \; \operatorname{span}\big\{ g(x)(e_i) \; | \; i \in I\} \;=\; \im g(x) \,  \mapsto \, \bigwedge_{i \in I} g(x)(e_i),
$$
is a morphism; the morphisms $U_{I}\to\Grass(r,W')$ agree on their intersections so glue to give a morphism $X \to\Grass(r,W')$.

\cref{item.grass.ker}
The linear map $\Hom(W,W') \to \Hom(W'^*,W^*)$, $T \mapsto T^*$, is a morphism so its composition with $g$; i.e., the map 
$g^*: X \to  \Hom(W'^*,W^*)$, $g^*(x):=g(x)^*$, is a morphism. Since $\ker g(x) = \big(\im g^*(x)\big)^\perp$,
the map $x \mapsto \ker g(x)$ is the composition 
\begin{equation}
\label{eq:grass.1}
x \mapsto g^*(x) \mapsto \im g^*(x) \mapsto \big(\im g^*(x)\big)^\perp.
\end{equation}
The right-most map in \cref{eq:grass.1} is given by the map ${\rm Grass}(r,W') \to {\rm Grass}(d-r,W'^*)$, $W_0 \mapsto W_0^\perp$;
this map is an isomorphism of algebraic varieties (see \cite[(11.8)]{Hassett}, for example) so
the map in \cref{eq:grass.1} is a morphism, as claimed.
\end{pf}

\begin{lemma}
\label{lem.fm.vec.sp}
Let $X$ be a variety over an algebraically closed field $\Bbbk$. 
Let $V$ be a $\Bbbk$-vector space with basis $\{v_1,\ldots,v_n\}$. Fix an integer $m\geq 0$ and let $\l_{ij}$, $1 \le i\le m$, $1\le j \le n$, be regular functions on $X$. For each closed point $x\in X$, define
$$
r_i(x) \; :=\; \sum_{j=1}^n \l_{ij}(x)v_j  
$$
for $1\le i \le m$,  $R(x):= \operatorname{span}\{r_i(x) \; | \; 1\le i \le m\}$, and  $d:=\max\{\dim R(x) \; | \; x\in X\}$.
\begin{enumerate}
\item\label{item.lem.fm.vec.sp.op}
$U := \{ x \in X \; | \; \dim R(x)=d\}$ is a non-empty Zariski-open subset of $X$.
\item\label{item.lem.fm.vec.sp.gr}
The map $f\colon U \to \Grass(d,V)$, $x\mapsto R(x)$,  is a morphism of algebraic varieties.
\item\label{item.lem.fm.vec.sp.ex}
If $X$ is a non-empty Zariski-open subset of a non-singular curve $\overline{X}$, then 
$f$ extends uniquely 
to a morphism $\overline{X}\to\Grass(d,V)$.
\end{enumerate}
\end{lemma}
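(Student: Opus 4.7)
The plan is to handle all three parts by reducing $\dim R(x)$ to the rank of the matrix of coefficients $M(x) := (\lambda_{ij}(x))$, then invoke \cref{prop.grass.1} on an appropriate rank-constant locus, and finally extend the resulting morphism using the standard principle that morphisms from a non-singular curve to a projective variety extend across missing points.

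For \cref{item.lem.fm.vec.sp.op}, I will observe that $\dim R(x)$ equals $\rank M(x)$. The locus $\{x \in X \mid \rank M(x) \geq d\}$ is the union, over all pairs $(I,J)$ with $I\subseteq\{1,\ldots,m\}$ and $J\subseteq\{1,\ldots,n\}$ of size $d$, of the non-vanishing loci of the corresponding $d\times d$ minors of $M$; since each minor is a regular function on $X$, this locus is Zariski-open. By the maximality of $d$, it coincides with $U$, and it is non-empty by definition of $d$.

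For \cref{item.lem.fm.vec.sp.gr}, let $\{e_{1},\ldots,e_{m}\}$ be the standard basis of $\Bbbk^{m}$ and define
\begin{equation*}
g\colon X \;\longrightarrow\; \Hom_{\Bbbk}(\Bbbk^{m},V), \qquad g(x)(e_{i}) \;:=\; r_{i}(x) \;=\; \sum_{j=1}^{n}\lambda_{ij}(x)v_{j}.
\end{equation*}
Since the matrix entries of $g(x)$ are the regular functions $\lambda_{ij}$, the map $g$ is a morphism of varieties. By construction, $\im g(x) = R(x)$ for every closed point $x\in X$, and on $U$ the rank of $g$ is constantly $d$. Applying \cref{prop.grass.1}\cref{item.grass.im} to the restriction $g|_{U}$ yields that $f\colon U\to\Grass(d,V)$, $x\mapsto R(x)$, is a morphism.

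For \cref{item.lem.fm.vec.sp.ex}, compose $f$ with the Pl\"ucker embedding $p\colon\Grass(d,V)\hookrightarrow\PP\!\left(\bigwedge^{d}V\right)$. Since $\overline{X}$ is a non-singular curve and $\PP\!\left(\bigwedge^{d}V\right)$ is projective, the morphism $p\circ f\colon U\to\PP\!\left(\bigwedge^{d}V\right)$ extends uniquely to a morphism $\widetilde{f}\colon\overline{X}\to\PP\!\left(\bigwedge^{d}V\right)$ by applying \cite[Prop.~I.6.8]{Hart} at each of the finitely many points of $\overline{X}-U$, exactly as in the proof of \cref{prop.ext.mor.rel}. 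Because $\Grass(d,V)$ is closed in $\PP\!\left(\bigwedge^{d}V\right)$, its preimage $\widetilde{f}^{\,-1}(\Grass(d,V))$ is closed in $\overline{X}$; it contains $U$, which is a non-empty open subset of the irreducible curve $\overline{X}$ and therefore dense. Hence $\widetilde{f}^{\,-1}(\Grass(d,V))=\overline{X}$, and $\widetilde{f}$ factors through $\Grass(d,V)$, giving the desired extension. Uniqueness is immediate from the separatedness of $\Grass(d,V)$ combined with the density of $U$ in $\overline{X}$.

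The only subtle point is the density of $U$ in $\overline{X}$ needed in \cref{item.lem.fm.vec.sp.ex}; this uses the convention that a \emph{curve} is irreducible (so that the non-empty open $U\subseteq X\subseteq\overline{X}$ is automatically dense) together with the non-emptiness of $U$ from \cref{item.lem.fm.vec.sp.op}. Everything else is a routine assembly of \cref{prop.grass.1}, the Pl\"ucker embedding, and the extension principle already employed in \cref{prop.ext.mor.rel}.
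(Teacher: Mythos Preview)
Your proof is correct and follows essentially the same approach as the paper: both reduce $\dim R(x)$ to the rank of the coefficient matrix $M(x)=(\lambda_{ij}(x))$, argue openness of $U$ via minors, invoke \cref{prop.grass.1}\cref{item.grass.im} for part \cref{item.lem.fm.vec.sp.gr}, and cite \cite[Prop.~I.6.8]{Hart} for part \cref{item.lem.fm.vec.sp.ex}. Your treatment of \cref{item.lem.fm.vec.sp.ex} is slightly more explicit than the paper's one-line citation---you spell out the passage through the Pl\"ucker embedding and the density/closedness argument---but this is just an elaboration of the same idea rather than a different route.
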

\begin{proof}
\cref{item.lem.fm.vec.sp.op}
Let $M_{m,n}(\Bbbk)$ denote the space of all $m \times n$ matrices, and let $M(x):=(\l_{ij}(x)) \in M_{m,n}(\Bbbk)$.
Since $R(x)$ is essentially  the image of the map ``left-multiplication by $M(x)$'', the dimension of $R(x)$ is the rank of $M(x)$. 
Since the rank of a matrix is $<s$ if and only if all its $s \times s$ minors vanish, the set of matrices having rank $<s$ is 
a Zariski-closed subset of $M_{m,n}(\Bbbk)$. 
Since the map $X\to M_{m,n}(\Bbbk)$ given by $x\mapsto M(x)$ is a morphism of algebraic varieties, the sets
\begin{align*}
Z_s    \; :=\; & \, \{x \in X \; | \; \rank M(x) <s\}
\\
 \; =\;  & \, \{x \in X \; | \; \dim R(x) <s\}
\end{align*}
are Zariski-closed subsets of $X$. The sets $U_s:=\{x \in X \; | \; \dim R(x) \ge s\}$ are therefore open subsets of $X$.
Since $\dim_\Bbbk(V)<\infty$,   $\max\{\dim R(x) \; | \;   x\in X \}$ exists and  $U=U_{d}$ 
  is a non-empty open subset of $X$. 

\cref{item.lem.fm.vec.sp.gr}
The map $x \mapsto M(x)$ is a morphism $U \to M_{m,n}(\Bbbk)$. Since $R(x)$ ``is'' the image of the map ``multiplication by $M(x)$'', the result follows from
\cref{prop.grass.1}\cref{item.grass.im}.

\cref{item.lem.fm.vec.sp.ex}
See \cite[Prop.~I.6.8]{Hart}.
\end{proof}

\begin{proposition}
\label{prop.comp.two.def}
Let $\cS \subseteq E$ be a finite subset, let  $d:=\max\{\dim\rel_{n,k}(E,\tau) \; | \; \tau \in E-\cS\}$, and let $U=\{\tau \in E-\cS \; | \; 
\dim\rel_{n,k}(E,\tau)=d\}$.
\begin{enumerate}
\item\label{item.prop.comp.two.def.gr} 
The function  $U \to \Grass(d,V^{\otimes 2})$, $\tau \mapsto \rel_{n,k}(E,\tau)$, extends in a unique way 
to a morphism $f\colon E\to\Grass(d,V^{\otimes 2})$.
\item\label{item.prop.comp.two.def.incl}
For all $\tau \in E$, $\rel_{n,k}(E,\tau) \subseteq f(\tau)$.
\item\label{item.prop.comp.two.def.gen}
The set $U$ is a non-empty Zariski-open subset of $E$.
\end{enumerate}
\end{proposition}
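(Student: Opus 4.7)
The plan is to lift the morphisms $L_{ij}\colon E \to \PP(V^{\otimes 2})$ of \cref{prop.ext.mor.rel} to nowhere-vanishing regular sections on local affine charts of $E$, apply \cref{lem.fm.vec.sp} in each chart, glue to obtain a morphism on the generic-rank locus $U$, and extend uniquely over the finitely many remaining points via \cite[Prop.~I.6.8]{Hart}. Part (ii) will then follow from the closedness of the incidence relation ``$L \subseteq W$'' in $\PP(V^{\otimes 2}) \times \Grass(d,V^{\otimes 2})$ together with the irreducibility of $E$.

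Concretely, let $I \subseteq \ZZ_n^2$ be the set of pairs for which $r_{ij}$ is not identically zero on $\CC - \frac{1}{n}\Lambda$, so that $\rel_{n,k}(E,\tau) = \sum_{(i,j)\in I} L_{ij}(\tau)$ for every $\tau \in E$. For each $(i,j)\in I$, pulling back the tautological sub-line-bundle of $\PP(V^{\otimes 2})$ along $L_{ij}$ yields a line subbundle $\cL_{ij} \hookrightarrow V^{\otimes 2} \otimes \cO_E$. Every $\tau_0 \in E$ has an affine open neighborhood $U_{\tau_0}$ on which each $\cL_{ij}$ is trivializable, and a choice of trivializations produces regular maps $\ell_{ij}\colon U_{\tau_0} \to V^{\otimes 2}$ with $\ell_{ij}(\tau)$ spanning $L_{ij}(\tau)$ pointwise. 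Applying \cref{lem.fm.vec.sp} to the family $\{\ell_{ij}\}_{(i,j)\in I}$ on each $U_{\tau_0}$ shows that the locus on which $\dim\rel_{n,k}(E,\tau)$ attains its local maximum is Zariski-open in $U_{\tau_0}$ and carries a canonical morphism into the appropriate Grassmannian via $\tau \mapsto \rel_{n,k}(E,\tau)$. Setting $d_{\max} := \max_{\tau \in E}\dim\rel_{n,k}(E,\tau)$ and gluing, the set $\{\tau \in E : \dim\rel_{n,k}(E,\tau) = d_{\max}\}$ is a non-empty Zariski-open subset of $E$; since $\cS$ is finite, intersecting with $E - \cS$ keeps it non-empty and forces $d_{\max} = d$, so $U$ is non-empty and Zariski-open, giving (iii). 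The local morphisms glue to a single morphism $U \to \Grass(d, V^{\otimes 2})$, which extends uniquely to $f\colon E \to \Grass(d, V^{\otimes 2})$ by \cite[Prop.~I.6.8]{Hart}; this proves (i).

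For (ii), consider the Zariski-closed incidence subvariety
\begin{equation*}
J \; := \; \bigl\{(L,W) \in \PP(V^{\otimes 2}) \times \Grass(d, V^{\otimes 2}) \; : \; L \subseteq W\bigr\}.
\end{equation*}
For each $(i,j) \in I$, the product morphism $(L_{ij}, f)\colon E \to \PP(V^{\otimes 2}) \times \Grass(d, V^{\otimes 2})$ maps $U$ into $J$ because $L_{ij}(\tau) \subseteq \rel_{n,k}(E,\tau) = f(\tau)$ for $\tau \in U$. Hence $(L_{ij}, f)^{-1}(J)$ is a Zariski-closed subset of the irreducible curve $E$ containing the non-empty open set $U$, so it is all of $E$; thus $L_{ij}(\tau) \subseteq f(\tau)$ for every $\tau \in E$, and summing over $(i,j) \in I$ yields $\rel_{n,k}(E,\tau) \subseteq f(\tau)$. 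The main delicate point is the construction and compatible gluing of the local lifts $\ell_{ij}$; once these are in hand, the rest is a routine combination of semi-continuity of rank, Hartshorne's extension result, and closedness of the incidence relation.
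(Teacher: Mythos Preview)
Your proof is correct and follows essentially the same strategy as the paper's: semi-continuity of rank via \cref{lem.fm.vec.sp}, extension over the missing points via \cite[Prop.~I.6.8]{Hart}, and a closedness-plus-density argument for the inclusion in part~(ii). Two minor differences are worth noting. First, you are more explicit than the paper about the need to locally lift the projective morphisms $L_{ij}$ to nowhere-vanishing regular sections $\ell_{ij}$ before invoking \cref{lem.fm.vec.sp}; the paper simply says it applies that lemma to ``the function $\tau\mapsto\rel_{n,k}(E,\tau)$'' on $X=E-\cS$, leaving the local lifting implicit. Second, for part~(ii) the paper realizes the incidence condition $L\subseteq W$ concretely by composing with the Pl\"ucker and Segre embeddings and pulling back the zero locus of the wedge map $(\bigwedge^d W)\otimes W\to\bigwedge^{d+1}W$, whereas you invoke the closed incidence subvariety $J\subseteq\PP(V^{\otimes 2})\times\Grass(d,V^{\otimes 2})$ directly; these are equivalent formulations of the same closedness fact.
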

\begin{proof}
\cref{item.prop.comp.two.def.gr}
The existence and uniqueness of $f$ follows from \cref{lem.fm.vec.sp} applied to $X=E-\cS \subseteq E= \overline{X}$, the function 
$\tau \mapsto \rel_{n,k}(E,\tau) \subseteq V^{\otimes 2}$,  and the integer $d$.  That lemma also tells us that $U$ is a non-empty Zariski-open subset of
$E-\cS$ and hence of $E$, thus proving \cref{item.prop.comp.two.def.gen}.

\cref{item.prop.comp.two.def.incl}
	It suffices to prove that $L_{ij}(\tau)\subseteq f(\tau)$ for all $\tau\in E$ and $(i,j)$ such that $r_{ij}$ is not identically zero.
	
	Write $W:=V^{\otimes 2}$.
	Let $Y$ be the zero locus in $\PP((\bigwedge^{d}W)\otimes W)$ of the linear map
	\begin{equation*}
		\left({\textstyle\bigwedge^{d}W}\right)\otimes W\to{\textstyle\bigwedge^{d+1}W},\qquad\omega\otimes v\mapsto\omega\wedge v.
	\end{equation*}
	The set 
	$Z:=\{\tau\in E\;|\;L_{ij}(\tau)\subseteq f(\tau)\}$ is the inverse image of $Y$ with respect to the composition
	\begin{equation*}
		\begin{tikzcd}
			E\ar[r,"{(f,\,{L}_{ij})}"] & \Grass(d,W)\times\PP(W)\ar[r,"p\times\id"] & \PP\left({\textstyle\bigwedge^{d}W}\right)\times\PP(W)\ar[r,"\iota"] & \PP\left(\left({\textstyle\bigwedge^{d}W}\right)\otimes W\right)
		\end{tikzcd}
	\end{equation*}
	where $p$ and $\iota$ are the Pl\"ucker and Segre embeddings, respectively. 
	Thus $Z$ is a Zariski-closed subset of $E$. If $\tau \in U\cap(E-\cS)$, then
	\begin{equation*}
		L_{ij}(\tau)\;\subseteq\;\rel_{n,k}(E,\tau)\;=\;f(\tau)
	\end{equation*}
	so $Z\supseteq U\cap(E-\cS)$. Since $U\cap(E-\cS)$ is a Zariski-dense subset of $E-\cS$,  $Z=E$.
\end{proof}

The extension $f$ does not depend on the choice of $\cS$: if  $\cS'$ were another finite subset and $f'$ the associated extension, then $f$ would equal 
$f'$ because $f$ and $f'$ agree on the dense open subset $E-(\cS \cup \cS')$. 

\begin{corollary}\label{cor.R=Rbar}
If $\dim \rel_{n,k}(E,\tau)  = \binom{n}{2}$ for all $\tau \in E$, then the morphism $f$ in   \cref{prop.comp.two.def} is 
$\tau \mapsto \rel_{n,k}(E,\tau)$.
\end{corollary}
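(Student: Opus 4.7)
The plan is to combine the containment from \cref{prop.comp.two.def}\cref{item.prop.comp.two.def.incl} with a dimension count. Under the hypothesis $\dim\rel_{n,k}(E,\tau)=\binom{n}{2}$ for all $\tau \in E$, I would take $\cS$ in \cref{prop.comp.two.def} to be the empty set (or any finite subset), so that the integer
\[
d \;=\; \max\{\dim\rel_{n,k}(E,\tau)\mid \tau\in E-\cS\}
\]
equals $\binom{n}{2}$, and the open set $U=\{\tau\in E-\cS\mid \dim\rel_{n,k}(E,\tau)=d\}$ is all of $E-\cS$.

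I would then invoke \cref{prop.comp.two.def}\cref{item.prop.comp.two.def.incl}, which gives the inclusion $\rel_{n,k}(E,\tau)\subseteq f(\tau)$ for every $\tau\in E$. By the hypothesis, $\dim\rel_{n,k}(E,\tau)=\binom{n}{2}$, and by construction $\dim f(\tau)=d=\binom{n}{2}$; hence the inclusion is in fact an equality. This shows $f(\tau)=\rel_{n,k}(E,\tau)$ for every $\tau\in E$, so the morphism $f$ is literally the map $\tau\mapsto \rel_{n,k}(E,\tau)$.

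There is essentially no obstacle here: the substance of the corollary lies entirely in \cref{prop.comp.two.def}, and the role of this corollary is just to observe that when the dimension is generically and everywhere the maximum $\binom{n}{2}$, the canonical extension agrees pointwise with the set-theoretic assignment. No further case analysis or computation is needed.
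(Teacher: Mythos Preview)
Your proof is correct and follows essentially the same approach as the paper: apply \cref{prop.comp.two.def} with $\cS=\varnothing$ and $d=\binom{n}{2}$, and observe that the inclusion $\rel_{n,k}(E,\tau)\subseteq f(\tau)$ from \cref{prop.comp.two.def}\cref{item.prop.comp.two.def.incl} is forced to be an equality by the dimension count.
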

\begin{proof} 
This follows from \cref{prop.comp.two.def} with $\cS=\varnothing$ and $d=\tbinom{n}{2}$ since the inclusion 
$\rel_{n,k}(E,\tau)\subseteq f(\tau)$ in \cref{prop.comp.two.def}\cref{item.prop.comp.two.def.incl} must be an equality.
\end{proof}

\subsection{Isomorphisms and anti-isomorphisms}
\label{subsec.first.props}

The next result is stated in \cite[\S 1, Rmk.~3]{FO89}. 
Polishchuk sketches a proof of it at \cite[p.~696]{pl98}; he views the isomorphism in it 
as a ``quantization'' of an isomorphism between certain  moduli spaces of vector bundles on $E$. 

The next two proofs use special cases of the equality 
\begin{equation}
\label{eq:simple.identity}
\frac{\theta_{\a+\b}(z_1+z_2)}{\theta_\a(z_1)\theta_\b(z_2)} \;=\; -\, \frac{\theta_{-\a-\b}(-z_1-z_2)}{\theta_{-\a}(-z_1)\theta_{-\b}(-z_2)}
\end{equation}
(which follows from the fact that $\theta_\a(-z)=-e(-nz+\frac{\a}{n})\theta_{-\a}(z)$).

Recall that $k'$ is the unique integer such that $n >k' \ge 1$ and $kk'=1$ in $\ZZ_n=\ZZ/n\ZZ$.

\begin{proposition}
\label{prop.isom.k.k'}
For all $\tau \in \CC$, there is an isomorphism $\Phi\colon Q_{n,k}(E,\tau)\to Q_{n,k'}(E,\tau)$ given by $\Phi(x_{i})=x_{ki}$.
\end{proposition}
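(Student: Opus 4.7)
Since $\gcd(k,n)=1$, the map $i\mapsto ki$ on $\ZZ_n$ is a bijection with inverse $i\mapsto k'i$, so $\Phi$ is a linear automorphism of $V$. It descends to a well-defined algebra isomorphism $Q_{n,k}(E,\tau)\to Q_{n,k'}(E,\tau)$ if and only if $(\Phi\otimes\Phi)(\rel_{n,k}(E,\tau)) = \rel_{n,k'}(E,\tau)$. Since the prime-to-$n$ inverse of $k'$ is $k$ itself, the same construction applied to $Q_{n,k'}(E,\tau)$ would produce $\Phi^{-1}$, so it suffices to prove the single inclusion $(\Phi\otimes\Phi)(\rel_{n,k}(E,\tau)) \subseteq \rel_{n,k'}(E,\tau)$; applying the result with $k$ and $k'$ interchanged yields the reverse inclusion.

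For $\tau\in\CC-\frac{1}{n}\Lambda$, I would verify this inclusion by direct computation using \cref{the-relns}. Applying $\Phi\otimes\Phi$ to $r_{ij}(\tau)$ gives
\[
(\Phi\otimes\Phi)(r_{ij}(\tau))\;=\;\sum_{r\in\ZZ_n}\frac{\theta_{j-i+(k-1)r}(0)}{\theta_{j-i-r}(-\tau)\theta_{kr}(\tau)}\,x_{k(j-r)}\,x_{k(i+r)}.
\]
The substitution $s=kr$ (equivalently $r=k's$ in $\ZZ_n$), together with the congruences $kk'\equiv 1$ and $(k-1)k'\equiv 1-k'\pmod{n}$, rewrites this sum so that its monomials $x_{kj-s}\,x_{ki+s}$ lie in the $n$-dimensional ``diagonal'' subspace $W_{k(i+j)}:=\operatorname{span}\{x_a\otimes x_b\mid a+b=k(i+j)\}$ that also contains every relation $r^{(k')}_{p,q}(\tau)$ with $p+q=k(i+j)$. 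The identification of $(\Phi\otimes\Phi)(r_{ij}(\tau))$ as a specific linear combination of these $r^{(k')}_{p,q}(\tau)$'s then reduces to matching theta function coefficients via an identity of the same general type as $\text{\cref{42905872}}=\text{\cref{42905873}}$ invoked for $k=1$ in the proof of \cref{prop.FO}.

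The case $\tau\in\frac{1}{n}\Lambda$ follows by extension. For each $(i,j)$ such that $r_{ij}$ is not identically zero, \cref{prop.ext.mor.rel} provides a morphism $L_{ij}^{(k)}\colon E\to\PP(V\otimes V)$ of algebraic varieties, and $\Phi\otimes\Phi$ induces an automorphism of $\PP(V\otimes V)$. The composition $(\Phi\otimes\Phi)\circ L_{ij}^{(k)}$ is thus a morphism on all of $E$ that agrees on the Zariski-dense open $E-E[n]$ with the corresponding morphism built from $L^{(k')}_{p,q}$'s (by the preceding step). Since, by \cref{def.Qnk}, $\rel_{n,k}(E,\tau)$ is the linear span of the $L_{ij}^{(k)}(\tau)$'s for every $\tau$, the inclusion $(\Phi\otimes\Phi)(\rel_{n,k}(E,\tau))\subseteq\rel_{n,k'}(E,\tau)$ propagates to all torsion points by uniqueness of algebraic extension.

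The hard part will be establishing the theta function identity underlying the generic-$\tau$ step: although the change-of-variables bookkeeping $s=kr$ is routine, recognizing the resulting sum as a genuine element of $\rel_{n,k'}(E,\tau)$—rather than merely of the ambient diagonal subspace $W_{k(i+j)}$—requires a nontrivial relation among theta functions that properly generalizes the $k=1$ identity \cref{42905872}=\cref{42905873}. The apparent absence of a short direct argument is presumably why Feigin and Odesskii left the verification to the reader.
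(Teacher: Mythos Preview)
Your extension argument to $\tau\in\frac{1}{n}\Lambda$ is fine, but the generic-$\tau$ step is overcomplicated and, as written, incomplete. You anticipate that $(\Phi\otimes\Phi)(r_{ij})$ will only be a \emph{linear combination} of several $r^{(k')}_{p,q}$'s, and that recognizing it as such requires a deep theta identity generalizing $\text{\cref{42905872}}=\text{\cref{42905873}}$. In fact it is a scalar multiple of a \emph{single} relation, and the only identity needed is the elementary one
\[
\frac{\theta_{\alpha+\beta}(z_1+z_2)}{\theta_\alpha(z_1)\theta_\beta(z_2)}\;=\;-\,\frac{\theta_{-\alpha-\beta}(-z_1-z_2)}{\theta_{-\alpha}(-z_1)\theta_{-\beta}(-z_2)},
\]
which is an immediate consequence of $\theta_\alpha(-z)=-e(-nz+\tfrac{\alpha}{n})\theta_{-\alpha}(z)$ from \cref{prop.official.theta.basis}\cref{item.official.theta.basis.minus}.

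The substitution $s=kr$ alone does not line things up; the trick is to also swap the roles of $i$ and $j$. Set $i'=kj$, $j'=ki$, and $r'=-k(j-i-r)$. Then $j'-r'=k(j-r)$ and $i'+r'=k(i+r)$, so the monomials match exactly, while the elementary identity above (applied with $\alpha=j-i-r$, $\beta=kr$, $z_1=-\tau$, $z_2=\tau$) gives $c_{ijkr}(\tau)=-c_{i'j'k'r'}(\tau)$ term by term. Hence $(\Phi\otimes\Phi)(r^{(k)}_{ij}(\tau))=-r^{(k')}_{kj,ki}(\tau)$. This makes the extension to torsion points trivial: on $E-E[n]$ the morphisms $L^{(k)}_{ij}$ and $L^{(k')}_{kj,ki}$ agree, so they agree on all of $E$. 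There is no ``hard part''; the verification Feigin and Odesskii left to the reader really is routine once you spot the right reindexing.
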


\begin{proof}
Let $\Phi$ be the automorphism of $\CC\langle x_0,\ldots,x_{n-1}\rangle$ defined by $\Phi(x_{i})=x_{k'i}$ for all $i \in\ZZ_n$.
We will show that $\Phi$ sends the relations for  $Q_{n,k}(E,\tau)$ bijectively to the relations for $Q_{n,k'}(E,\tau)$.

Assume $\tau\in\CC-\frac{1}{n}\Lambda$. For all $i,j,r \in \ZZ_n$, let 
$$
c_{ijkr}(\tau) \; =\; \frac{\theta_{j-i+(k-1)r}(0)}{\theta_{j-i-r}(-\tau)\theta_{kr}(\tau)}
\qquad \text{and} \qquad
r_{ijk}(\tau) \; =\; \sum_{r \in \ZZ_n}  c_{ijkr}(\tau)  x_{j-r}x_{i+r}.
$$

Let $i'=kj$, $j'=ki$, and $r'=-k(j-i-r)$. Then
\begin{align*}
c_{ijkr}(\tau) & \; =\; \frac{\theta_{j-i+(k-1)r}(0)}{\theta_{j-i-r}(-\tau)\theta_{kr}(\tau)}
\\
& \;=\;  -\, \frac{\theta_{-(j-i+(k-1)r)}(0)}{\theta_{-(j-i-r)}(\tau) \theta_{-kr}(-\tau)}  \qquad \text{by \cref{eq:simple.identity}}
\\
& \;=\; -\, \frac{\theta_{j'-i'+(k'-1)r'}(0)}{\theta_{j'-i'-r'}(-\tau)\theta_{k'r'}(\tau)}
\\
& \;=\; - \, c_{i'j'k'r'}(\tau).
\end{align*}
Hence
\begin{align*}
\Phi(r_{ijk}(\tau)) & \; =\; \sum_{r \in \ZZ_n} c_{ijkr}(\tau)x_{k(j-r)}x_{k(i+r)}
\\
 & \; =\; -  \sum_{r' \in \ZZ_n}  c_{i'j'k'r'}(\tau)  x_{j'-r'} x_{i'+r'}
 \\
& \;=\; - \, r_{i'j'k'}(\tau).
\end{align*}

Denote by ${L}_{ijk}$ the morphism ${L}_{ij}\colon E\to\PP(V\otimes V)$ for $Q_{n,k}(E,\tau)$. Thus ${L}_{ijk}$ is the unique morphism 
such that 
$$
{L}_{ijk}(\text{the image of $\tau$ in $E$}) \; =\;  \CC. r_{ijk}(\tau)
$$ 
when $\tau\in \CC-\frac{1}{n}\Lambda$. 
The above computation shows that ${L}_{ijk}(\tau)={L}_{i'j'k'}(\tau)$ when $\tau\in  \CC-\frac{1}{n}\Lambda$
whence $L_{ijk}=L_{i'j'k'}$ as morphisms from $E$. 
Therefore $\Phi$ descends to an isomorphism $Q_{n,k}(E,\tau)\to Q_{n,k'}(E,\tau)$.
\end{proof}

\begin{proposition}
\label{prop.anti.isom} 
Let $N \in \operatorname{GL}(V)$ be the map $N(x_\a)=x_{-\a}$. For all $\tau \in \CC$, $N$ extends to algebra isomorphisms
$Q_{n,k}(E,\tau)\to Q_{n,k}(E,-\tau)$ and
 $Q_{n,k}(E,\tau)\to Q_{n,k}(E,\tau)^{\rm op}$. In particular,
   $$
  Q_{n,k}(E,\tau)  \; \cong \;  Q_{n,k}(E,\tau)^{\rm op}  \;=\;  Q_{n,k}(E,-\tau).
  $$
 \end{proposition}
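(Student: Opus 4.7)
The plan is to establish both isomorphisms in parallel and then deduce the equality from them. Let $\widetilde{N}, \widetilde{N}^{\rm op}\colon TV \to TV$ denote, respectively, the algebra homomorphism and the anti-homomorphism extending $x_\alpha \mapsto x_{-\alpha}$; on $V \otimes V$ one has $\widetilde{N}^{\rm op} = \sigma \circ \widetilde{N}$, where $\sigma$ is the swap of tensor factors. The first isomorphism amounts to $\widetilde{N}(\rel_{n,k}(E,\tau)) = \rel_{n,k}(E,-\tau)$, while the second amounts to $\widetilde{N}^{\rm op}$ preserving $\rel_{n,k}(E,\tau)$; combining these yields $\sigma(\rel_{n,k}(E,\tau)) = \rel_{n,k}(E,-\tau)$, which, under the canonical identification of the opposite algebra with $TV/(\sigma(\rel_{n,k}(E,\tau)))$, gives the final equality $Q_{n,k}(E,\tau)^{\rm op} = Q_{n,k}(E,-\tau)$.

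I would first work in the range $\tau \in \CC - \tfrac{1}{n}\Lambda$ where $r_{ij}(\tau)$ is given by \cref{the-relns}. Substituting $s = -r$ in the defining sum, the coefficient of $x_{-i+s}x_{-j-s}$ in $\widetilde{N}^{\rm op}(r_{ij}(\tau))$ equals $\theta_{j-i-(k-1)s}(0)/(\theta_{j-i+s}(-\tau)\theta_{-ks}(\tau))$, which is manifestly the corresponding coefficient in $r_{-j,-i}(\tau)$ because the substitution $(i,j)\mapsto (-j,-i)$ leaves $j-i$ invariant; hence $\widetilde{N}^{\rm op}(r_{ij}(\tau)) = r_{-j,-i}(\tau)$. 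For $\widetilde{N}$, one compares the coefficient of $x_{-j-s}x_{-i+s}$ in $\widetilde{N}(r_{ij}(\tau))$ with that in $r_{-i,-j}(-\tau)$ by applying the identity $\theta_\alpha(-z) = -e(-nz + \alpha/n)\theta_{-\alpha}(z)$ of \cref{prop.official.theta.basis}\cref{item.official.theta.basis.minus} to each of the three theta factors in turn; the three resulting sign factors multiply to $-1$ and the three exponents cancel identically --- the $\pm n\tau$ contributions cancel in pairs, and the remaining $s$-dependent contributions cancel by the arithmetic identity $(j-i-(k-1)s) - (j-i+s) + ks = 0$ --- giving $\widetilde{N}(r_{ij}(\tau)) = -r_{-i,-j}(-\tau)$.

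To extend to arbitrary $\tau \in \CC$, I would invoke the morphism-extension strategy from \cref{prop.ext.mor.rel}: the two identities above hold as equalities of morphisms $E - E[n] \to \PP(V \otimes V)$, namely $\CC \cdot \widetilde{N}(r_{ij}(\tau)) = L_{-i,-j}(-\tau)$ and $\CC \cdot \widetilde{N}^{\rm op}(r_{ij}(\tau)) = L_{-j,-i}(\tau)$, and both sides extend uniquely to morphisms $E \to \PP(V \otimes V)$, so the equalities persist for every $\tau$; summing over $(i,j)$ then gives $\widetilde{N}(\rel_{n,k}(E,\tau)) = \rel_{n,k}(E,-\tau)$ and $\widetilde{N}^{\rm op}(\rel_{n,k}(E,\tau)) = \rel_{n,k}(E,\tau)$ for all $\tau$. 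The main obstacle is the theta-function bookkeeping in the $\widetilde{N}$ computation, where one must verify simultaneously the sign cancellation and the exponent cancellation from three applications of the transformation law; everything else is routine.
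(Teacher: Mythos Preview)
Your proof is correct and follows essentially the same strategy as the paper's: verify the two identities $\widetilde{N}(r_{ij}(\tau)) = -r_{-i,-j}(-\tau)$ and $\widetilde{N}^{\rm op}(r_{ij}(\tau)) = r_{-j,-i}(\tau)$ on $\CC-\tfrac{1}{n}\Lambda$ via the transformation law $\theta_\alpha(-z)=-e(-nz+\tfrac{\alpha}{n})\theta_{-\alpha}(z)$, then pass to all $\tau$ by the unique-extension argument of \cref{prop.ext.mor.rel}. The only differences are cosmetic: the paper packages the three applications of the transformation law into the single identity \cref{eq:simple.identity}, and it proves $r_{ij}(-\tau)=-r^{\rm op}_{ji}(\tau)$ directly (which also requires the theta identity) rather than deducing the equality $Q_{n,k}(E,\tau)^{\rm op}=Q_{n,k}(E,-\tau)$ from the two $N$-isomorphisms as you do. Your route to the anti-isomorphism is in fact slightly cleaner, since the observation that $(i,j)\mapsto(-j,-i)$ preserves $j-i$ gives $\widetilde{N}^{\rm op}(r_{ij}(\tau)) = r_{-j,-i}(\tau)$ with no theta computation at all.
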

 \begin{proof}
Assume $\tau\in\CC-\frac{1}{n}\Lambda$. By definition, $Q_{n,k}(E,\tau)^{\rm op}$ is $\CC\langle x_0,\ldots,x_{n-1}\rangle$
modulo the relations
$$
 r_{ij}^{\rm op}(\tau) \; :=\;  \sum_{r \in \ZZ_n} \frac{\theta_{j-i+(k-1)r}(0)}{\theta_{j-i-r}(-\tau)\theta_{kr}(\tau)}\, x_{i+r} x_{j-r}, \qquad (i,j) \in \ZZ_n^2.
$$
We have
\begin{align*}
r_{ij}(-\tau) & \; =\;  \sum_{s \in \ZZ_n} \frac{\theta_{j-i+(k-1)s}(0)}{\theta_{j-i-s}(\tau)\theta_{ks}(-\tau)}\, x_{j-s}  x_{i+s} 
\\
& \; =\;  
-\, \sum_{s \in \ZZ_n} 
\frac{\theta_{-j+i-(k-1)s}(0)}   
{ \theta_{-j+i+s}(-\tau) \theta_{-ks}(\tau)} 
x_{j-s}x_{i+s}
\qquad \text{by \cref{eq:simple.identity}}
\\
& \; =\;  
-\, \sum_{r \in \ZZ_n} 
\frac{\theta_{i-j+(k-1)r}(0)}   
{ \theta_{i-j-r}(-\tau) \theta_{kr}(\tau)} 
x_{j+r}x_{i-r}
\qquad \text{by $r:=-s$}
\\
& \;=\; - \, r_{ji}^{\rm op}(\tau).
\end{align*}
Hence $Q_{n,k}(E,-\tau)=Q_{n,k}(E,\tau)^{\rm op}$ for all $\tau \in \CC-\frac{1}{n}\Lambda$.

To show that the map $N:V \to V$, $N(x_\a)=x_{-\a}$, extends to an isomorphism $Q_{n,k}(E,\tau) \to Q_{n,k}(E,-\tau)$ we must show that $\rel_{n,k}(E,-\tau)=\operatorname{span}\{N(r_{ij}(\tau))\}$. This is true because
\begin{align*}
N(r_{ij}(\tau))  & \;=\; \sum_{r \in \ZZ_n} \frac{\theta_{j-i+(k-1)r}(0)}{\theta_{j-i-r}(-\tau)\theta_{kr}(\tau)}\, N(x_{j-r}) N(  x_{i+r}) 
\\
& \;=\;  - \, \sum_{r \in \ZZ_n} \frac{\theta_{-j+i-(k-1)r}(0)}{\theta_{-j+i+r}(\tau)\theta_{-kr}(-\tau)}\, x_{-j+r} x_{-i-r}
\qquad \text{by \cref{eq:simple.identity}}
\\
& \;=\;  - \, \sum_{s \in \ZZ_n} \frac{\theta_{-j+i+(k-1)s}(0)}{\theta_{-j+i-s}(\tau)\theta_{ks}(-\tau)}\, x_{-j-s} x_{-i+s}
\qquad \text{by $s:=-r$}
\\
& \;=\; -\, r_{-i,-j}(-\tau).
\end{align*}
Therefore $N$ extends to an isomorphism $Q_{n,k}(E,\tau) \to Q_{n,k}(E,-\tau)$ for all $\tau \in \CC-\frac{1}{n}\Lambda$.

Let $\sigma$ be the automorphism of $\PP(V \otimes V)$ that sends $\CC.x_\a \otimes x_\b$ to  $\CC.x_{\b} \otimes x_{\a}$. The equality $r_{ij}(-\tau)=r_{ji}^{\rm op}(\tau)$ implies that the morphisms $E\to\PP(V\otimes V)$ given by
$\tau\mapsto {L}_{ij}(-\tau)$ and $\tau\mapsto\sigma({L}_{ji}(\tau))$ 
agree on $E-E[n]$. Since the locus where two morphisms agree is closed,  ${L}_{ij}(-\tau)=\sigma({L}_{ji}(\tau))$ for all $\tau\in E$.
Hence $Q_{n,k}(E,-\tau)=Q_{n,k}(E,\tau)^{\rm op}$ for all $\tau \in E$.

The isomorphism $N$ induces an automorphism $N^{\otimes 2}$ of $\PP(V \otimes V)$ that sends $\CC.x_\a \otimes x_\b$ to 
$\CC.x_{-\a} \otimes x_{-\b}$. The equality  $N^{\otimes 2}(r_{ij}(\tau)) = - r_{-i,-j}(-\tau)$ can be interpreted as saying that
$N^{\otimes 2}({L}_{ij}(\tau))={L}_{-i,-j}(-\tau)$ on $E-E[n]$ so, by the same reasoning as before, this equality holds
for all $\tau \in E$. Hence $N^{\otimes 2}(\rel_{n,k}(\tau))=\rel_{n,k}(-\tau)$.
\end{proof}

\subsubsection{}
\label{ssect.TVdB}
The previous result was proved by Tate and Van den Bergh \cite[Prop.~4.1.1, Rmk.~4.1.2]{TvdB96} when $k=1$. 
They also observe in their Proposition 4.1.1 that   $Q_{n,1}(E,\tau)\cong Q_{n,1}(E,\mu(\tau))$ when $\mu:E \to E$
is an automorphism given by complex multiplication.

\subsection{The Heisenberg group acts as automorphisms of $Q_{n,k}(E,\tau)$}
\label{subsec.Heis.group}

As observed in \cref{lem.Hn.1},  the Heisenberg group generators  act on the basis for $\Theta_n(\L)$ as 
$S\cdot \theta_\a= e\big( \tfrac{\a}{n} \big) \theta_{\a}$, and $T\cdot \theta_\a= \theta_{\a+1}$, and the commutator 
$\epsilon=[S,T]$ acts   as multiplication by 
$$
\omega \;:=\; e\big(\tfrac{1}{n}\big).
$$
We now identify the vector space $V=\operatorname{span}\{x_0,\ldots,x_{n-1}\}$ generating $Q_{n,k}(E,\tau)$ with $\Theta_n(\L)$ by identifying
$x_\a$ with $\theta_\a$. Thus, $V$ also becomes a representation of $H_n$ with the action given by \cref{Hberg.action.on.V} below.
We extend the action of $H_n$ on $V$ to $TV$ in the natural way.

\begin{proposition}\label{prop.qnk.act}
The Heisenberg group $H_n$
acts as degree-preserving $\CC$-algebra automorphisms of $Q_{n,k}(E,\tau)$ by 
\begin{equation}
\label{Hberg.action.on.V}
S \cdot x_i = \omega^i x_i, \qquad  T \cdot x_i = x_{i+1}, \qquad  \epsilon \cdot x_i = \omega x_i.
\end{equation}
\end{proposition}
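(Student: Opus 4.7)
The plan is to prove the proposition in three stages: (i) check that the formulas \cref{Hberg.action.on.V} define a representation of $H_n$ on $V$; (ii) for $\tau$ outside $\frac{1}{n}\Lambda$, verify by direct substitution into \cref{the-relns} that the induced action on $TV$ preserves $\rel_{n,k}(E,\tau)$; (iii) extend this to arbitrary $\tau$ via the morphisms $L_{ij}\colon E\to\PP(V^{\otimes 2})$ from \cref{ssect.Lij}.

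For (i), verifying the relations $S^n=T^n=\epsilon^n=1$, $\epsilon=[S,T]$, and $[S,\epsilon]=[T,\epsilon]=1$ on the basis $\{x_0,\ldots,x_{n-1}\}$ is a one-line check: $S^n x_i=\omega^{ni}x_i=x_i$, $T^n x_i=x_{i+n}=x_i$, $\epsilon$ is a scalar hence central, and
\[
(ST)\cdot x_i \;=\; \omega^{i+1} x_{i+1} \;=\; \omega\cdot (TS)\cdot x_i,
\]
so $[S,T]$ acts as $\omega\cdot\id=\epsilon$. This is consistent with the $H_n$-action on $\Theta_n(\Lambda)$ recorded in \cref{lem.Hn.1} under the identification $x_\a\leftrightarrow\theta_\a$.

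For (ii), fix $\tau\in\CC-\frac{1}{n}\Lambda$. Applying $S$ to $r_{ij}(\tau)$ pulls out a scalar $\omega^{(j-r)+(i+r)}=\omega^{i+j}$ independent of $r$, so
\[
S\cdot r_{ij}(\tau)\;=\;\omega^{i+j}\, r_{ij}(\tau).
\]
Applying $T$ shifts both indices of each monomial by $1$; since the coefficient $\theta_{j-i+(k-1)r}(0)/\theta_{j-i-r}(-\tau)\theta_{kr}(\tau)$ depends on $i,j$ only through $j-i$, the result is exactly the relation $r_{i+1,j+1}(\tau)$:
\[
T\cdot r_{ij}(\tau)\;=\; r_{i+1,j+1}(\tau).
\]
Finally $\epsilon$ acts as the scalar $\omega^2$ on $V\otimes V$. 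Hence $H_n$ preserves $\rel_{n,k}(E,\tau)$ and descends to degree-preserving algebra automorphisms of $Q_{n,k}(E,\tau)$ in this case.

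For (iii), let $\tau\in\tfrac{1}{n}\Lambda$. Each $g\in H_n$ acts on $V^{\otimes 2}$ as an invertible linear map, inducing an automorphism of $\PP(V^{\otimes 2})$; thus $g\cdot L_{ij}\colon E\to\PP(V^{\otimes 2})$ is again a morphism. The formulas from (ii) yield the pointwise identities $S\cdot L_{ij}(\tau)=L_{ij}(\tau)$, $T\cdot L_{ij}(\tau)=L_{i+1,j+1}(\tau)$, and $\epsilon\cdot L_{ij}(\tau)=L_{ij}(\tau)$ on the dense open subset $E-E[n]$; since the locus where two morphisms into a separated variety agree is closed, these identities hold on all of $E$. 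By \cref{def.Qnk} the space $\rel_{n,k}(E,\tau)$ is spanned by the $L_{ij}(\tau)$, so it is $H_n$-stable. The only mild obstacle is ensuring the extension argument passes through cleanly, but this is precisely the kind of density/closedness reasoning already used in \cref{prop.isom.k.k',prop.anti.isom}, and it applies here without modification.
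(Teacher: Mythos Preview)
Your proof is correct and follows essentially the same approach as the paper: both compute $S\cdot r_{ij}=\omega^{i+j}r_{ij}$ and $T\cdot r_{ij}=r_{i+1,j+1}$ directly, deducing that $\rel_{n,k}(E,\tau)$ is $H_n$-stable. Your step (iii) makes explicit the passage to all $\tau\in E$ via the $L_{ij}$ and the density/closedness argument, which the paper leaves implicit in its one-line assertion ``for all $\tau\in E$.''
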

\begin{proof}
It is easy to show that $S \cdot r_{ij}=\omega^{i+j} r_{ij}$ and $T\cdot r_{ij}=r_{i+1,j+1}$.
Hence $\rel_{n,k}(E,\tau)$ is an $H_n$-subrepresentation of $V \otimes V$ for all $\tau\in E$ and therefore $H_n$ acts as degree-preserving $\CC$-algebra automorphisms of $TV/(\rel_{n,k}(E,\tau))$.
\end{proof}

\subsection{Another set of relations for $Q_{n,k}(E,\tau)$}

One drawback to the presentation of $Q_{n,k}(E,\tau)$ via the relations in \cref{the-relns} is that both $i$ and $j$ appear in  the indices of
the monomials $x_{j-r}x_{i-r}$ and in the indices of the structure constants that are the coefficients of those monomials. 
In particular, if $j-i = j'-i'$, then $r_{ij}$ and $r_{i'j'}$ involve the same monomials 
but it is not immediately clear which coefficients occur before the same monomial; for example, if $j-i = j'-i'=0$ some calculation is required to compare the coefficients of $x_0^2$ in each relation.
There is, however, a different set of relations for $Q_{n,k}(E,\tau)$ with the property that the new relation indexed by 
$(i,j)$ has the following property: 
only $i$ is involved in indices of the structure constants and only $j$ is involved in the indices of the quadratic monomials 
$x_\a x_\b$. 
Ultimately, one sees there are row vectors $A_0,\ldots,A_{n-1}$ in $\CC^n$ and column vectors $B_0,\ldots,B_{n-1}$ 
of quadratic monomials such that the   new relation indexed by $(i,j)$ is the product $A_iB_j$. 

We are grateful to Kevin De Laet for allowing us to include the next result.

\begin{proposition}[De Laet]
\label{prop.new.rel}
Assume $\tau\in\CC-\frac{1}{n}\Lambda$. For each $(i,j) \in \ZZ_n^2$, let 
\begin{equation}\label{eq:rij}
  R_{ij} \; :=\; \sum_{r\in \ZZ_n} e\left(\tfrac{r}{n}\right)  \frac{\theta_{-(k+1)i+(k-1)r}(0)}{\theta_{r+i}(\tau)\theta_{k(r-i)}(\tau)}
\,  x_{j-r}x_{j+r}  
\end{equation}
and 
\begin{equation}\label{eq:r'ij}
R'_{ij} \;:=\; \sum_{r \in \ZZ_n}  e\left(\tfrac{r}{n}\right)  
\frac{\theta_{k-(k+1)i+(k-1)r}(0)}{\theta_{r+i}(\tau)\theta_{k(r-i+1)}(\tau)} \,  x_{j-r}x_{j+r+1}.  
\end{equation}
\begin{enumerate}
\item\label{item.prop.new.rel.act.S}
$(S\otimes S)(R_{ij})=e(\frac{2j}{n})R_{ij}$ and $(S\otimes S)(R'_{ij})=e(\frac{2j+1}{n})R'_{ij}$.
\item\label{item.prop.new.rel.act.T}
$(T\otimes T)(R_{ij})=R_{i,j+1}$ and $(T\otimes T)(R'_{ij})=R'_{i,j+1}$.
\item\label{item.prop.new.rel.odd}
If $n$ is odd, then $\rel_{n,k}(E,\tau)=\operatorname{span}\{R_{ij} \;|\; i,j \in \ZZ_n\}=\operatorname{span}\{R'_{ij} \;|\; i,j \in \ZZ_n\}$.
\item\label{item.prop.new.rel.even}
If $n$ is even, then $\rel_{n,k}(E,\tau) = \operatorname{span}\{R_{ij},\, R'_{ij} \;|\; i,j \in \ZZ_n\}$.
\item\label{item.prop.new.rel.dupe}
If $n$ is even, then $R_{i+\frac{n}{2},j+\frac{n}{2}}= - R_{ij}$ and $R'_{i+\frac{n}{2},j+\frac{n}{2}}= - R'_{ij}$.
\end{enumerate}
\end{proposition}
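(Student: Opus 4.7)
The plan is to dispose of parts (i) and (ii) by a direct symbolic computation: in every term of $R_{ij}$ the quadratic monomial $x_{j-r}x_{j+r}$ has total index $2j$ (independent of $r$) so $S\otimes S$ acts as multiplication by $\omega^{2j}=e(2j/n)$; the same monomials get shifted to $x_{(j+1)-r}x_{(j+1)+r}$ by $T\otimes T$, and the coefficients in front depend only on $i,r$, not on $j$, which gives $(T\otimes T)(R_{ij})=R_{i,j+1}$. The computation for $R'_{ij}$ is identical except that the total index becomes $2j+1$.

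The heart of the argument is to identify, for each pair $(i,j)\in\ZZ_n^2$, an explicit nonzero scalar $c_i\in\CC^{\times}$ and indices $(i',j')\in\ZZ_n^2$ such that
\[
R_{ij}\;=\;c_i\,r_{j+i,\,j-i}(\tau)\qquad\text{and}\qquad R'_{ij}\;=\;c_i\,r_{j+i,\,j-i+1}(\tau),
\qquad c_i\;:=\;-e\!\left(-n\tau-\tfrac{i}{n}\right).
\]
I will verify this by matching coefficients of each monomial $x_{j-r}x_{j+r}$ (resp.\ $x_{j-r}x_{j+r+1}$) on the two sides: setting the summation index of $r_{j+i,\,j-i}$ appropriately, the numerators $\theta_{-(k+1)i+(k-1)r}(0)$ already agree, and the denominators differ only in that $r_{j+i,j-i}$ contains $\theta_{-i-r}(-\tau)$ whereas $R_{ij}$ contains $\theta_{r+i}(\tau)$. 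The reflection identity $\theta_\alpha(-z)=-e(-nz+\tfrac{\alpha}{n})\theta_{-\alpha}(z)$ from \cref{prop.official.theta.basis}\cref{item.official.theta.basis.minus} converts one into the other and produces the factor $-e(-n\tau-(i+r)/n)$, which after combining with the explicit $e(r/n)$ appearing in $R_{ij}$ collapses to the $r$-independent scalar $c_i$. That the prefactor comes out independent of $r$ is what makes the proportionality work, and this is the main nontrivial point of the proof.

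Once this identification is in hand, parts (iii) and (iv) are immediate from the arithmetic of $\ZZ_n$: as $(i,j)$ ranges over $\ZZ_n^2$, the pair $(j+i,\,j-i)$ ranges over $\{(a,b)\in\ZZ_n^2 : a+b\equiv 2j\pmod n\}$, i.e.\ over all of $\ZZ_n^2$ when $n$ is odd (because $2$ is invertible) and over the ``even parity'' locus $\{a+b\text{ even}\}$ when $n$ is even; the analogous statement for $R'_{ij}$ covers either all of $\ZZ_n^2$ (odd $n$) or the ``odd parity'' locus (even $n$). Combining the $R$-span with the $R'$-span for even $n$ therefore recovers every $r_{i'j'}(\tau)$, and part (iv) follows; part (iii) follows already from either family alone.

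Part (v) is then a clean corollary of the identification above. For even $n$ we have
\[
R_{i+\frac{n}{2},\,j+\frac{n}{2}}\;=\;c_{i+n/2}\,r_{j+i+n,\,j-i}\;=\;c_{i+n/2}\,r_{j+i,\,j-i},
\]
and $c_{i+n/2}=c_i\cdot e(-\tfrac{1}{2})=-c_i$, giving $R_{i+n/2,\,j+n/2}=-R_{ij}$; the same computation with $R'$ in place of $R$ yields the other identity. The only real obstacle in this plan is the coefficient bookkeeping in the proof of the proportionality $R_{ij}=c_i r_{j+i,j-i}$, and in particular verifying that the ``stray'' factors $e(r/n)$ in the definition of $R_{ij}$ are precisely what is needed to absorb the $r$-dependence arising from the reflection formula.
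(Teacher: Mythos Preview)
Your proposal is correct and follows essentially the same route as the paper: both establish the proportionality $R_{ij}\propto r_{j+i,\,j-i}$ (and $R'_{ij}\propto r_{j+i,\,j-i+1}$) via the reflection identity $\theta_\alpha(-z)=-e(-nz+\tfrac{\alpha}{n})\theta_{-\alpha}(z)$, and then read off parts (iii) and (iv) from the parity of $a+b$ in $\ZZ_n$.

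The one genuine difference is in part (v). The paper, having worked only up to proportionality (writing $R_{ij}\equiv r_{j+i,j-i}$), cannot directly compare $R_{i+n/2,\,j+n/2}$ and $R_{ij}$ via that relation; it therefore proves (v) by a separate direct coefficient comparison, matching the $x_{j-r}x_{j+r}$ term of $R_{ij}$ against the $x_{j+n/2-r'}x_{j+n/2+r'}$ term of $R_{i+n/2,\,j+n/2}$ with $r'=r+n/2$. Your approach is slicker: because you track the exact scalar $c_i=-e(-n\tau-\tfrac{i}{n})$ and verify $R_{ij}=c_i\,r_{j+i,j-i}$ (not merely $\equiv$), part (v) falls out of the single observation $c_{i+n/2}=e(-\tfrac12)c_i=-c_i$ combined with $r_{j+i+n,\,j-i}=r_{j+i,\,j-i}$. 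The bookkeeping you flag as ``the only real obstacle'' does work out exactly as you describe, and the $r$-independence of the resulting scalar is indeed the key point.
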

\begin{proof}
If $v$ and $w$ are non-zero scalar multiples of each other we write $v \equiv w$.

Statements \cref{item.prop.new.rel.act.S} and \cref{item.prop.new.rel.act.T} are immediate.

Since $\theta_\a(-z)=-e\big(-nz+\tfrac{\a}{n}\big)\theta_{-\a}(z)$,
\begin{align*}
r_{i,-i} & \;=\; 
 \sum_{r \in \ZZ_n} \frac{\theta_{-2i+(k-1)r}(0)}{\theta_{-2i-r}(-\tau)\theta_{kr}(\tau)} \,  x_{-i-r}x_{i+r}\\
         &\;=\; 
         \sum_{r \in \ZZ_n} \frac{\theta_{-2i+(k-1)r}(0)}{-e(-n\tau-\frac{2i+r}{n})\theta_{2i+r}(\tau)\theta_{kr}(\tau)}\,  x_{-i-r}x_{i+r}
         \\
         & \; \equiv\;
          \sum_{r \in \ZZ_n} e\left(\tfrac{r}{n}\right) \frac{\theta_{-2i+(k-1)r}(0)}{\theta_{2i+r}(\tau)\theta_{kr}(\tau)} \, x_{-i-r}x_{i+r}
         \\
         & \; =\;
         \sum_{r'\in \ZZ_n} e\left(\tfrac{r'-i}{n}\right)   
          \frac{\theta_{-(k+1)i+(k-1)r'}(0)}{\theta_{r'+i}(\tau)\theta_{k(r'-i)}(\tau)} \, x_{-r'}x_{r'}\qquad\text{(after setting $r'=i+r$)}
         \\
         &\; \equiv\; R_{i0}.
\end{align*}
Using \cref{item.prop.new.rel.act.T} and $T\cdot r_{ij}=r_{i+1,j+1}$, we obtain
$R_{ij}=T^{j}\cdot R_{i0}\equiv T^{j}\cdot r_{i,-i}=r_{j+i,j-i}$.
Therefore
\begin{align*}
	\operatorname{span}\{R_{ij}\;|\;i,j\in\ZZ_{n}\}
	&\;=\;
	\operatorname{span}\{r_{j+i,j-i}\;|\;i,j\in\ZZ_{n}\}\\
	&\;=\;
	\operatorname{span}\{r_{\alpha,\beta}\;|\;\alpha,\beta\in\ZZ_{n},\,\alpha+\beta\in 2\ZZ_{n}\}.
\end{align*}
Similarly,
\begin{align*}
r_{i,1-i} & \; = \; \sum_{r \in \ZZ_n} 
\frac{\theta_{1-2i+(k-1)r}(0)}{\theta_{1-2i-r}(-\tau)\theta_{kr}(\tau)}  \, x_{1-i-r}x_{i+r}
\\
&\; = \; \sum_{r \in \ZZ_n} \frac{\theta_{1-2i+(k-1)r}(0)}{-e\left(-n\tau- \frac{-1+2i+r}{n} \right)\theta_{-1+2i+r}(\tau)\theta_{kr}(\tau)} \,
x_{1-i-r}x_{i+r}
\\
          &\; \equiv \;  \sum_{r \in \ZZ_n}   e\left(\tfrac{r}{n}\right) 
          \frac{\theta_{1-2i+(k-1)r}(0)}{\theta_{-1+2i+r}(\tau)\theta_{kr}(\tau)} \, x_{1-i-r}x_{i+r}
          \\
          &\;=\; \sum_{r'\in \ZZ_n}
           e\left(\tfrac{r'-i+1}{n}\right)
           \frac{\theta_{k-(k+1)i+(k-1)r'}(0)}{\theta_{r'+i}(\tau)\theta_{k(r'-i+1)}(\tau)} \, x_{-r'}x_{r'+1}\qquad\text{(after setting $r'=i+r-1$)}
          \\
          &\;\equiv\; R'_{i0}
\end{align*}
which implies that $R'_{ij}=T^{j}\cdot R'_{i0}\equiv T^{j}\cdot r_{i,1-i}=r_{j+i,j-i+1}$ and
\begin{align*}
	\operatorname{span}\{R'_{ij}\;|\;i,j\in\ZZ_{n}\}
	&\;=\;
	\operatorname{span}\{r_{j+i,j-i+1}\;|\;i,j\in\ZZ_{n}\}\\
	&\;=\;
	\operatorname{span}\{r_{\alpha,\beta}\;|\;\alpha,\beta\in\ZZ_{n},\,\alpha+\beta+1\in 2\ZZ_{n}\}.
\end{align*}

If $n$ is odd, then $2\ZZ_{n}=\ZZ_{n}$ so $\operatorname{span}\{R_{ij}\}=\operatorname{span}\{R'_{ij}\}=\rel_{n,k}(E,\tau)$. 
If $n$ is even, then $\operatorname{span}\{R_{ij},R'_{ij}\}=\rel_{n,k}(E,\tau)$. Hence \cref{item.prop.new.rel.odd} and \cref{item.prop.new.rel.even} hold.

\cref{item.prop.new.rel.dupe}
Assume $n$ is even.
The relation $R_{ij}$ is a linear combination of terms of the form $x_{j-r}x_{j+r}$, $r \in \ZZ_n$, and 
$R_{i+\frac{n}{2},j+\frac{n}{2}}$ is a linear combination of terms of the form $x_{j+\frac{n}{2}-r'}x_{j+\frac{n}{2}+r'}$, $r' \in \ZZ_n$. 
Now $x_{j-r}x_{j+r}=x_{j+\frac{n}{2}-r'}x_{j+\frac{n}{2}+r'}$ if and only if $r'=r+\frac{n}{2}$. 
Let $r'=r+\frac{n}{2}$.
The coefficient of $x_{j-r}x_{j+r}$ in $R_{i+\frac{n}{2},j+\frac{n}{2}}$ is 
\begin{equation*}
e\left(\tfrac{r'}{n}\right)  
\frac{\theta_{-(k+1)(i+\frac{n}{2})+(k-1)r'}(0)}{\theta_{r'+i+\frac{n}{2}}(\tau)\theta_{k(r'-i-\frac{n}{2})}(\tau)}
 \;=\; 
- \,  e\left(\tfrac{r}{n}\right)  \frac{\theta_{-(k+1)i+(k-1)r}(0)}{\theta_{r+i}(\tau)\theta_{k(r-i)}(\tau)}
\end{equation*}
which is equal to the coefficient of $x_{j-r}x_{j+r}$ in $-R_{ij}$. Thus $R_{i+\frac{n}{2},j+\frac{n}{2}} = - R_{ij}$ as claimed. 
A similar argument shows that $R'_{i+\frac{n}{2},j+\frac{n}{2}} = - R'_{ij}$.
\end{proof}

\section{Twisting $Q_{n,k}(E,\tau)$}
\label{sec.twist.FO.alg}

\subsection{Twists}
\label{subsec.twists}

Given a degree-preserving automorphism $\phi\colon A\to A$ of a $\ZZ$-graded algebra over a field $\Bbbk$, the the \textsf{twist}, 
$A^{\phi}$, is the  graded vector space $A$ endowed with the associative multiplication 
\begin{equation*}
	a*b\;=\; \phi^m(a)b
\end{equation*}
when $b\in A_m$.
There is an equivalence $\Gr(A)\equiv \Gr(A^\phi)$ between their categories of graded left modules
\cite[Cor.~8.5]{ATV2}.  

Suppose $A=TV/\fa$ is the tensor algebra of a vector space $V$ modulo a graded ideal $\fa$ in $TV$. The restriction of $\phi$ to $V$ extends to a degree-preserving automorphism of $TV$ that we also denote by $\phi$. Since  $\phi$ descends to $A$, $\phi(\fa)=\fa$.

The next result gives a presentation of  $A^{\phi}$.

\begin{lemma}
Let ${\phi'}\colon TV\to TV$ be the linear map 
$\id_{V}\otimes\phi\otimes\cdots\otimes\phi^{m-1}$ on each $V^{\otimes m}$. 
The identity map $I:V\to V$ extends to a graded algebra isomorphism
\begin{equation*}
	\frac{TV}{\phi'(\fa)} \; \longrightarrow  \; \left(\frac{TV}{\fa}\right)^{\!\phi}.
\end{equation*} 
\end{lemma}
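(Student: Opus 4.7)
The plan is to construct the isomorphism from left to right by using the universal property of the tensor algebra. Since the twist $A^{\phi}$ is an associative graded algebra, the linear map $V \to A^{\phi}$, $v \mapsto v$, extends uniquely to a graded algebra homomorphism $\tau\colon TV \to A^{\phi}$. A direct induction on $m$ shows
\[
\tau(v_{1}\otimes\cdots\otimes v_{m}) \;=\; v_{1}*v_{2}*\cdots*v_{m} \;=\; \phi^{m-1}(v_{1})\phi^{m-2}(v_{2})\cdots\phi(v_{m-1})v_{m},
\]
where the last product is taken in $A$. The key step is to verify that $\tau$ kills $\phi'(\fa)$: if $r = \sum v_{1}^{(i)} \otimes \cdots \otimes v_{m}^{(i)}$ lies in $\fa\cap V^{\otimes m}$, then applying the formula for $\tau$ to $\phi'(r)$ and using the fact that $\phi$ is an algebra automorphism yields
\[
\tau(\phi'(r)) \;=\; \sum \phi^{m-1}(v_{1}^{(i)})\phi^{m-1}(v_{2}^{(i)})\cdots\phi^{m-1}(v_{m}^{(i)}) \;=\; \phi^{m-1}(r)
\]
in $A$. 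Since $\phi$ descends to $A$, $\phi(\fa)=\fa$, so $\phi^{m-1}(r) \in \fa$ and hence $\tau(\phi'(r))=0$ in $A^{\phi}$.

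Thus $\tau$ factors through a graded algebra homomorphism $\bar{\tau}\colon TV/\phi'(\fa) \to A^{\phi}$ that restricts to the identity on $V$. The map $\bar{\tau}$ is surjective because $V$ generates $A$ as an algebra and, consequently, generates $A^{\phi}$ as an algebra as well (the degree-$m$ component of $A^{\phi}$ is the image of $V^{\otimes m}$ under iterated $*$-multiplication, which coincides with the image of $V^{\otimes m}$ under iterated ordinary multiplication because the two products differ by applying powers of $\phi$ to each tensorand).

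To finish, I would compare Hilbert series. Since $\phi'$ is a bijective linear map on each $V^{\otimes m}$, it restricts to a linear isomorphism $\fa\cap V^{\otimes m} \to \phi'(\fa)\cap V^{\otimes m}$, so
\[
\dim (TV/\phi'(\fa))_{m} \;=\; \dim V^{\otimes m} - \dim \fa_{m} \;=\; \dim A_{m} \;=\; \dim (A^{\phi})_{m}.
\]
A surjective graded linear map between graded vector spaces with the same (finite-dimensional) graded components is an isomorphism, so $\bar\tau$ is the desired isomorphism. The main (minor) obstacle is the bookkeeping in the calculation $\tau(\phi'(r)) = \phi^{m-1}(r)$, since the twisted product introduces a decreasing sequence of powers of $\phi$ while $\phi'$ introduces an increasing one, and the two precisely cancel to give a uniform $\phi^{m-1}$.
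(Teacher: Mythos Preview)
Your proof is correct and follows essentially the same route as the paper: both construct the surjective graded algebra homomorphism $\tau\colon TV\to A^{\phi}$ extending $\id_V$ and identify its kernel with $\phi'(\fa)$. The only difference is how the kernel is pinned down: the paper shows directly that $\tau(f)=0$ iff $(\phi')^{-1}(f)\in\fa$ (by applying $\phi^{-(m-1)}$), whereas you prove the inclusion $\phi'(\fa)\subseteq\ker\tau$ and then close the gap with a Hilbert-series comparison; this last step tacitly uses finite-dimensionality of $V$, which the paper's argument does not, but that is harmless here.
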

\begin{proof}
	Since $(TV/\fa)^{\phi}$ is generated by $V$ as a $\Bbbk$-algebra, the identity $V\to V$ extends to a graded algebra homomorphism $\rho\colon TV\to(TV/\fa)^{\phi}$. We show that $\ker(\rho)=\phi'(\fa)$.
	
	Let $f\in V^{\otimes m}$ and write $f=\sum_{\sfi}c_{\sfi}x_{i_{1}}\cdots x_{i_{m}}$ where $c_{\sfi}\in\Bbbk$ for each $\sfi=(i_{1},\ldots,i_{m})$. The image of $f$ by $\rho$ is
	\begin{equation*}
		g\;:=\;\sum_{\sfi}c_{\sfi}\cdot x_{i_{1}}*\cdots* x_{i_{m}}\;\in\;(TV/\fa)^{\phi}.
	\end{equation*}
	Thus $\rho(f)=0$ if and only if $g\in \fa$, that is, if and only if
	\begin{equation*}
		\sum_{\sfi}c_{\sfi}\phi^{m-1}(x_{i_{1}})\phi^{m-2}(x_{i_{2}})\cdots\phi(x_{i_{m-1}})x_{i_{m}}\;\in\; \fa.
	\end{equation*}
	Since $\fa$ is stable under $\phi$, this is equivalent to the statement that $\fa$ contains 
	\begin{align*}
		\phi^{-(m-1)}\left(\sum_{\sfi}c_{\sfi}\phi^{m-1}(x_{i_{1}})\phi^{m-2}(x_{i_{2}})\cdots\phi(x_{i_{m-1}})x_{i_{m}}\right)
		&\;=\;\sum_{\sfi}c_{\sfi}x_{i_{1}}\phi(x_{i_{2}})\cdots\phi^{m-2}(x_{i_{m-1}})\phi^{m-1}(x_{i_{m}})\\
		&\;=\;(I \otimes\phi\otimes\cdots\otimes\phi^{m-1})^{-1}(f).
	\end{align*}
	 Therefore $\ker(\rho)=\phi'(\fa)$.
\end{proof}

Consider, for example, a degree-preserving automorphism, $\phi$, of the polynomial ring $\CC[x_0,\ldots,x_{n-1}]$ with its standard grading. If $a$ and $b$ are homogeneous elements of degree 1, then
$$
a*\phi(b) \;=\; \phi(a)\phi(b) \;=\; \phi(b)\phi(a) \;=\; b*\phi(a)
$$
so 
$\CC[x_0,\ldots,x_{n-1}]^\phi$ is the free algebra $\CC \langle x_0,\ldots,x_{n-1} \rangle$ modulo the ideal generated by the elements
$x_i \otimes \phi(x_j) \,-\,  x_j \otimes \phi(x_i)$ for $0 \le i< j \le n-1$.

\subsection{The twists of $Q_{n,k}(E,\tau)$ induced from translations by $n$-torsion points}
\label{ssect.nk.twist.polyring}

In this subsection, we prove that for each $\zeta\in E[n]$, $Q_{n,k}(E,\tau+\zeta)$ is a twist of $Q_{n,k}(E,\tau)$ with respect to an
automorphism that is in the image of the map $H_{n} \to \Aut(Q_{n,k}(E,\tau))$ (see \cref{prop.qnk.act}).

For a degree-preserving automorphism $\phi\colon Q_{n,k}(E,\tau)\to Q_{n,k}(E,\tau)$, the automorphism
$1\otimes\phi\colon V\otimes V\to V\otimes V$
descends to an automorphism $1 \otimes \phi\colon\PP(V\otimes V)\to\PP(V\otimes V)$.

\begin{lemma}\label{lem.tw.L}
For all $\tau \in \CC$, define $L_{ij}(\tau)$ and $\rel_{n,k}(E,\tau)$ as in \cref{def.Qnk}.
	\begin{enumerate}
		\item\label{item.lem.tw.L.S} 
		$L_{ij}\big(\tau+\frac{1}{n}\big)=(1 \otimes S^{-k-1}) (L_{ij}(\tau))$ and
		$Q_{n,k}(E,\tau+\tfrac{1}{n}) \, =\, Q_{n,k}(E,\tau)^{S^{-k-1}}$.
		\item\label{item.lem.tw.L.T} $L_{ij}\big(\tau+\frac{1}{n}\eta\big)=(1 \otimes T^{-k'-1}) (L_{i+1,j+k'}(\tau))$ and
		$Q_{n,k}(E,\tau+\tfrac{1}{n}\eta) \, =\, Q_{n,k}(E,\tau)^{T^{-k'-1}}$.
	\end{enumerate}
\end{lemma}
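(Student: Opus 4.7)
The heart of the argument is a direct calculation using the quasi-periodicity properties in \cref{prop.official.theta.basis} to show that the relations $r_{ij}(\tau+\tfrac{1}{n})$ and $r_{ij}(\tau+\tfrac{1}{n}\eta)$ differ from $(1\otimes S^{-k-1})r_{ij}(\tau)$ and $(1\otimes T^{-k'-1})r_{i+1,j+k'}(\tau)$, respectively, by non-zero scalars. For \cref{item.lem.tw.L.S}, I would start with $\tau \notin \frac{1}{n}\Lambda$ and substitute $\tau\mapsto\tau+\tfrac{1}{n}$ in \cref{the-relns}; using $\theta_\alpha(z+\tfrac{1}{n})=e(\tfrac{\alpha}{n})\theta_\alpha(z)$ for both factors in the denominator yields
\begin{equation*}
  r_{ij}\!\left(\tau+\tfrac{1}{n}\right)\;=\;e\!\left(\tfrac{j-i}{n}\right)\sum_{r\in\ZZ_n}e\!\left(-\tfrac{(k+1)r}{n}\right)\frac{\theta_{j-i+(k-1)r}(0)}{\theta_{j-i-r}(-\tau)\theta_{kr}(\tau)}\,x_{j-r}x_{i+r}.
\end{equation*}
Since $S^{-k-1}x_{i+r}=e\!\left(-\tfrac{(k+1)(i+r)}{n}\right)x_{i+r}$, the per-summand factor $e(-(k+1)r/n)$ is exactly what $1\otimes S^{-k-1}$ contributes, and the prefactor $e(\tfrac{j+ki}{n})$ is independent of $r$. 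Hence $r_{ij}(\tau+\tfrac{1}{n})$ is a non-zero scalar multiple of $(1\otimes S^{-k-1})r_{ij}(\tau)$, proving the equality of lines.

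For \cref{item.lem.tw.L.T}, I would apply \cref{prop.official.theta.basis}\cref{item.official.theta.basis.act.eta} (and its inverse, obtained from \cref{eq.torsion.translate} with $r=-1$) to compute
\begin{align*}
  \theta_{j-i-r}(-\tau-\tfrac{1}{n}\eta)&\;=\;e\!\left(-\tau+\tfrac{1}{2n}-\tfrac{n+1}{2n}\eta\right)\theta_{j-i-r-1}(-\tau),\\
  \theta_{kr}(\tau+\tfrac{1}{n}\eta)&\;=\;e\!\left(-\tau-\tfrac{1}{2n}+\tfrac{n-1}{2n}\eta\right)\theta_{kr+1}(\tau),
\end{align*}
whose reciprocals combine into a prefactor $e(2\tau+\tfrac{1}{n}\eta)$ that does not depend on $r$. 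The key step is then the reindexing $s=r+k'$; using $kk'\equiv 1\pmod n$ this sends $kr+1\mapsto ks$ and $(k-1)r\mapsto (k-1)s-(1-k')=(k-1)s+k'-1$, so after collecting terms the result takes the form
\begin{equation*}
  r_{ij}\!\left(\tau+\tfrac{1}{n}\eta\right)\;=\;e\!\left(2\tau+\tfrac{1}{n}\eta\right)\sum_{s\in\ZZ_n}\frac{\theta_{(j+k'-i-1)+(k-1)s}(0)}{\theta_{(j+k'-i-1)-s}(-\tau)\theta_{ks}(\tau)}\,x_{j+k'-s}\,x_{i+s-k'}.
\end{equation*}
The theta factors are precisely those appearing in $r_{i+1,j+k'}(\tau)$, while the monomials differ only in the second factor by $T^{-k'-1}x_{i+1+s}=x_{i+s-k'}$. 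Hence $r_{ij}(\tau+\tfrac{1}{n}\eta)$ is a non-zero scalar multiple of $(1\otimes T^{-k'-1})r_{i+1,j+k'}(\tau)$.

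Both equalities of lines in $\PP(V\otimes V)$ have been established on $E-E[n]$; by \cref{prop.ext.mor.rel} each side is the restriction of a morphism $E\to\PP(V\otimes V)$, and two morphisms from an integral variety that agree on a dense open set are equal, so the equalities hold on all of $E$. Taking linear spans then gives
\begin{equation*}
  \rel_{n,k}\!\left(E,\tau+\tfrac{1}{n}\right)=(1\otimes S^{-k-1})\rel_{n,k}(E,\tau),\qquad\rel_{n,k}\!\left(E,\tau+\tfrac{1}{n}\eta\right)=(1\otimes T^{-k'-1})\rel_{n,k}(E,\tau),
\end{equation*}
where in the second equality we use that $(i,j)\mapsto (i+1,j+k')$ is a bijection of $\ZZ_n^{2}$. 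Combining with the description of the relations of a twist in \cref{subsec.twists} (namely, $\phi'$ restricts to $1\otimes\phi$ in degree two) yields the asserted algebra identities $Q_{n,k}(E,\tau+\tfrac{1}{n})=Q_{n,k}(E,\tau)^{S^{-k-1}}$ and $Q_{n,k}(E,\tau+\tfrac{1}{n}\eta)=Q_{n,k}(E,\tau)^{T^{-k'-1}}$. The main obstacle is the bookkeeping in \cref{item.lem.tw.L.T}: one must track the effect of the shift on both the arguments of $\theta$ and the subscripts on the $x_\alpha$'s, and choose the reindexing $s=r+k'$ (as opposed to, say, $s=kr+1$) so that both the theta coefficients and the monomials simultaneously match $(1\otimes T^{-k'-1})r_{i+1,j+k'}(\tau)$.
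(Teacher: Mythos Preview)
Your proof is correct and follows essentially the same approach as the paper: a direct computation using the quasi-periodicity identities of the $\theta_\alpha$'s, followed by the reindexing $s=r+k'$ (the paper writes $r'=r+k'$ and phrases it as solving a small linear system in $\ZZ_n$, but it is the same substitution), and finally the extension from $E-E[n]$ to all of $E$ by uniqueness of the morphism to $\PP(V\otimes V)$. The scalar prefactors you obtain, $e(\tfrac{j+ki}{n})$ and $e(2\tau+\tfrac{1}{n}\eta)$, match those in the paper.
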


\begin{proof}
	First we assume $\tau\in\bbC-\frac{1}{n}\L$. In this case,  $L_{ij}(\tau)$ is spanned by $r_{ij}(\tau)$ 
	unless $r_{ij}$ is identically zero.
	
	Since
	\begin{align*}
		r_{ij}(\tau+\tfrac{1}{n})
		& \; = \; \sum_{r\in\ZZ_n}
		\frac{\theta_{j-i+(k-1)r}(0)}{e(-\frac{j-i-r}{n})\theta_{j-i-r}(-\tau)e(\frac{kr}{n})\theta_{kr}(\tau)} \, x_{j-r}\otimes x_{i+r}\\
		& \; = \; \sum_{r\in\ZZ_n}
		e\Big(\tfrac{j-i-(k+1)r}{n}\Big)  \frac{\theta_{j-i+(k-1)r}(0)}{\theta_{j-i-r}(-\tau)\theta_{kr}(\tau)} \, x_{j-r}\otimes x_{i+r}\\
		& \; = \;  e\left(\tfrac{ki+j}{n}\right)
		\sum_{r\in\ZZ_n} e\Big(-\tfrac{(k+1)(i+r)}{n}\Big)\frac{\theta_{j-i+(k-1)r}(0)}{\theta_{j-i-r}(-\tau)\theta_{kr}(\tau)}
		\, x_{j-r}\otimes x_{i+r}\\
		& \; = \;  e(\tfrac{ki+j}{n}) (1 \otimes S^{-k-1}) \left( \sum_{r\in\ZZ_n}\frac{\theta_{j-i+(k-1)r}(0)}{\theta_{j-i-r}(-\tau)\theta_{kr}(\tau)}
		\, x_{j-r} \otimes x_{i+r}\right)\\
		& \; = \;  e(\tfrac{ki+j}{n})(1 \otimes S^{-k-1}) \left( r_{ij}(\tau) \right),
	\end{align*}
	statement \cref{item.lem.tw.L.S} holds for all $\tau \in \bbC-\frac{1}{n}\L$.
	The first step towards proving \cref{item.lem.tw.L.T} is the  calculation
	\begin{align*}
		r_{ij}(\tau+\tfrac{1}{n}\eta)
		& \; =\; \sum_{r\in\ZZ_n}\frac{\theta_{j-i+(k-1)r}(0)}{e(-\tau-\frac{1}{n}\eta+\frac{1}{2n}-\frac{n-1}{2n}\eta)\theta_{j-i-r-1}(-\tau)e(-\tau-\frac{1}{2n}+\frac{n-1}{2n}\eta)\theta_{kr+1}(\tau)} \, x_{j-r}  \otimes x_{i+r}\\
		&\; =\;  e(2\tau+\tfrac{1}{n}\eta)\sum_{r\in\ZZ_n}\frac{\theta_{j-i+(k-1)r}(0)}{\theta_{j-i-r-1}(-\tau)\theta_{kr+1}(\tau)}
		\, x_{j-r}\otimes	x_{i+r}.
	\end{align*}
	Given $(i,j,r)$, there is a unique solution $(i',j',r')$ to the system of equations
	\begin{equation*}
		\begin{cases}
			j-i-r-1=j'-i'-r',\\
			kr+1=kr',\\
			j-r=j'-r',
		\end{cases}
	\end{equation*}
	namely $(i',j',r')=(i+1,j+k',r+k')$. Hence
	\begin{align*}
		\frac{\theta_{j-i+(k-1)r}(0)}{\theta_{j-i-r-1}(-\tau)\theta_{kr+1}(\tau)}\, x_{j-r} \otimes x_{i+r}
		\;=\; \frac{\theta_{j'-i'+(k-1)r'}(0)}{\theta_{j'-i'-r'}(-\tau)\theta_{kr'}(\tau)}\, x_{j'-r'} \otimes x_{i'+r'-k'-1}.
	\end{align*}
	Therefore
	\begin{align*}
		r_{ij}(\tau+\tfrac{1}{n}\eta)\; = \;e\big(2\tau+\tfrac{1}{n}\eta\big) ( 1 \otimes T^{-k'-1}) r_{i+1,j+k'}(\tau).
	\end{align*}
	Hence \cref{item.lem.tw.L.T} holds for all $\tau \in \bbC-\frac{1}{n}\L$.
	
	The argument in the proof of \cref{prop.anti.isom} then shows that \cref{item.lem.tw.L.S} 
	and  \cref{item.lem.tw.L.T} hold for all $\tau\in\CC$.
\end{proof}

Let $\psi\colon H_{n}\to\frac{1}{n}\Lambda$ be the group homomorphism defined  by
\begin{equation}
\label{eq:homom.psi:H_n.to.Aut}
	\psi(S)\;:=\; -\tfrac{1}{n},\qquad\psi(T)\; := \; -  \tfrac{k}{n}\eta,\qquad\psi(\epsilon) \; :=\; 0.
\end{equation}
It induces an isomorphism $H_{n}/\epsilon H_{n} \to E[n]=\frac{1}{n}\Lambda/\Lambda$.

\begin{theorem}\label{thm.qnk.tw}
	Assume $\tau\in E$. For all $\sigma\in H_{n}$,
	\begin{equation*}
		Q_{n,k}(E,\tau+\psi(\sigma))\; =\; Q_{n,k}(E,\tau)^{\sigma^{k+1}}.
	\end{equation*}
	If $a,b\in\bbZ$, then $Q_{n,k}(E,\tau+\tfrac{a}{n}+\frac{b}{n}\eta)$ is the twist of $Q_{n,k}(E,\tau)$ by the automorphism 
	\begin{equation*}
		T^{-(k'+1)b}S^{-(k+1)a}  : x_{i} \, \mapsto \, e(-\tfrac{(k+1)ai}{n})x_{i-(k'+1)b}.
	\end{equation*}
\end{theorem}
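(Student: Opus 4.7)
The plan is to reduce the claim, via multiplicativity, to the case of the generators $S,T,\epsilon$ of $H_{n}$, which is essentially \cref{lem.tw.L}. Before that I would record two elementary properties of the twist construction: (i) if $\phi_{1},\phi_{2}$ are commuting degree-preserving automorphisms of a graded algebra $A$, then $(A^{\phi_{1}})^{\phi_{2}}=A^{\phi_{1}\phi_{2}}$, immediately from the defining formula $a*b=\phi^{m}(a)b$; and (ii) if $\phi$ acts on every element of $V$ by the same scalar $c$, then $\phi'=\id\otimes\phi\otimes\cdots\otimes\phi^{m-1}$ is multiplication by $c^{\binom{m}{2}}$ on $V^{\otimes m}$, so $\phi'$ fixes the (homogeneous) relation ideal and $A^{\phi}=A$ as graded algebras. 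In particular, twisting $Q_{n,k}(E,\tau)$ by any power of $\epsilon$ leaves it unchanged.

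Next I would verify the formula for $\sigma=S,T,\epsilon$. Substituting $\tau\mapsto\tau-\tfrac{1}{n}$ in \cref{lem.tw.L}\cref{item.lem.tw.L.S} and twisting both sides by $S^{k+1}$ via (i) yields
\[
Q_{n,k}(E,\tau+\psi(S))\;=\;Q_{n,k}(E,\tau-\tfrac{1}{n})\;=\;Q_{n,k}(E,\tau)^{S^{k+1}}.
\]
Iterating \cref{lem.tw.L}\cref{item.lem.tw.L.T} $k$ times gives $Q_{n,k}(E,\tau+\tfrac{k}{n}\eta)=Q_{n,k}(E,\tau)^{T^{-k(k'+1)}}$; since $kk'\equiv 1\pmod n$ and $T^{n}=1$, the exponent is $-(k+1)$, and the same shift-and-twist trick produces $Q_{n,k}(E,\tau+\psi(T))=Q_{n,k}(E,\tau)^{T^{k+1}}$. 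For $\epsilon$ there is nothing to do: $\psi(\epsilon)=0$ and $\epsilon^{k+1}$ acts as a scalar on $V$, so (ii) applies.

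For the multiplicativity step, assume the formula holds (for every $\tau$) for $\sigma_{1}$ and $\sigma_{2}$. Because $\psi$ is a homomorphism into the abelian group $E$,
\[
Q_{n,k}(E,\tau+\psi(\sigma_{1}\sigma_{2}))\;=\;Q_{n,k}(E,\tau+\psi(\sigma_{1}))^{\sigma_{2}^{k+1}}\;=\;\bigl(Q_{n,k}(E,\tau)^{\sigma_{1}^{k+1}}\bigr)^{\sigma_{2}^{k+1}}\;=\;Q_{n,k}(E,\tau)^{\sigma_{1}^{k+1}\sigma_{2}^{k+1}}.
\]
The quotient $H_{n}/\langle\epsilon\rangle$ is abelian, so $\sigma_{1}^{k+1}\sigma_{2}^{k+1}(\sigma_{1}\sigma_{2})^{-(k+1)}\in\langle\epsilon\rangle$; by (ii) this power-of-$\epsilon$ discrepancy is invisible to the twist, giving $Q_{n,k}(E,\tau)^{\sigma_{1}^{k+1}\sigma_{2}^{k+1}}=Q_{n,k}(E,\tau)^{(\sigma_{1}\sigma_{2})^{k+1}}$. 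Since $S,T,\epsilon$ generate $H_{n}$, the first assertion follows for all $\sigma\in H_{n}$.

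For the explicit $(a,b)$ statement I would take $\sigma:=S^{-a}T^{-k'b}$, for which $\psi(\sigma)=\tfrac{a}{n}+\tfrac{kk'b}{n}\eta=\tfrac{a}{n}+\tfrac{b}{n}\eta$. Working modulo $\langle\epsilon\rangle$ and using $kk'\equiv 1\pmod n$ gives $\sigma^{k+1}\equiv T^{-(k'+1)b}S^{-(k+1)a}$, and by (ii) the $\epsilon$-ambiguity disappears. The explicit action $x_{i}\mapsto e(-\tfrac{(k+1)ai}{n})\,x_{i-(k'+1)b}$ then follows directly by composing the operators in \cref{Hberg.action.on.V}. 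The only real obstacle is the bookkeeping with $\epsilon$: once (ii) is in place, the non-commutativity of $S$ and $T$ in $H_{n}$ is harmless.
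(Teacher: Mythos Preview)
Your proof is correct and rests on the same two ingredients as the paper's: \cref{lem.tw.L} and the observation that twisting by a power of $\epsilon$ is trivial. The difference is purely organizational. The paper writes an arbitrary $\sigma=T^{b}S^{a}$ in $H_{n}/\langle\epsilon\rangle$ and applies \cref{lem.tw.L} directly, iterated $a$ times in the $S$-direction and $bk$ times in the $T$-direction, to obtain $Q_{n,k}(E,\tau+\psi(\sigma))=Q_{n,k}(E,\tau)^{T^{bk(k'+1)}S^{a(k+1)}}$; since $bk(k'+1)\equiv b(k+1)\pmod n$, the exponent is $(T^{b}S^{a})^{k+1}$ modulo $\langle\epsilon\rangle$, and the paper dispatches the $\epsilon$-ambiguity with the same remark you make. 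You instead verify the formula on the generators $S,T,\epsilon$ and then extend by multiplicativity, which is a bit longer but makes the composition-of-twists step explicit.

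One small point to tighten: your property (i) is stated only for \emph{commuting} $\phi_{1},\phi_{2}$, yet in the induction step $\sigma_{1}^{k+1}$ and $\sigma_{2}^{k+1}$ need not commute in $H_{n}$. For quadratic $A=TV/(R)$ the relation space of $(A^{\phi_{1}})^{\phi_{2}}$ is $(1\otimes\phi_{2})(1\otimes\phi_{1})(R)=(1\otimes\phi_{2}\phi_{1})(R)$, so in general $(A^{\phi_{1}})^{\phi_{2}}=A^{\phi_{2}\phi_{1}}$; the order discrepancy is then absorbed by exactly the $\epsilon$-invariance argument you give afterward. The paper handles this identically via its parenthetical ``we can use either $T^{b}S^{a}$ or $S^{a}T^{b}$ because the twist by $\epsilon$ does not change the algebra.''
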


\begin{proof}
	Let $\sigma=T^{b}S^{a}$ in $H_{n}/\epsilon H_{n}$. By \cref{lem.tw.L},
	\begin{align*}
		Q_{n,k}(E,\tau+\psi(\sigma))
		& \; = \; Q_{n,k}(E,\tau-\tfrac{a}{n}-\tfrac{bk}{n}\eta)
		\\
		& \;=\; Q_{n,k}(E,\tau)^{T^{-bk(-k'-1)}S^{-a(-k-1)}}
		\\
		& \; = \; Q_{n,k}(E,\tau)^{(T^{b}S^{a})^{k+1}}
		\\
		& \; = \; Q_{n,k}(E,\tau)^{\sigma^{k+1}}.
	\end{align*}
	(Here we can use either $T^{b}S^{a}$ or $S^{a}T^{b}$ because the twist by $\epsilon$ does not change the algebra.)
	The second statement in the proposition is obtained from the first with $\sigma=T^{-bk'}S^{-a}$.
\end{proof}

\subsubsection{More isomorphisms}
\label{ssect.more.isoms}
Note that $k+1$ is a unit in $\ZZ_n$ if and only if $k'+1$ is since $k'+1=k'(k+1)$.

Assume $k+1$ is not a unit in $\ZZ_n$. It follows from the second sentence in \cref{thm.qnk.tw} that 
if $a,b \in \ZZ$ are such that $(k+1)a=(k'+1)b=0$ in $\ZZ_n$, then
\begin{equation*}
Q_{n,k}(E,\tau+\tfrac{a}{n}+\tfrac{b}{n}\eta)\;=\;Q_{n,k}(E,\tau).
\end{equation*}
In \cref{prop.qnk.poly} we will show that $Q_{n,k}(E,0)$ is a polynomial ring on $n$ variables for all $(n,k)$.  Thus, if $a,b \in \ZZ$ are such that $(k+1)a=(k'+1)b=0$ in $\ZZ_n$, then $Q_{n,k}(E,\frac{a}{n}+\frac{b}{n}\eta)$ is a polynomial ring on $n$ variables.  For example, $Q_{35,4}(E,\frac{1}{5} + \frac{2}{5}\eta)$ and $Q_{35,6}(E,\frac{3}{7} + \frac{1}{7}\eta)$ are polynomial rings on 35 variables.

We will see in \cref{prop.n.minus.one} that $Q_{n,n-1}(E,\tau)=\bbC[x_{0},\ldots,x_{n-1}]$ for all $\tau$. In that case $k+1=0$ in $\ZZ_n$ so adding an $n$-torsion point to $\tau$ does not change the relations.  However, twisting $\bbC[x_{0},\ldots,x_{n-1}]$ by $S$ (or $T$) \emph{does} change the relations.

\section{$Q_{n,k}(E,\tau)$ for some special $k$'s and $\tau$'s}
\label{sec.FO.alg.tors.pts}

In this section, we use the definition of $\rel_{n,k}(E,\tau)$ as the linear span of the lines $L_{ij}(\tau) \subseteq V^{\otimes 2}$.

In \cref{prop.qnk.poly} we prove the assertion in \cite[\S1.2, Rmk.~1]{FO89} and  \cite[\S 3]{Od-survey}
 that $Q_{n,k}(E,0)$ is a polynomial ring on $n$ variables. 
 It follows from this  and \cref{thm.qnk.tw} that $Q_{n,k}(E,\tau)$ is a twist of that polynomial ring when $\tau\in E[n]$.
In particular, $\dim \rel_{n,k}(E,\tau) = \binom{n}{2}$ when $\tau \in E[n]$.

\subsection{$Q_{n,k}(E,0)$ is a polynomial ring}

\begin{proposition}\label{prop.qnk.poly}\leavevmode
	\begin{enumerate}
		\item\label{item.prop.qnk.poly.ij} If $i\neq j$, then $L_{ij}(0)=\CC.[x_{i},x_{j}]$.
		\item\label{item.prop.qnk.poly.ii} If $r_{ii}(\tau)$ is not identically zero on $E-E[n]$, then
		\begin{equation*}
			L_{ii}(0) \; =\; \CC.\sum_{r=1}^{\lceil\frac{n}{2}\rceil-1}\frac{\theta_{(k-1)r}(0)}{\theta_{-r}(0)\theta_{kr}(0)}[x_{i-r},x_{i+r}].
		\end{equation*}
		\item\label{item.prop.qnk.poly.zero} $Q_{n,k}(E,0)=\CC[x_{0},\ldots,x_{n-1}]$.
	\end{enumerate}
\end{proposition}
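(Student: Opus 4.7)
The plan is to evaluate each coefficient of $r_{ij}(\tau)$ as $\tau\to 0$, using that $\theta_{0}$ has a simple zero at the origin, that $\theta_{\alpha}(0)\neq 0$ for $\alpha\in\ZZ_{n}-\{0\}$ (which holds since $\gcd(n,k)=1$, by \cref{prop.official.theta.basis}\cref{item.official.theta.basis.zeros}), and the symmetry $\theta_{\alpha}(0)=-e(\alpha/n)\theta_{-\alpha}(0)$ coming from \cref{prop.official.theta.basis}\cref{item.official.theta.basis.minus}. By \cref{prop.ext.mor.rel}, $L_{ij}(0)$ is computed as the limit line $\lim_{\tau\to 0}\CC.r_{ij}(\tau)$ in $\PP(V^{\otimes 2})$, taken after any convenient scalar renormalization of $r_{ij}(\tau)$.

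For \cref{item.prop.qnk.poly.ij}, assume $i\neq j$. Only two summands in \cref{the-relns} have poles at $\tau=0$: the $r=0$ summand, with denominator $\theta_{j-i}(-\tau)\theta_{0}(\tau)$, and the $r=j-i$ summand, with denominator $\theta_{0}(-\tau)\theta_{k(j-i)}(\tau)$. Since $j-i\neq 0$ in $\ZZ_{n}$, the numerators $\theta_{j-i}(0)$ and $\theta_{k(j-i)}(0)$ are nonzero. Using $\theta_{0}(\tau)=\theta_{0}'(0)\tau+O(\tau^{2})$ and $\theta_{0}(-\tau)=-\theta_{0}'(0)\tau+O(\tau^{2})$, a short calculation (cancelling $\theta_{j-i}(0)$ and $\theta_{k(j-i)}(0)$ between numerator and denominator) shows
\[\theta_{0}(\tau)\,r_{ij}(\tau)\;\xrightarrow[\tau\to 0]{}\;x_{j}x_{i}-x_{i}x_{j},\]
so $L_{ij}(0)=\CC.[x_{i},x_{j}]$.

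For \cref{item.prop.qnk.poly.ii}, assume $i=j$. The $r=0$ coefficient $\theta_{0}(0)/[\theta_{0}(-\tau)\theta_{0}(\tau)]$ is identically zero as a meromorphic function of $\tau$ because its numerator vanishes. For each $r\neq 0$ in $\ZZ_{n}$, coprimality of $n$ and $k$ gives $\theta_{-r}(0)\theta_{kr}(0)\neq 0$, so the coefficient $c_{r}(\tau):=\theta_{(k-1)r}(0)/[\theta_{-r}(-\tau)\theta_{kr}(\tau)]$ is regular at $\tau=0$. Pair each $r$ with $-r$: applying the symmetry $\theta_{-\alpha}(0)=-e(-\alpha/n)\theta_{\alpha}(0)$ to each of the three theta factors of $c_{-r}(0)$ yields $c_{-r}(0)=-c_{r}(0)$; in particular, if $n$ is even then $c_{n/2}(0)=0$. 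The $r$ and $-r$ contributions to $r_{ii}(0)$ combine into $c_{r}(0)(x_{i-r}x_{i+r}-x_{i+r}x_{i-r})$, and summing over $r=1,\ldots,\lceil n/2\rceil-1$ gives the stated formula; under the hypothesis that $r_{ii}$ is not identically zero on $E-E[n]$, this limit is nonzero and spans $L_{ii}(0)$.

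For \cref{item.prop.qnk.poly.zero}, \cref{item.prop.qnk.poly.ij} shows that $\rel_{n,k}(E,0)$ contains every commutator $[x_{i},x_{j}]$ with $i\neq j$, and these span the $\binom{n}{2}$-dimensional subspace $\Alt^{2}V\subseteq V\otimes V$. By \cref{item.prop.qnk.poly.ii}, each $L_{ii}(0)$ that is defined already lies in $\Alt^{2}V$, so $\rel_{n,k}(E,0)=\Alt^{2}V$ and therefore $Q_{n,k}(E,0)=\CC[x_{0},\ldots,x_{n-1}]$. The main obstacle is the theta-function bookkeeping in \cref{item.prop.qnk.poly.ii}: correctly tracking the roots of unity $e(\cdot/n)$ arising from the transformation rule for $\theta_{\alpha}(-z)$, verifying the sign identity $c_{-r}(0)=-c_{r}(0)$ without error, and handling the parity of $n$ in the summation range.
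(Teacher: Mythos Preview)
Your proof is correct and follows essentially the same approach as the paper's: for \cref{item.prop.qnk.poly.ij} you isolate the two singular summands $r=0$ and $r=j-i$ and renormalize by $\theta_0(\tau)$, and for \cref{item.prop.qnk.poly.ii} you observe the $r=0$ term vanishes, the remaining coefficients are regular at $\tau=0$, and you pair $r$ with $-r$ to obtain commutators. The only cosmetic differences are that the paper uses the functional equation $\theta_0(-\tau)=-e(-n\tau)\theta_0(\tau)$ where you use the Taylor expansion $\theta_0(\tau)=\theta_0'(0)\tau+O(\tau^2)$, and for even $n$ the paper kills the $r=n/2$ term by noting $(k-1)\tfrac{n}{2}\equiv 0$ so the numerator $\theta_0(0)$ vanishes, whereas you deduce $c_{n/2}(0)=0$ from the antisymmetry $c_{-r}(0)=-c_r(0)$.
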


Note that
\begin{equation*}
  \lceil\tfrac{n}{2}\rceil-1=
  \begin{cases}
    \frac{n-1}{2} & \text{if $n$ is odd,}\\
    \frac{n}{2}-1 & \text{if $n$ is even.}
  \end{cases}
\end{equation*}

\begin{proof}
  When taking limits in this proof, we give $E$, $V\otimes V$, and $\PP(V\otimes V)$ the analytic topologies.
  
  \cref{item.prop.qnk.poly.ij} Assume $i\neq j$. We first show that
  \begin{equation}\label{eq.rel.mult.theta}
    \lim_{\tau\to 0}\theta_{0}(\tau)r_{ij}(\tau) \; = \; -[x_{i},x_{j}]
  \end{equation}
  in $V\otimes V$.
  
  Let $\tau \in \CC-\frac{1}{n}\L$. If $\a \in \ZZ_n$, then $\theta_{\alpha}(0)=0$ if and only if $\alpha=0$. Among the terms
  \begin{equation*}
    \theta_{0}(\tau)\frac{\theta_{j-i+(k-1)r}(0)}{\theta_{j-i-r}(-\tau)\theta_{kr}(\tau)} \, x_{j-r}x_{i+r}
  \end{equation*}
  appearing in $\theta_{0}(\tau)r_{ij}(\tau)$, we only have to look at those with $r$ satisfying $\theta_{j-i-r}(0)=0$ or $\theta_{kr}(0)=0$, or equivalently, with $r=0$ or $r=j-i$, since all other terms approach zero as $\tau\to 0$. Therefore the left-hand side of \cref{eq.rel.mult.theta} is equal to
  \begin{align*}
    &\lim_{\tau\to 0}\theta_{0}(\tau)\bigg(\frac{\theta_{j-i}(0)}{\theta_{j-i}(-\tau)\theta_{0}(\tau)}x_{j}x_{i}+\frac{\theta_{k(j-i)}(0)}{\theta_{0}(-\tau)\theta_{k(j-i)}(\tau)}\, x_{i}x_{j}\bigg)\\
    &\;=\;\lim_{\tau\to 0}\bigg(\frac{\theta_{j-i}(0)}{\theta_{j-i}(-\tau)}x_{j}x_{i}+\frac{\theta_{0}(\tau)}{-e(-n\tau)\theta_{0}(\tau)}\cdot\frac{\theta_{k(j-i)}(0)}{\theta_{k(j-i)}(\tau)}\, x_{i}x_{j}\bigg)\\
    &\;=\;-[x_{i},x_{j}].
  \end{align*}
  Here we used $\theta_{\alpha}(-z)=-e\big(-nz+\frac{\alpha}{n}\big)\theta_{-\alpha}(z)$.
  
  Since $[x_{i},x_{j}]\neq 0$ in $V\otimes V$ and $\theta_{0}(\tau)\neq 0$ on a punctured open neighborhood of $0$, we can rephrase \cref{eq.rel.mult.theta} as $L_{ij}(\tau)\to\CC.[x_{i},x_{j}]$ in $\bbP(V\otimes V)$ as $\tau\to 0$ in $E$. On the other hand, the morphism ${L}_{ij}\colon E\to\bbP(V\otimes V)$ in \cref{prop.ext.mor.rel} is continuous with respect to the analytic topologies so $L_{ij}(\tau)\to L_{ij}(0)$ as $\tau\to 0$. The uniqueness of the limit implies the desired conclusion.
  
  \cref{item.prop.qnk.poly.ii} Assume $r_{ii}(\tau)$ is not identically zero. In a similar way to \cref{item.prop.qnk.poly.ij}, it suffices to prove
  \begin{equation*}
    \lim_{\tau\to 0}r_{ii}(\tau)\;=\;\sum_{r=1}^{\lceil\frac{n}{2}\rceil-1}\frac{\theta_{(k-1)r}(0)}{\theta_{-r}(0)\theta_{kr}(0)}\, [x_{i-r},x_{i+r}]
  \end{equation*}
  in $V\otimes V$. By definition, 
  \begin{equation*}
    r_{ii}(\tau)\;=\;\sum_{r\in\bbZ_{n}}\frac{\theta_{(k-1)r}(0)}{\theta_{-r}(-\tau)\theta_{kr}(\tau)}\, x_{i-r}x_{i+r}.
  \end{equation*}
  Since $\theta_{0}(0)=0$, the $r=0$ summand in $r_{ii}(\tau)$ is zero on a punctured open neighborhood of $0$.  When $r\neq 0$, the limit as $\tau \to 0$ of that summand is obtained by substituting $\tau=0$.
  
  Assume $n$ is even. Since $k$ is coprime to $n$, $k$ is odd and $(k-1)\frac{n}{2}=0$ in $\ZZ_n$; the $r=\frac{n}{2}$ summand is therefore zero.
  
  Therefore, in general, $\lim_{\tau\to 0}r_{ii}(\tau)$ is equal to
  \begin{align*}
    &\sum_{r=1}^{\lceil\frac{n}{2}\rceil-1}\bigg(\frac{\theta_{(k-1)r}(0)}{\theta_{-r}(0)\theta_{kr}(0)}x_{i-r}x_{i+r}+\frac{\theta_{(k-1)(-r)}(0)}{\theta_{-(-r)}(0)\theta_{k(-r)}(0)}x_{i-(-r)}x_{i+(-r)}\bigg)\\
    &\;=\;\sum_{r=1}^{\lceil\frac{n}{2}\rceil-1}\bigg(\frac{\theta_{(k-1)r}(0)}{\theta_{-r}(0)\theta_{kr}(0)}x_{i-r}x_{i+r}+\frac{-e(-\frac{(k-1)r}{n})\theta_{(k-1)r}(0)}{(-e(\frac{r}{n}))\theta_{-r}(0)(-e(-\frac{kr}{n}))\theta_{kr}(0)}x_{i+r}x_{i-r}\bigg)\\
    &\;=\;\sum_{r=1}^{\lceil\frac{n}{2}\rceil-1}\frac{\theta_{(k-1)r}(0)}{\theta_{-r}(0)\theta_{kr}(0)}[x_{i-r},x_{i+r}].
  \end{align*}
  
  \cref{item.prop.qnk.poly.zero} This is immediate from \cref{item.prop.qnk.poly.ij} and \cref{item.prop.qnk.poly.ii}.
\end{proof}

\subsection{$\rel_{n,k}(E,\tau)$ and  $Q_{n,k}(E,\tau)$ when $\tau \in E[n]$}

\begin{corollary}\label{cor.qnk.tors}
  If $\zeta\in E[n]$, then $Q_{n,k}(E,\zeta)$ is the twist of the polynomial ring $\bbC[x_{0},\ldots,x_{n-1}]$ by the automorphism $\sigma^{k+1}$ where $\sigma$ is an arbitrary element of $\psi^{-1}(\zeta)\subseteq H_{n}$ and $\psi$ is the homomorphism, in \cref{eq:homom.psi:H_n.to.Aut}.
\end{corollary}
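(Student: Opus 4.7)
The plan is to deduce this corollary directly from \cref{thm.qnk.tw} and \cref{prop.qnk.poly}. Setting $\tau=0$ in \cref{thm.qnk.tw} gives
\begin{equation*}
Q_{n,k}(E,\psi(\sigma))\;=\;Q_{n,k}(E,0)^{\sigma^{k+1}}
\end{equation*}
for every $\sigma\in H_{n}$, and by \cref{prop.qnk.poly}\cref{item.prop.qnk.poly.zero} the base algebra is the polynomial ring $\CC[x_{0},\ldots,x_{n-1}]$. Since $\psi$ induces an isomorphism $H_{n}/\langle\epsilon\rangle\to E[n]$, given $\zeta\in E[n]$ the preimage $\psi^{-1}(\zeta)$ is non-empty, so any $\sigma\in\psi^{-1}(\zeta)$ yields $Q_{n,k}(E,\zeta)=\CC[x_{0},\ldots,x_{n-1}]^{\sigma^{k+1}}$.

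The only thing that still needs a remark is the statement that $\sigma\in\psi^{-1}(\zeta)$ may be chosen \emph{arbitrarily}, i.e.\ that the twist $\sigma^{k+1}$ does not depend (as an action on the polynomial ring) on the choice of representative modulo $\langle\epsilon\rangle$. I would note that any two elements of $\psi^{-1}(\zeta)$ differ by a power of $\epsilon$, and $\epsilon$ acts on $V$ (hence on $\CC[x_{0},\ldots,x_{n-1}]$) as the scalar $\omega=e(\tfrac{1}{n})$ by \cref{prop.qnk.act}. A scalar automorphism of a graded algebra induces the trivial twist (up to canonical isomorphism), since multiplying $a\in A_{j}$ by $\omega^{j}$ and using the twisted product $a*b=\phi^{m}(a)b$ only rescales generators. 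Hence replacing $\sigma$ by $\sigma\epsilon^{s}$ gives the same twisted algebra on the nose (after the obvious identification).

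No step here is genuinely hard: everything follows by specializing \cref{thm.qnk.tw} at $\tau=0$ and invoking \cref{prop.qnk.poly}. The only tiny point of care is the independence-of-$\sigma$ remark above, which is where the exponent $k+1$ interacts with the central element $\epsilon$.
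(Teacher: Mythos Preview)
Your proposal is correct and follows exactly the same approach as the paper: the paper's proof is a single sentence invoking \cref{thm.qnk.tw} and \cref{prop.qnk.poly}. Your additional remark about independence of the choice of $\sigma$ is a welcome clarification; the paper handles this only implicitly, via the parenthetical comment in the proof of \cref{thm.qnk.tw} that ``the twist by $\epsilon$ does not change the algebra.''
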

\begin{proof} This is a consequence of \cref{thm.qnk.tw} and \cref{prop.qnk.poly}.
\end{proof}

\begin{corollary}\label{cor.qnk.tors.2}
For all $k$ and all $\tau \in \frac{1}{n}\Lambda$, $\dim \rel_{n,k}(E,\tau) = \binom{n}{2}$.
\end{corollary}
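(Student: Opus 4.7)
The plan is to derive this immediately from \cref{cor.qnk.tors} together with the observation that the twisting operation preserves Hilbert series, so the dimension of the space of quadratic relations is a twist invariant.

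First I would invoke \cref{cor.qnk.tors}: for $\tau \in \frac{1}{n}\Lambda$, the image of $\tau$ in $E$ lies in $E[n]$, so $Q_{n,k}(E,\tau)$ is a twist $\CC[x_0,\ldots,x_{n-1}]^{\sigma^{k+1}}$ of the polynomial ring by a degree-preserving automorphism. Using the presentation proved at the start of \cref{subsec.twists}, the twist of $TV/\fa$ is isomorphic to $TV/\phi'(\fa)$ via the identity map on $V$, where $\phi'$ is the degree-preserving linear automorphism of $TV$ induced by $\phi$. In particular, $\phi'$ restricts to a linear isomorphism from the degree-$2$ component of $\fa$ onto the degree-$2$ component of $\phi'(\fa)$, so the two spaces of quadratic relations have the same dimension.

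Next I would apply this to the polynomial ring $\CC[x_0,\ldots,x_{n-1}] = TV/\fa$, whose degree-$2$ relations are spanned by the commutators $[x_i,x_j]$, $0\le i<j\le n-1$, giving $\dim \fa_2 = \binom{n}{2}$. Combining this with the previous paragraph and the fact that $Q_{n,k}(E,\tau) = TV/(\rel_{n,k}(E,\tau))$ by \cref{def.Qnk}, we obtain
\begin{equation*}
\dim \rel_{n,k}(E,\tau) \;=\; \dim \fa_2 \;=\; \binom{n}{2}.
\end{equation*}

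There is no real obstacle here; the work has already been done in \cref{prop.qnk.poly} (to identify $Q_{n,k}(E,0)$ with the polynomial ring) and in \cref{thm.qnk.tw} (to present $Q_{n,k}(E,\tau)$ as a twist of $Q_{n,k}(E,0)$ for $\tau \in E[n]$). The only subtlety worth flagging is the conceptual check that the \emph{extended} relation space $\rel_{n,k}(E,\tau)$ from \cref{def.Qnk}, defined via the morphism $\tau\mapsto L_{ij}(\tau)$ on $E$, is indeed the one that realizes $Q_{n,k}(E,\tau)$ as the claimed twist; but this is exactly the content of \cref{lem.tw.L}, which was used to prove \cref{thm.qnk.tw}.
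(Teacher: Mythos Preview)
Your proof is correct and follows the same route the paper implicitly takes: the paper states \cref{cor.qnk.tors.2} without proof immediately after \cref{cor.qnk.tors}, treating it as an obvious consequence. Your explicit unpacking—via the lemma in \cref{subsec.twists} that identifies $(TV/\fa)^\phi$ with $TV/\phi'(\fa)$, together with \cref{lem.tw.L} to verify that $\rel_{n,k}(E,\tau)$ is literally $(1\otimes\phi)(\rel_{n,k}(E,0))$ for the relevant $\phi$—is exactly what is intended, and your flagged subtlety is the right point to check.
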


\subsection{$Q_{n,n-1}(E,\tau)$ is a polynomial ring for all $\tau$}
In \Cref{prop.n.minus.one} we apply \cref{prop.qnk.poly,pr:samerk} to prove the assertions in \cite[\S1.2, Rmk.~1]{FO89} and \cite[\S 3]{Od-survey} 
that $Q_{n,n-1}(E,\tau)$ is a polynomial ring in $n$ variables for all $\tau$.

\begin{proposition}\label{pr:samerk}
For all $\tau \in \CC$, $\rel_{n,k}(E,\tau)$ and $\rel_{n,n-k}(E,\tau)$ have the same dimension.
\end{proposition}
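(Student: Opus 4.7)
The plan is to exploit the Heisenberg action on $\rel_{n,k}(E,\tau)$ together with an elementary linear-algebra fact (rank is preserved under transpose). First I would dispose of the torsion case: if the image of $\tau$ in $E$ lies in $E[n]$, then \cref{cor.qnk.tors.2} gives $\dim\rel_{n,k}(E,\tau)=\binom{n}{2}=\dim\rel_{n,n-k}(E,\tau)$. So from now on I may assume $\tau\in\CC-\frac{1}{n}\Lambda$, in which case the $r_{ij}(\tau)$'s are defined by \cref{the-relns} and span $\rel_{n,k}(E,\tau)$.

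Using the action from \cref{prop.qnk.act}, the element $S\in H_n$ acts on $V\otimes V$ diagonally by $x_\alpha\otimes x_\beta\mapsto \omega^{\alpha+\beta}x_\alpha\otimes x_\beta$, giving an eigenspace decomposition $V\otimes V=\bigoplus_{c\in\ZZ_n}W_c$ with $W_c=\operatorname{span}\{x_\alpha\otimes x_\beta\mid \alpha+\beta=c\}$ of dimension $n$. The computation $S\cdot r_{ij}=\omega^{i+j}r_{ij}$ in the proof of \cref{prop.qnk.act} places $r_{ij}(\tau)$ in $W_{i+j}$, so
\begin{equation*}
\rel_{n,k}(E,\tau)\;=\;\bigoplus_{c\in\ZZ_n}\big(\rel_{n,k}(E,\tau)\cap W_c\big),\qquad \rel_{n,k}(E,\tau)\cap W_c\;=\;\operatorname{span}\{r_{i,c-i}(\tau)\mid i\in\ZZ_n\}.
\end{equation*}
Expanding $r^k_{i,c-i}(\tau)$ in the basis $\{x_{c-s}x_s\mid s\in\ZZ_n\}$ of $W_c$ (substitute $r=s-i$ in \cref{the-relns}) yields $r^k_{i,c-i}(\tau)=\sum_{s}M^k_c(\tau)_{is}\,x_{c-s}x_s$ where
\begin{equation*}
M^k_c(\tau)_{is}\;=\;\frac{\theta_{c-(k+1)i+(k-1)s}(0)}{\theta_{c-i-s}(-\tau)\,\theta_{k(s-i)}(\tau)},
\end{equation*}
and therefore $\dim(\rel_{n,k}(E,\tau)\cap W_c)=\operatorname{rank} M^k_c(\tau)$.

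The key observation is then a direct substitution: since $n-k\equiv -k$, $(n-k)+1\equiv-(k-1)$, and $(n-k)-1\equiv-(k+1)\pmod n$,
\begin{equation*}
M^{n-k}_c(\tau)_{is}\;=\;\frac{\theta_{c+(k-1)i-(k+1)s}(0)}{\theta_{c-i-s}(-\tau)\,\theta_{-k(s-i)}(\tau)}\;=\;\frac{\theta_{c-(k+1)s+(k-1)i}(0)}{\theta_{c-s-i}(-\tau)\,\theta_{k(i-s)}(\tau)}\;=\;M^k_c(\tau)_{si}.
\end{equation*}
Thus $M^{n-k}_c(\tau)=M^k_c(\tau)^{T}$, and transposition preserves rank, so $\dim(\rel_{n,n-k}(E,\tau)\cap W_c)=\dim(\rel_{n,k}(E,\tau)\cap W_c)$ for every $c\in\ZZ_n$. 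Summing over $c$ gives the desired equality $\dim\rel_{n,n-k}(E,\tau)=\dim\rel_{n,k}(E,\tau)$.

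No deep theta-function identity is required; the only subtle point is the index bookkeeping required to verify the transpose identity, and that is the main obstacle: making sure the substitutions in $\ZZ_n$ line up as claimed in both the theta-index in the numerator and the two factors in the denominator.
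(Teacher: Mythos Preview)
Your proof is correct and rests on the same core idea as the paper's: handle $\tau\in\frac{1}{n}\Lambda$ separately, decompose $\rel_{n,k}(E,\tau)$ into $S$-eigenspaces $W_c$, and observe that passing from $k$ to $n-k$ transposes the coefficient matrix on each block. Your execution is, however, more direct than the paper's. The paper first passes through De Laet's relations $R_{ij}$ and $R'_{ij}$ from \cref{prop.new.rel}, which forces a case split on the parity of $n$ and, in the even case, yields only a shifted transpose $C_{r,i}\leftrightarrow C_{i-1,r+1}$ (still rank-preserving, but requiring an extra word of justification). By working with the original $r_{ij}$'s and the substitution $s=i+r$, you obtain the exact identity $M^{n-k}_c(\tau)=M^k_c(\tau)^T$ uniformly for all $n$, so \cref{prop.new.rel} is not needed at all. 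The trade-off is minimal: the paper's detour buys nothing extra here, so your route is simply shorter.
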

\begin{proof}
This is true when $\tau \in \frac{1}{n}\Lambda$ (\Cref{cor.qnk.tors.2}) so we assume that $\tau \in \CC-\frac{1}{n}\Lambda$. 
Now \Cref{prop.new.rel} applies: the relation spaces are the spans of the $R_{ij}$ and $R'_{ij}$ described in that result.

Assume $n$ is odd. For a fixed $j\in \ZZ_n$, the coefficients  in \Cref{eq:rij} are the matrix entries for the linear operator
$T_j$ on $\operatorname{span}\{x_a x_b \; | \;  a+b=2j\in \ZZ_n\}\subseteq V\otimes V$, with respect to the basis $\{ x_{j-i}x_{j+i} \; | \; i\in \ZZ_n\}$, given by the formula
  \begin{equation*}
T_j(x_{j-i}x_{j+i}) \; :=\;  
    \sum_{r\in \ZZ_n} e\left(\tfrac{r}{n}\right)  \frac{\theta_{-(k+1)i+(k-1)r}(0)}{\theta_{r+i}(\tau)\theta_{k(r-i)}(\tau)}x_{j-r}x_{j+r}.
  \end{equation*}
The dimension of $\rel_{n,k}(E,\tau)$ is
  \begin{equation*}
    \sum_{j\in \ZZ_n}\rank T_j, 
  \end{equation*}
  so we will be done once we show that switching between $k$ and $n-k$ does not alter the ranks of the operators $T_j$. To see this, observe that once the $e\left(\tfrac{r}{n}\right)$ factors (which only scale the rows of the matrix) have been removed, the left-over matrix with respective $(r,i)$-entries
  \begin{equation*}
    \frac{\theta_{-(k+1)i+(k-1)r}(0)}{\theta_{r+i}(\tau)\theta_{k(r-i)}(\tau)}
  \end{equation*}
  is simply transposed by the passage from $k$ to $-k$.

  The argument is similar for even $n$, the only difference being that for the coefficients
  \begin{equation*}
    C_{r,i} \; := \; \frac{\theta_{k-(k+1)i+(k-1)r}(0)}{\theta_{r+i}(\tau)\theta_{k(r-i+1)}(\tau)}
  \end{equation*}
  in \Cref{eq:r'ij} (again, after eliminating the exponential factors) the transformation $k \leftrightarrow  -k\in \ZZ_n$ translates to
$C_{r,i}\leftrightarrow C_{i-1,r+1}$. 
  Once more, this does not affect the rank of the matrix with entries $C_{r,i}$.
\end{proof} 

\begin{proposition}\label{prop.n.minus.one}
For all $\tau \in \CC$, $Q_{n,n-1}(E,\tau)=\CC[x_0,\ldots,x_{n-1}]$. 
\end{proposition}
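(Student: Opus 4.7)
The plan is to identify $\rel_{n,n-1}(E,\tau)$ with $\Alt^{2}V$, so that $Q_{n,n-1}(E,\tau)=TV/(\Alt^{2}V)=\CC[x_{0},\ldots,x_{n-1}]$. I proceed in three steps.

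First, I would show that for every $\tau\in\CC-\frac{1}{n}\Lambda$ each defining relation $r_{ij}(\tau)$ lies in $\Alt^{2}V$. Setting $k=n-1$ in \cref{the-relns} gives
\begin{equation*}
r_{ij}(\tau)\;=\;\sum_{r\in\ZZ_{n}}c_{r}\,x_{j-r}\otimes x_{i+r},\qquad c_{r}\;:=\;\frac{\theta_{j-i-2r}(0)}{\theta_{j-i-r}(-\tau)\,\theta_{-r}(\tau)}.
\end{equation*}
The coefficient of $x_{i+r}\otimes x_{j-r}$ in $r_{ij}(\tau)$ is $c_{j-i-r}$, so anti-symmetry under the swap of tensor factors amounts to verifying $c_{j-i-r}=-c_{r}$. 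This follows from three applications of \cref{prop.official.theta.basis}\cref{item.official.theta.basis.minus}: at $z=0$ to convert $\theta_{-(j-i-2r)}(0)$ into a multiple of $\theta_{j-i-2r}(0)$, and at $z=\tau$ applied to the two denominator factors $\theta_{r}(-\tau)$ and $\theta_{r-(j-i)}(\tau)$ to convert them into multiples of $\theta_{-r}(\tau)$ and $\theta_{j-i-r}(-\tau)$ respectively. The three $e(\cdot)$ phase factors combine to $-1$ cleanly, yielding $c_{j-i-r}=-c_{r}$.

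Second, I would extend the inclusion $\rel_{n,n-1}(E,\tau)\subseteq\Alt^{2}V$ to all $\tau\in\CC$ by continuity. Each $L_{ij}\colon E\to\PP(V\otimes V)$ is a morphism of algebraic varieties by \cref{prop.ext.mor.rel}, and the condition that a line in $V\otimes V$ lie in the linear subspace $\Alt^{2}V$ cuts out a Zariski-closed subset of $\PP(V\otimes V)$. Since $L_{ij}(\tau)\subseteq\Alt^{2}V$ on the dense open subset $E-E[n]$ by Step~1, the same holds for all $\tau\in E$; hence $\rel_{n,n-1}(E,\tau)\subseteq\Alt^{2}V$ throughout $\CC$. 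Finally, I would close the argument by a dimension count: by \cref{pr:samerk} together with $\dim\rel_{n,1}(E,\tau)=\binom{n}{2}$ (which holds for all $\tau\in\CC$, as recorded after \cref{defn.rel.n1} and reconciled with the definition of \cref{def.Qnk} by \cref{prop.rel.n.one.vs.n.k}), we have $\dim\rel_{n,n-1}(E,\tau)=\binom{n}{2}=\dim\Alt^{2}V$ for every $\tau$. The inclusion from Step~2 is therefore an equality, so $Q_{n,n-1}(E,\tau)=TV/(\Alt^{2}V)=\CC[x_{0},\ldots,x_{n-1}]$.

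The only delicate point, and hence the main obstacle, is the phase-cancellation in Step~1: after substituting $\theta_{-(j-i-2r)}(0)=-e(-(j-i-2r)/n)\theta_{j-i-2r}(0)$, $\theta_{r}(-\tau)=-e(-n\tau+r/n)\theta_{-r}(\tau)$, and $\theta_{r-(j-i)}(\tau)=-e(n\tau-(j-i-r)/n)\theta_{j-i-r}(-\tau)$ into the expression for $c_{j-i-r}$, the $n\tau$ terms cancel and the remaining arguments of $e(\cdot)$ sum to $-(j-i-2r)/n+(j-i-2r)/n=0$, leaving the overall sign $-1$ from the three minus signs multiplied by the numerator's minus sign. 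Everything else is routine bookkeeping or a direct appeal to results already established in the paper.
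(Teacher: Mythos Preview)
Your proof is correct and follows the same overall strategy as the paper: establish the inclusion $\rel_{n,n-1}(E,\tau)\subseteq\Alt^{2}V$ and then invoke \cref{pr:samerk} together with $\dim\rel_{n,1}(E,\tau)=\binom{n}{2}$ to force equality. The difference lies in how the inclusion is verified. The paper passes to De~Laet's relations $R_{ij}$ (and $R'_{ij}$ when $n$ is even) from \cref{prop.new.rel}, splits into odd and even $n$, and checks anti-symmetry in each case; for $\tau\in\tfrac{1}{n}\Lambda$ it appeals to \cref{cor.qnk.tors}. You instead work directly with the original $r_{ij}(\tau)$, show $c_{j-i-r}=-c_{r}$ via three applications of \cref{prop.official.theta.basis}\cref{item.official.theta.basis.minus}, and handle the torsion points by the Zariski-closure argument through \cref{prop.ext.mor.rel}. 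Your route is more uniform---no parity split, no change of basis to the $R_{ij}$'s---at the cost of a slightly more delicate phase bookkeeping, which you carry out correctly (the three minus signs from the identity distribute as one in the numerator and two in the denominator, and the exponential arguments cancel exactly). Either approach is fine; yours is arguably the more economical of the two.
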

\begin{proof}
By \Cref{prop.FO,cor.qnk.tors}, $\dim \rel_{n,1}(E,\tau)= \dim \Alt^2 V$ for all $\tau \in \CC$. 
By \Cref{pr:samerk}, the same holds for $\rel_{n,n-1}(E,\tau)$. Thus, to prove the proposition is suffices to show that
  \begin{equation}\label{eq:isalt}
    \rel_{n,n-1}(E,\tau) \subseteq \Alt^2 V.
  \end{equation}
We will now do this.
  
  If $\tau \in \frac{1}{n}\Lambda$, then \cref{cor.qnk.tors} implies that $\rel_{n,n-1}(E,\tau)=\Alt^2 V$, so we assume that $\tau\in\CC-\frac{1}{n}\Lambda$ for the rest of the proof.

  Suppose $n$ is odd. The relations $R_{ij}$ in \cref{prop.new.rel} are
  $$
  R_{ij} \;=\;  \sum_{r\in \ZZ_n}  e\left(\tfrac{r}{n}\right) \frac{\theta_{-2r}(0)}{\theta_{i+r}(\tau)\theta_{i-r}(\tau)} \, x_{j-r}x_{j+r}.
  $$
  Since $\theta_{0}(0)=0$, the coefficient of $x_{j}^2$ in $R_{ij}$ is equal to $0$. The coefficient of $x_{j+r}x_{j-r}$ is
  \begin{align*}
    e\left(-\tfrac{r}{n}\right)   \frac{\theta_{2r}(0)}{\theta_{i-r}(\tau)\theta_{i+r}(\tau)}
    \; =\; - \, e\left(\tfrac{r}{n}\right)  \frac{\theta_{-2r}(0)}{\theta_{i+r}(\tau)\theta_{i-r}(\tau)}
  \end{align*}
  which is the negative of the coefficient of $x_{j-r}x_{j+r}$. 
  Hence  $R_{ij} \in \Alt^2 V$.

  Suppose $n$ is even.
  As in the odd case, the coefficient of $x_{j}^2$ in $R_{ij}$ is zero and so is the coefficient of
  $x^2_{\frac{n}{2}+j}$.
  The ``same'' computation shows that  $R_{ij} \in \Alt^2 V$. The coefficient of $x_{j-r}x_{j+r+1}$ in $R'_{ij}$ is
  $$
  e\left(\tfrac{r}{n}\right)   \frac{\theta_{-2r-1}(0)}{\theta_{i+r}(\tau)\theta_{i-r-1}(\tau)}
  $$
  and the coefficient of $x_{j+r+1}x_{j-r} = x_{j-(-r-1)}x_{j+(-r-1)+1}$ is
  \begin{align*}
    e\left(\tfrac{-r-1}{n}\right)    \frac{\theta_{-2(-r-1)-1}(0)}{\theta_{i+(-r-1)}(\tau)\theta_{i-(-r-1)-1}(\tau)}
    & \; =\;
      e\left(\tfrac{-r-1}{n}\right)    \frac{\theta_{2r+1}(0)}{\theta_{i-r-1}(\tau)\theta_{i+r}(\tau)}
    \\
    & \; =\;
      -\,e\left(\tfrac{-r-1}{n}\right)  \frac{e\left(\frac{2r+1}{n}\right)\theta_{-2r-1}(0)}{\theta_{i-r-1}(\tau)\theta_{i+r}(\tau)}\\
    & \; =\;
      -\,e\left(\tfrac{r}{n}\right) \frac{\theta_{-2r-1}(0)}{\theta_{i-r-1}(\tau)\theta_{i+r}(\tau)}\, .
  \end{align*}
  Hence  $R'_{ij} \in \Alt^2 V$. This concludes the proof of \Cref{eq:isalt} and therefore that of the proposition. 
\end{proof}

\subsection{The relations $\rel_{n,1}(E,\tau)$ and  the structure of $Q_{n,1}(E,\tau)$ when $\tau \in E[2]$}
Since $Q_{n,k}(E,0)$ is a polynomial ring for all $(n,k,E)$  one might expect that $Q_{n,k}(E,\tau)$
is only moderately non-commutative when $\tau$ is a 2-torsion point on $E$. Kevin De Laet proved a decisive result in this
direction when $k=1$: if $n$ is an odd prime and $\tau \in \frac{1}{2}\Lambda$, then
$Q_{n,1}(E,\tau)$ is a Clifford algebra  \cite{KDL14}. 
The first step towards that result is part \cref{item.KDL.odd} of the following observation.

\begin{proposition}
\label{lem.KDL}\leavevmode
\begin{enumerate}
  \item\label{item.KDL.odd}
  If $n$ is odd and $\tau \in \frac{1}{2}\Lambda-\Lambda$, then 
$\rel_{n,1}(E,\tau) \subseteq \operatorname{span}\{x_\a x_\b+x_\b x_\a \; |\; \a,\b \in \ZZ_n\}$.
  \item\label{item.KDL.even}
If $n$ is even and $\tau \in \frac{1}{2}\Lambda$, then $Q_{n,1}(E,\tau)$ is a polynomial ring.
\end{enumerate}
\end{proposition}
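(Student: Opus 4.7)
The plan is to handle the two parts separately, reducing each to a computation with the quasi-periodicity of the $\theta_\a$'s from \cref{prop.official.theta.basis}.

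For part (1) I first establish that for every $\tau\in\frac{1}{2}\Lambda-\Lambda$ there is a nonzero scalar $C$ (depending on $\tau$ but \emph{not} on $\alpha$) such that $\theta_{\alpha}(-\tau)=C\,\theta_{\alpha}(\tau)$ for all $\alpha\in\ZZ_{n}$. This is a short case analysis over the three non-trivial 2-torsion points: if $\tau\equiv\frac{1}{2}\pmod{\Lambda}$ then $C=1$ follows from $\theta_{\alpha}(z+1)=\theta_{\alpha}(z)$; if $\tau\equiv\frac{1}{2}\eta$ or $\tau\equiv\frac{1}{2}+\frac{1}{2}\eta$, then $C$ is read off from $\theta_{\alpha}(z+\eta)=-e(-nz)\theta_{\alpha}(z)$, whose multiplicative factor is $\alpha$-independent. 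I would also note that since $\gcd(2,n)=1$, such a $\tau$ lies outside $\frac{1}{n}\Lambda$, so the relations $r_{ij}(\tau)$ are well defined.

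With $C$ in hand I invoke the simplified form of the relations from \cref{ssect.k=1.relns}: $r_{ii}=0$ (because $\theta_{0}(0)=0$), and for $i\neq j$, up to the common nonzero scalar $\theta_{j-i}(0)$,
\begin{equation*}
	r_{ij}(\tau) \;=\; \sum_{r\in\ZZ_{n}}\frac{x_{j-r}\,x_{i+r}}{\theta_{j-i-r}(-\tau)\,\theta_{r}(\tau)}.
\end{equation*}
Under the involution $r\leftrightarrow r':=j-i-r$ on $\ZZ_{n}$, the monomial $x_{j-r}x_{i+r}$ is sent to $x_{i+r}x_{j-r}$. After substituting $\theta_{j-i-r}(-\tau)=C\,\theta_{j-i-r}(\tau)$, the coefficient of $x_{j-r}x_{i+r}$ becomes $\frac{1}{C\,\theta_{j-i-r}(\tau)\,\theta_{r}(\tau)}$, which is visibly symmetric in $r$ and $r'$. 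Therefore $r_{ij}\in\Sym^{2}V=\operatorname{span}\{x_{\a}x_{\b}+x_{\b}x_{\a}\}$, proving (1).

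For part (2), since $n$ is even we have $\tfrac{1}{2}\Lambda\subseteq\tfrac{1}{n}\Lambda$, so every $\tau\in\tfrac{1}{2}\Lambda$ is an $n$-torsion point. Writing $\tau=\frac{a}{n}+\frac{b}{n}\eta$ with $a,b\in\{0,\frac{n}{2}\}$, and using $k=k'=1$, \cref{thm.qnk.tw} (together with \cref{cor.qnk.tors}) identifies
\begin{equation*}
	Q_{n,1}(E,\tau) \;=\; \CC[x_{0},\ldots,x_{n-1}]^{T^{-2b}S^{-2a}}.
\end{equation*}
Since $2a,2b\in\{0,n\}$ and $S^{n}=T^{n}=1$ in $H_{n}$, the twisting automorphism is the identity, so $Q_{n,1}(E,\tau)$ is the untwisted polynomial ring.

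The only piece of bookkeeping likely to cause friction in part (1) is the fixed point of the involution $r\mapsto j-i-r$: it exists uniquely in $\ZZ_n$ precisely because $n$ is odd, and it corresponds to a square monomial $x_{\gamma}^{2}$ which automatically lies in $\Sym^{2}V$, so the pairing argument accounts for every summand of $r_{ij}$. Once this is noted, the argument is a direct substitution, and part (2) is purely a matter of reading off that the twisting exponents vanish modulo $n$.
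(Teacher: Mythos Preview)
Your proof is correct and follows essentially the same approach as the paper. For part~(1), your case-by-case verification that $\theta_\alpha(-\tau)=C\theta_\alpha(\tau)$ with $C$ independent of $\alpha$ is exactly equivalent to the paper's observation that $\theta_{a}/\theta_{b}$ descends to a meromorphic function on $E$ (both rest on the $\alpha$-independence of the quasi-periodicity factors), and your part~(2) matches the paper's argument verbatim.
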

\begin{proof}
\cref{item.KDL.odd}
The hypothesis ensures that $\tau \notin \frac{1}{n}\Lambda$. Hence, by \cref{ssect.k=1.relns}, 
$Q_{n,1}(E,\tau)$ is defined by the relations 
\begin{equation}
\label{eq:k=1.relns}
\sum_{r \in \ZZ_n}   \frac{    x_{j-r}x_{i+r}  } {  \theta_{j-i-r}(-\tau) \theta_r(\tau)  } \;=\; 0, \qquad  i \ne j.
\end{equation}

Let $\l \in \L$ be such that $-\tau=\tau+\lambda$.

Fix $\a,\b \in \ZZ_n$.
The word $x_\a x_\b$ appears in the left-hand side of  \cref{eq:k=1.relns} if and only if 
there is an $r \in \ZZ_n$ such that $j-r=\a$ and $i+r=\b$, i.e., if and only if
$j-\a=\b-i$; i.e., if and only if $\a+\b=i+j$. Thus  $x_\a x_\b$ appears in the left-hand side of  
\cref{eq:k=1.relns} if and only if $x_\b x_\a $ does.

For the rest of the proof assume $\a+\b=i+j$. 
Let $r,r' \in \ZZ_n$ be such that $x_\a x_\b = x_{j-r}x_{i+r}$ and $x_\b x_\a = x_{j-r'}x_{i+r'}$; 
then $r=j-\a$ and  $r'=j-\b$, so $r+r'=j-i$. 
To prove the lemma it suffices to show that the coefficients of 
$x_{j-r} x_{i+r}$ and $x_{j-r'}x_{i+r'}$  in \cref{eq:k=1.relns} are the same. 

The reciprocals of those  coefficients are 
$\theta_{j-i-r}(-\tau) \theta_{r}(\tau)$ and $\theta_{j-i-r'}(-\tau) \theta_{r'}(\tau)$, respectively. 
But  $\theta_{j-i-r'}(-\tau) \theta_{r'}(\tau)=\theta_{r}(-\tau) \theta_{j-i-r}(\tau)$, so the coefficients are the same if and only if 
$$
\theta_{j-i-r}(\tau+\lambda) \theta_{r}(\tau) 
= 
\theta_{r}(\tau+\lambda) \theta_{j-i-r}(\tau)
$$
i.e., if and only if 
$$
\frac{ \theta_{j-i-r}(\tau+\lambda) } 
        { \theta_{r}(\tau+\lambda)  } 
=
\frac{ \theta_{j-i-r}(\tau) } 
        { \theta_{r}(\tau)  } .
$$
These are equal: since $\theta_{j-i-r}$ and $\theta_{r}$ belong to $\Theta_n(\Lambda)$,
$$
\frac{ \theta_{j-i-r} } { \theta_{r}} 
$$
is a well-defined (meromorphic) function on $E$. 

\cref{item.KDL.even}
There are integers $a$ and $b$ such that $\tau=\frac{a}{n}+\frac{b}{n}\eta$ and $2a=2b=0$ in $\ZZ_n$. In particular, $(k+1)a=(k'+1)b=0$ so,
as  noted in \cref{ssect.more.isoms}, $Q_{n,1}(E,\tau)$ is a polynomial ring.
\end{proof}

\appendix

\section{Quasi-periodic functions}
\label{sect.appx}

A function $f$ satisfying the hypotheses of the following lemma is called a {\sf theta function of order $c\eta_1-a\eta_2$} 
with respect to $\Lambda$. Thus a theta function of order $r$ has exactly $r$ zeros (counted with multiplicity) in
every fundamental parallelogram  for $\Lambda$.

\begin{lemma} 
\label{lem.theta.fns}
Assume $\Lambda =\ZZ\eta_1 + \ZZ\eta_2$ is a lattice in $\CC$ such that $\Im(\eta_2/\eta_1)>0$, 
and suppose $f$ is a non-constant holomorphic function on $\CC$.
If there are constants $a,b,c,d \in \CC$ such that 
\begin{align*}
f(z+\eta_1) & \;=\; e^{-2\pi i (az+b)}f(z) \qquad \hbox{and} \qquad
\\
f(z+\eta_2) & \;=\; e^{-2\pi i (cz+d)}f(z), 
\end{align*}
then
\begin{enumerate}
  \item 
 $c \eta_1 - a\eta_2 \in \ZZ_{\geq 0}$, and
  \item 
  $f$ has $c \eta_1 - a\eta_2$ zeros (counted with multiplicity) in every fundamental parallelogram for $\Lambda$, and
  \item 
the sum of those zeros is ${{1}\over{2}}(c \eta_1^2 - a\eta_2^2 ) + (c-a)\eta_1\eta_2 + b\eta_2 - d\eta_1 $ modulo $\Lambda$. 
\end{enumerate}
\end{lemma}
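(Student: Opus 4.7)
The plan is to prove all three statements simultaneously via the argument principle applied to a fundamental parallelogram for $\Lambda$, leveraging the two quasi-periodicity identities to convert the contour integrals into elementary expressions in $a,b,c,d,\eta_1,\eta_2$.

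First I would choose a base point $z_0\in\CC$ so that $f$ has no zeros on the boundary of the parallelogram $P$ with vertices $z_0,\ z_0+\eta_1,\ z_0+\eta_1+\eta_2,\ z_0+\eta_2$; this is possible because $f$ has only countably many zeros and we can translate $z_0$ by an arbitrarily small amount. Differentiating logarithmically, the hypotheses give
\begin{equation*}
\tfrac{f'(z+\eta_1)}{f(z+\eta_1)}-\tfrac{f'(z)}{f(z)}\;=\;-2\pi i a,\qquad
\tfrac{f'(z+\eta_2)}{f(z+\eta_2)}-\tfrac{f'(z)}{f(z)}\;=\;-2\pi i c,
\end{equation*}
valid wherever $f$ is non-zero. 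For parts (i) and (ii), the argument principle gives the number of zeros inside $P$ as $N=\frac{1}{2\pi i}\oint_{\partial P}\frac{f'(z)}{f(z)}\,dz$. Pairing the two horizontal sides (parametrizing the upper one by $z\mapsto z+\eta_2$) and using the second identity above yields a contribution of $c\eta_1$; pairing the two vertical sides and using the first identity yields $-a\eta_2$. Thus $N=c\eta_1-a\eta_2$, which must therefore be a nonnegative integer, proving (i) and (ii).

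For (iii), I would compute the sum of zeros (with multiplicity) inside $P$ as $\Sigma=\frac{1}{2\pi i}\oint_{\partial P}z\tfrac{f'(z)}{f(z)}\,dz$. Pairing opposite sides by the same substitutions $z\mapsto z+\eta_1$ and $z\mapsto z+\eta_2$, each pair now leaves two residual terms: one involving $\int\tfrac{f'(z)}{f(z)}\,dz$ along a single edge, and an elementary polynomial integral coming from the factor $z$. The single-edge logarithmic integrals equal $\log f(z_0+\eta_j)-\log f(z_0)$ along the chosen branch, which by the quasi-periodicity identities equals $-2\pi i(az_0+b)+2\pi i m_1$ and $-2\pi i(cz_0+d)+2\pi i m_2$ respectively, for some integers $m_1,m_2$. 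Assembling everything, the $z_0$-dependent terms cancel (as they must, since the left-hand side $\Sigma$ does not depend on $z_0$), and after dividing by $2\pi i$ the remaining contribution is
\begin{equation*}
\Sigma\;=\;\tfrac{1}{2}(c\eta_1^2-a\eta_2^2)+(c-a)\eta_1\eta_2+b\eta_2-d\eta_1+m_2\eta_1-m_1\eta_2.
\end{equation*}
Since $m_2\eta_1-m_1\eta_2\in\Lambda$, this gives the claimed congruence modulo $\Lambda$.

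The main obstacle is bookkeeping rather than conceptual: keeping the signs of the four oriented edges straight and handling the branch of $\log f$ carefully so that the integer ambiguities $m_1,m_2$ from the logarithm are correctly absorbed into the lattice $\Lambda$. A minor subsidiary point worth checking is that the $z_0$-dependent terms on opposite edges really do cancel in the computation for $\Sigma$, which is a useful internal consistency check and confirms that the answer is a well-defined element of $\CC/\Lambda$.
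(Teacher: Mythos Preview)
Your proposal is correct and follows essentially the same approach as the paper: choose a fundamental parallelogram avoiding zeros on the boundary, apply the argument principle for the zero count, and compute $\frac{1}{2\pi i}\oint z\,\tfrac{f'}{f}\,dz$ for the sum, in each case pairing opposite edges via the quasi-periodicity identities. The only minor difference is that you are more explicit about the integer ambiguities $m_1,m_2$ arising from the branch of $\log f$, whereas the paper simply writes $\log\bigl(f(r+\eta_2)/f(r)\bigr)=-2\pi i(cr+d)$ and appends ``modulo $\Lambda$'' at the end; your version is arguably cleaner on this point. One small caveat: your parenthetical ``as they must, since the left-hand side $\Sigma$ does not depend on $z_0$'' is not quite right, since $\Sigma$ itself does depend on the choice of parallelogram---only its class modulo $\Lambda$ is independent of $z_0$---but this does not affect the argument.
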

\begin{proof} 
Since $f$ is holomorphic, and not identically zero, it has finitely many zeros in every compact region of $\CC$. 
Hence we can, and do, choose a fundamental parallelogram for $\Lambda$ such that no zeros of $f$ lie on its boundary. 
Because $\Im(\eta_2/\eta_1)>0$, the vertices of such a parallelogram can be labeled $A,B,C,D$ in a counterclockwise direction with 
$A=r$, $B=r+\eta_1$, $C=r+\eta_1+\eta_2$, and $D=r+\eta_2$.  

The number of zeros of $f$ in the parallelogram $ABCD$ is ${{1}\over {2\pi i}} \int_{ABCD}\frac{f'(z)}{f(z)}  \, dz.$
It follows from the translation properties of $f$ that
$$
{{f^\prime(z+\eta_1)}\over{f(z+\eta_1)}} \;=\; {{f^\prime(z)}\over{f(z)}} - 2\pi i a
$$
 and 
$$
{{f^\prime(z+\eta_2)}\over{f(z+\eta_2)}} \;=\; {{f^\prime(z)}\over{f(z)}} - 2\pi i c.
$$
 Hence
\begin{align*}
\int_{AB} {{f^\prime(z)}\over{f(z)}}\, dz +  \int_{CD} {{f^\prime(z)}\over{f(z)}}\, dz  
& \;=\; 
\int_r^{r+\eta_1} \left( {{f^\prime(z)}\over{f(z)}} - {{f^\prime(z+\eta_2)}\over{f(z+\eta_2)}} \right) dz
\\
& \;=\;  2\pi i c \eta_1 \phantom{\Bigg)}
\end{align*}
and
\begin{align*}
\int_{AD} {{f^\prime(z)}\over{f(z)}}\, dz +  \int_{CB} {{f^\prime(z)}\over{f}(z)}\, dz   
& \;=\; 
\int_r^{r+\eta_2} \left( {{f^\prime(z)}\over{f(z)}} - {{f^\prime(z+\eta_1)}\over{f(z+\eta_1)}} \right) dz
\\
& \;=\; 2\pi i a \eta_2.     \phantom{\Bigg)}
\end{align*} 
The number of zeros of $f$ in the parallelogram $ABCD$ is therefore $c\eta_1- a\eta_2$.

The sum of these zeros is ${{1}\over{2\pi i}} \int_{ABCD}z{{f^\prime(z)} \over {f(z)}}dz.$ Now
\begin{align*}
\int_{DA} z {{f^\prime(z)}\over{f(z)}}\, dz + \int_{BC} z {{f^\prime(z)}\over{f(z)}}\, dz 
& \; = \;   \int_r^{r+\eta_2} \left( - z {{f^\prime(z)}\over{f(z)}}  +    (z+\eta_1) {{f^\prime(z+\eta_1)}\over{f(z+\eta_1)}} \right)dz 
\\
& \; = \;   \int_r^{r+\eta_2} \left( - z {{f^\prime(z)}\over{f(z)}}  +    (z+\eta_1) \left( {{f^\prime(z)}\over{f(z)}} -2\pi i a \right) \right)dz 
\cr
& \; =\;   \Big[  \eta_1 \log f(z) -2\pi i a\eta_1 z  -\pi i az^2  \Big] ^{r+\eta_2}_r  \phantom{\bigg)}
\cr
& \; =\;   \eta_1 \log\left( \tfrac{f(r+\eta_2)}{f(r)} \right) -2\pi i a\eta_1 \eta_2  -\pi i a (2r \eta_2 +\eta_2^2)  
\cr
& \; =\;  - 2\pi i (cr+d) \eta_1  -\pi i a (2\eta_1\eta_2 + 2r \eta_2 +\eta_2^2) 
\end{align*}
and, similarly, 
\begin{align*}
\int_{AB} z {{f^\prime(z)}\over{f(z)}}\, dz + \int_{CD} z {{f^\prime(z)}\over{f(z)}}\, dz 
&  \;= \;   2\pi i (ar+b) \eta_2  + \pi i c (2\eta_1\eta_2 + 2r \eta_1 +\eta_1^2). 
\end{align*}
Hence the sum of the zeros is ${{1}\over{2}}(c \eta_1^2 - a\eta_2^2 ) + (c-a)\eta_1\eta_2 + b\eta_2 - d\eta_1$ modulo $\Lambda$.
\end{proof}



\begin{thebibliography}{ATVdB91}

\bibitem[AS64]{handbook}
M.~Abramowitz and I.~A. Stegun, \emph{Handbook of mathematical functions with
  formulas, graphs, and mathematical tables}, National Bureau of Standards
  Applied Mathematics Series, vol.~55, For sale by the Superintendent of
  Documents, U.S. Government Printing Office, Washington, D.C., 1964.
  \MR{0167642}

\bibitem[ATVdB90]{ATV1}
M.~Artin, J.~Tate, and M.~Van~den Bergh, \emph{Some algebras associated to
  automorphisms of elliptic curves}, The {G}rothendieck {F}estschrift, {V}ol.\
  {I}, Progr. Math., vol.~86, Birkh\"auser Boston, Boston, MA, 1990,
  pp.~33--85. \MR{1086882 (92e:14002)}

\bibitem[ATVdB91]{ATV2}
\bysame, \emph{Modules over regular algebras of dimension {$3$}}, Invent. Math.
  \textbf{106} (1991), no.~2, 335--388. \MR{1128218 (93e:16055)}

\bibitem[AVdB90]{AV90}
M.~Artin and M.~Van~den Bergh, \emph{Twisted homogeneous coordinate rings}, J.
  Algebra \textbf{133} (1990), no.~2, 249--271. \MR{1067406 (91k:14003)}

\bibitem[CKS19]{CKS2}
A.~{Chirvasitu}, R.~{Kanda}, and S.~P. {Smith}, \emph{{The characteristic
  variety for Feigin and Odesskii's elliptic algebras}}, arXiv:1903.11798v4.

\bibitem[CKS20]{CKS4}
\bysame, \emph{{Elliptic R-matrices and Feigin and Odesskii's elliptic
  algebras}}, arXiv:2006.12283v1.

\bibitem[{De }14]{KDL14}
K.~{De Laet}, \emph{{Character series and Sklyanin algebras at points of order
  2}}, arXiv:1412.7001v2.

\bibitem[Fis10]{Fisher}
T.~Fisher, \emph{Pfaffian presentations of elliptic normal curves}, Trans.
  Amer. Math. Soc. \textbf{362} (2010), no.~5, 2525--2540. \MR{2584609}

\bibitem[FO89]{FO-Kiev}
B.~L. Feigin and A.~V. Odesskii, \emph{Sklyanin algebras associated with an
  elliptic curve}, Preprint deposited with Institute of Theoretical Physics of
  the Academy of Sciences of the Ukrainian SSR (1989), 33 pages.

\bibitem[FO98]{FO98}
\bysame, \emph{Vector bundles on an elliptic curve and {S}klyanin algebras},
  Topics in quantum groups and finite-type invariants, Amer. Math. Soc. Transl.
  Ser. 2, vol. 185, Amer. Math. Soc., Providence, RI, 1998, pp.~65--84.
  \MR{1736164 (2001f:14063)}

\bibitem[FO01]{FO2001}
\bysame, \emph{Functional realization of some elliptic {H}amiltonian structures
  and bosonization of the corresponding quantum algebras}, Integrable
  structures of exactly solvable two-dimensional models of quantum field theory
  ({K}iev, 2000), NATO Sci. Ser. II Math. Phys. Chem., vol.~35, Kluwer Acad.
  Publ., Dordrecht, 2001, pp.~109--122. \MR{1873567}

\bibitem[GH78]{GH78}
P.~Griffiths and J.~Harris, \emph{Principles of algebraic geometry},
  Wiley-Interscience [John Wiley \& Sons], New York, 1978, Pure and Applied
  Mathematics. \MR{507725}

\bibitem[Har77]{Hart}
R.~Hartshorne, \emph{Algebraic geometry}, Springer-Verlag, New York-Heidelberg,
  1977, Graduate Texts in Mathematics, No. 52. \MR{0463157 (57 \#3116)}

\bibitem[Has07]{Hassett}
B.~Hassett, \emph{Introduction to algebraic geometry}, Cambridge University
  Press, Cambridge, 2007. \MR{2324354}

\bibitem[HP18]{HP1}
Z.~Hua and A.~Polishchuk, \emph{Shifted {P}oisson structures and moduli spaces
  of complexes}, Adv. Math. \textbf{338} (2018), 991--1037. \MR{3861721}

\bibitem[LS93]{LS93}
T.~Levasseur and S.~P. Smith, \emph{Modules over the {$4$}-dimensional
  {S}klyanin algebra}, Bull. Soc. Math. France \textbf{121} (1993), no.~1,
  35--90. \MR{1207244 (94f:16054)}

\bibitem[Mum07]{Mum07}
D.~Mumford, \emph{Tata lectures on theta. {I}}, Modern Birkh\"auser Classics,
  Birkh\"auser Boston, Inc., Boston, MA, 2007, With the collaboration of C.
  Musili, M. Nori, E. Previato and M. Stillman, Reprint of the 1983 edition.
  \MR{2352717}

\bibitem[Ode92]{OF92}
A.~V. Odesski, \emph{Rational degeneration of elliptic quadratic algebras},
  Infinite analysis, {P}art {A}, {B} ({K}yoto, 1991), Adv. Ser. Math. Phys.,
  vol.~16, World Sci. Publ., River Edge, NJ, 1992, pp.~773--779.

\bibitem[Ode02]{Od-survey}
A.~V. Odesskii, \emph{Elliptic algebras}, Uspekhi Mat. Nauk \textbf{57} (2002),
  no.~6(348), 87--122. \MR{1991863}

\bibitem[OF89]{FO89}
A.~V. Odesskii and B.~L. Feigin, \emph{Sklyanin elliptic algebras},
  Funktsional. Anal. i Prilozhen. \textbf{23} (1989), no.~3, 45--54, 96.
  \MR{1026987 (91e:16037)}

\bibitem[OF93]{OF93}
\bysame, \emph{Constructions of elliptic {S}klyanin algebras and of quantum
  {$R$}-matrices}, Funktsional. Anal. i Prilozhen. \textbf{27} (1993), no.~1,
  37--45. \MR{1225909 (94m:17019)}

\bibitem[OF95]{OF95}
A.~V. Odesskii and B.~L. Feigin, \emph{Sklyanin's elliptic algebras. {T}he case
  of a point of finite order}, Funktsional. Anal. i Prilozhen. \textbf{29}
  (1995), no.~2, 9--21, 95. \MR{1340300}

\bibitem[OR08]{OR08}
A.~Odesskii and V.~Rubtsov, \emph{Integrable systems associated with elliptic
  algebras}, Quantum groups, IRMA Lect. Math. Theor. Phys., vol.~12, Eur. Math.
  Soc., Z{\"u}rich, 2008, pp.~81--105.

\bibitem[ORTP11a]{ORTP11a}
G.~Ortenzi, V.~Rubtsov, and S.~R. Tagne~Pelap, \emph{Integer solutions of
  integral inequalities and {$H$}-invariant {J}acobian {P}oisson structures},
  Adv. Math. Phys. (2011), Art. ID 252186, 18.

\bibitem[ORTP11b]{ORTP11b}
\bysame, \emph{On the {H}eisenberg invariance and the elliptic {P}oisson
  tensors}, Lett. Math. Phys. \textbf{96} (2011), no.~1-3, 263--284.

\bibitem[Pol98]{pl98}
A.~Polishchuk, \emph{Poisson structures and birational morphisms associated
  with bundles on elliptic curves}, Internat. Math. Res. Notices (1998),
  no.~13, 683--703. \MR{1636545}

\bibitem[Sal99]{saltman}
D.~J. Saltman, \emph{Lectures on division algebras}, CBMS Regional Conference
  Series in Mathematics, vol.~94, Published by American Mathematical Society,
  Providence, RI; on behalf of Conference Board of the Mathematical Sciences,
  Washington, DC, 1999. \MR{1692654}

\bibitem[Skl82]{Skl82}
E.~K. Sklyanin, \emph{Some algebraic structures connected with the
  {Y}ang-{B}axter equation}, Funktsional. Anal. i Prilozhen. \textbf{16}
  (1982), no.~4, 27--34, 96. \MR{684124 (84c:82004)}

\bibitem[SS92]{SS92}
S.~P. Smith and J.~T. Stafford, \emph{Regularity of the four-dimensional
  {S}klyanin algebra}, Compositio Math. \textbf{83} (1992), no.~3, 259--289.
  \MR{1175941 (93h:16037)}

\bibitem[ST94]{ST94}
S.~P. Smith and J.~T. Tate, \emph{The center of the {$3$}-dimensional and
  {$4$}-dimensional {S}klyanin algebras}, Proceedings of {C}onference on
  {A}lgebraic {G}eometry and {R}ing {T}heory in honor of {M}ichael {A}rtin,
  {P}art {I} ({A}ntwerp, 1992), vol.~8, 1994, pp.~19--63. \MR{1273835}

\bibitem[TVdB96]{TvdB96}
J.~T. Tate and M.~Van~den Bergh, \emph{Homological properties of {S}klyanin
  algebras}, Invent. Math. \textbf{124} (1996), no.~1-3, 619--647. \MR{1369430
  (98c:16057)}

\end{thebibliography}
\bibliographystyle{customamsalpha}

\def\cprime{$'$}
\providecommand{\bysame}{\leavevmode\hbox to3em{\hrulefill}\thinspace}
\providecommand{\MR}{\relax\ifhmode\unskip\space\fi MR }
\providecommand{\MRhref}[2]{%
  \href{http://www.ams.org/mathscinet-getitem?mr=#1}{#2}
}
\providecommand{\href}[2]{#2}

\end{document}